\DeclareMathOperator{\codim}{codim}
\DeclareMathOperator{\cx}{cx}
\DeclareMathOperator{\depth}{depth}
\DeclareMathOperator{\EI}{EI}
\DeclareMathOperator{\EP}{EP}
\DeclareMathOperator{\Ext}{Ext}
\DeclareMathOperator{\Hom}{Hom}
\DeclareMathOperator{\injdim}{injdim}
\DeclareMathOperator{\Mod}{mod}
\DeclareMathOperator{\projdim}{projdim}
\DeclareMathOperator{\rank}{rank}
\DeclareMathOperator{\Spec}{Spec}
\DeclareMathOperator{\Tor}{Tor}
\renewcommand{\ge}{\geqslant}
\renewcommand{\le}{\leqslant}
\newcommand{\fm}{\mathfrak{m}}
\newcommand{\fp}{\mathfrak{p}}
\newcommand{\fq}{\mathfrak{q}}
\newcommand{\lra}{\longrightarrow}
\newcommand{\oM}{\overline{M}}
\newcommand{\oN}{\overline{N}}
\newcommand{\oR}{\overline{R}}
\theoremstyle{plain}
\newtheorem{theorem}{Theorem}[section]
\newtheorem{lemma}[theorem]{Lemma}
\newtheorem{proposition}[theorem]{Proposition}
\newtheorem{corollary}[theorem]{Corollary}
\theoremstyle{definition}
\newtheorem{example}[theorem]{Example}
\newtheorem{hypothesis}[theorem]{Hypothesis}
\newtheorem{para}[theorem]{}
\theoremstyle{remark}
\newtheorem{remark}[theorem]{Remark}
\numberwithin{equation}{theorem}
\title[Vanishing of Exts and Tors]{Vanishing of (co)homology over deformations of Cohen-Macaulay local rings of minimal multiplicity}
\date{January 15, 2018}
\author[Dipankar Ghosh]{Dipankar Ghosh}
\address{Chennai Mathematical Institute, H1, SIPCOT IT Park, Siruseri, Kelambakkam, Chennai 603103, Tamil Nadu, India}
\email{dghosh@cmi.ac.in}
\thanks{Ghosh was supported by DST, Government of India under the DST-INSPIRE Faculty Scheme}
\author[Tony J. Puthenpurakal]{Tony J. Puthenpurakal}
\address{Department of Mathematics, Indian Institute of Technology Bombay, Powai, Mumbai 400076, India}
\email{tputhen@math.iitb.ac.in}
\subjclass[2010]{Primary 13D07; Secondary 13D02, 13H05, 13H10}
\keywords{Cohen-Macaulay; Gorenstein; Regular; Minimal multiplicity; Ulrich module; Syzygy; Ext; Tor}
\begin{document}


\begin{abstract}
	Let $ R $ be a $ d $-dimensional Cohen-Macaulay (CM) local ring of minimal multiplicity. Set $ S := R/({\bf f}) $, where $ {\bf f} := f_1,\ldots,f_c $ is an $ R $-regular sequence. Suppose $ M $ and $ N $ are maximal CM $ S $-modules. It is shown that if $ \Ext_S^i(M,N) = 0 $ for some $ (d+c+1) $ consecutive values of $ i \ge 2 $, then $ \Ext_S^i(M,N) = 0 $ for all $ i \ge 1 $. Moreover, if this holds true, then either $ \projdim_R(M) $ or $ \injdim_R(N) $ is finite. In addition, a counterpart of this result for Tor-modules is provided. Furthermore, we give a number of necessary and sufficient conditions for a CM local ring of minimal multiplicity to be regular or Gorenstein. These conditions are based on vanishing of certain Exts or Tors involving homomorphic images of syzygy modules of the residue field.
\end{abstract}

\maketitle

\section{Introduction }\label{sec:introduction}

Throughout this article, unless otherwise specified, all rings are assumed to be commutative Noetherian local rings, and all modules are assumed to be finitely generated. Let $(R,\fm,k)$ be a Noetherian local ring. A celebrated result by Auslander and  Lichtenbaum, \cite[Corollary~2.2]{Aus61} and \cite[Corollary~1]{Lic66}, is the following:

\begin{theorem}[Rigidity Theorem]\label{thm:rigidity}
 	Let $ R $ be a regular local ring. For $ R $-modules $ M $ and $ N $, if $ \Tor_i^R(M,N) = 0 $ for some $ i \ge 1 $, then $ \Tor_j^R(M,N) = 0 $ for all $ j \ge i $.
 \end{theorem}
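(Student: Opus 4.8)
The plan is to reduce everything to the classical change-of-rings machinery of Auslander and Lichtenbaum, exploiting that a regular local ring $R$ has finite global dimension, so that $\projdim_R M$ and $\projdim_R N$ are finite. A first, harmless reduction is to the single vanishing $i = 1$: if one knows that $\Tor_1^R(L,N) = 0$ forces $\Tor_j^R(L,N) = 0$ for all $j \ge 1$ (for every module $L$), then for arbitrary $M$ with $\Tor_i^R(M,N) = 0$ one applies this to the $(i-1)$st syzygy $L = \Omega^{i-1}_R M$ of a minimal free resolution, using the isomorphisms $\Tor_j^R(\Omega^{i-1}_R M, N) \cong \Tor_{j+i-1}^R(M,N)$ for $j \ge 1$. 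So from now on $M, N \ne 0$, $\Tor_1^R(M,N) = 0$, and the goal is to prove $\Tor_j^R(M,N) = 0$ for all $j \ge 1$.

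Next I induct on $d = \dim R$, assuming the theorem for all regular local rings of smaller dimension; the case $d = 0$ ($R$ a field) is vacuous. Suppose $d \ge 1$ and, for contradiction, that $\Tor_n^R(M,N) \ne 0$ for some $n \ge 2$; since $\projdim_R M < \infty$ there is a largest such $n$. For every prime $\fp \ne \fm$ the ring $R_\fp$ is regular local of dimension $< d$, and $\Tor_1^R(M,N)_\fp = 0$, so the inductive hypothesis over $R_\fp$ gives $\Tor_j^R(M,N)_\fp = 0$ for all $j \ge 1$. Thus every $\Tor_j^R(M,N)$ with $j \ge 1$ has finite length, so in particular $\fm$ lies in the support of both $M$ and $N$.

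The heart of the argument is a change of rings. After a faithfully flat extension I may assume the residue field is infinite, and then --- assuming for the moment that $\depth M \ge 1$ and $\depth N \ge 1$ --- I choose $x \in \fm \setminus \fm^2$ that is a nonzerodivisor on $R$, on $M$, on $N$, and on $M \otimes_R N$, so that $\bar R := R/(x)$ is again regular local, of dimension $d-1$. Reducing a free resolution $F_\bullet$ of $M$ modulo $x$ yields a free $\bar R$-resolution of $\oM := M/xM$ (using that $x$ is $M$-regular), whence $\Tor_i^{\bar R}(\oM, \oN) \cong \Tor_i^R(M, N/xN)$ for all $i$, where $\oN := N/xN$; combining this with the long exact $\Tor$-sequence attached to $0 \to N \xrightarrow{\,x\,} N \to N/xN \to 0$ and the hypothesis $\Tor_1^R(M,N) = 0$ gives $\Tor_1^{\bar R}(\oM, \oN) \cong (0 :_{M \otimes_R N} x) = 0$. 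The inductive hypothesis over $\bar R$ then forces $\Tor_j^{\bar R}(\oM, \oN) = 0$, that is $\Tor_j^R(M, N/xN) = 0$, for every $j \ge 1$; feeding this back into the same long exact sequence shows that multiplication by $x$ is an isomorphism on each $\Tor_j^R(M,N)$ with $j \ge 1$, so Nakayama forces $\Tor_j^R(M,N) = 0$, contradicting $\Tor_n^R(M,N) \ne 0$.

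The step I expect to be the real obstacle is securing the hypotheses on which this argument rests, and this is exactly where regularity must be used essentially: one needs $\depth M, \depth N \ge 1$ and the vanishing of the ``error term'' $(0 :_{M\otimes_R N} x)$, neither of which is automatic, and neither of which survives the naive remedy of passing to syzygies (a syzygy shift moves the indices of the positive Tors and thereby discards the offending $\Tor_n$). Handling the depth-zero situations --- where $M$ or $N$ has a free resolution of maximal length $d$ and admits no regular element --- is the crux. The classical way around this is Lichtenbaum's: after completing, reduce the ramified case to the unramified one by writing $R$ as $S/(g)$ with $S$ unramified regular local (of dimension $d+1$) and $g$ an $S$-regular element, and play the hypersurface change-of-rings long exact sequence relating $\Tor^S$ and $\Tor^R$ against Auslander's rigidity theorem for unramified regular local rings --- the latter being the genuine base case, itself proved by a parallel induction on dimension that kills a generic regular parameter.
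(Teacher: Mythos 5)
Your proposal does not actually prove the theorem; it reduces it to itself. The skeleton you set up (pass to a syzygy to reach $i=1$, induct on $\dim R$, localize at non-maximal primes to force the positive-degree Tors to have finite length, then cut by an element $x\in\fm\setminus\fm^2$ and use the change-of-rings isomorphism $\Tor_j^{R/(x)}(M/xM,N/xN)\cong\Tor_j^R(M,N/xN)$ plus Nakayama) is the standard opening move, but it grinds to a halt exactly where you say it does: the argument needs $x$ to be regular on $M$, on $N$, \emph{and} on $M\otimes_R N$, and in the critical situation these modules (in particular $M\otimes_R N$, whose torsion is precisely the ``error term'' $(0:_{M\otimes_R N}x)$) typically have depth zero, so no such $x$ exists; and replacing $M$ or $N$ by a syzygy shifts the window of Tors and discards the nonvanishing $\Tor_n$ you are trying to contradict. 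At that point you supply no argument: you invoke ``Auslander's rigidity theorem for unramified regular local rings'' together with ``Lichtenbaum's reduction of the ramified case.'' But those two results \emph{are} the statement being proved --- the theorem in question is exactly the Auslander--Lichtenbaum rigidity theorem, which the paper itself does not prove but quotes from \cite{Aus61} and \cite{Lic66}. So as a blind proof the proposal is circular (or, read charitably, a reduction of the general case to the complete unramified case, which is where all the genuine content lies). Your parenthetical description of that base case as ``a parallel induction on dimension that kills a generic regular parameter'' also understates it: Auslander's unramified case rests on reduction to the diagonal over power-series rings and a delicate analysis of torsion in $M\otimes_R N$ (the key point being that rigidity is proved together with statements controlling $\depth$ and the torsion submodule), not on a routine repetition of the cutting-down argument.

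To be fair to you, the paper offers no internal proof either --- Theorem~\ref{thm:rigidity} is stated with citations only --- so any complete treatment would have to either reproduce the Auslander--Lichtenbaum arguments or cite them. If citation is acceptable, the honest version of your write-up is simply: reduce to $i=1$ by syzygies and then cite \cite{Aus61} and \cite{Lic66}; the intermediate inductive machinery you describe adds nothing that closes the gap, because every hard case (depth zero, ramified, nontrivial torsion in $M\otimes_R N$) is exactly the case your induction cannot reach.
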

 
 Heitmann \cite{Hei93} showed that rigidity may fail even when $ R $ is a Cohen-Macaulay (CM) local ring and $ \projdim_R(M) $ is finite. Let $ S $ be a local complete intersection ring of codimension $ c $. In \cite[Theorem~1.6]{Mur63}, Murthy showed that $ c + 1 $ consecutive vanishing of Tors involving a pair of $ S $-modules $ M $ and $ N $ forces the vanishing of all subsequent Tors. We refer the reader to \cite[Theorem~9.3.6]{Avr98} for a concise proof of this result. Theorem~\ref{thm:rigidity} has been generalized further by Avramov and Buchweitz in \cite[Theorem~4.9]{AB00}. They improved the number $ c+1 $ of consecutive vanishing of Tors by replacing it by $ \cx_S(M) + 1 $, where $ \cx_S(M) \; (\le c) $ is the complexity of $ M $; see \ref{P}. Moreover, they proved a counterpart of this result for Ext-modules; see \cite[Theorem~4.7]{AB00}. In this article, we prove analogues of Murthy's result and that of Avramov and Buchweitz for deformations of CM local rings of minimal multiplicity.
 
 The {\it multiplicity} of an $ R $-module $ M $, i.e., the normalized leading coefficient of the Hilbert-Samuel polynomial $ P_M(n) $ ($ = $ length of $ M/\fm^{n+1}M $ for all sufficiently large $ n $) is denoted by $ e(\fm, M) $, or simply by $ e(M) $. In \cite[(1)]{Abh67}, Abhyankar showed that if $ R $ is CM, then $ e(R) \ge \mu(\fm) - \dim(R) + 1 $, where $ \mu(\fm) $ denotes the minimal number of generators of $ \fm $. If equality holds, then $ R $ is said to have {\it minimal multiplicity}, or {\it maximal embedding dimension}. It is well-known that if the residue field $ k $ is infinite, then $ R $ has minimal multiplicity if and only if there exists an $ R $-regular sequence $ \underline{x} $ with the property that $ \fm^2 = (\underline{x})\fm $; see, e.g., \cite[4.6.14(c)]{BH98}. Hence every regular local ring has minimal multiplicity. But the converse is not necessarily true, e.g., $ R_1 = k[U,V]/(U^2, UV, V^2) $ and $ R_2 = k[[U,V]]/(UV) $, where $ U $ and $ V $ are indeterminates, and $ k $ is a field. Note that $ R_1 $ is not even Gorenstein. 
 
 We now state our main results. We first give a result on the vanishing of Ext.

 \begin{theorem}[Theorem~\ref{thm:vanshing-ext-over-deformation}]\label{main-Th1}
 	Let $ R $ be a $ d $-dimensional CM local ring of minimal multiplicity. Set $ S := R/(f_1,\ldots,f_c) $, where $ f_1,\ldots,f_c $ is an $ R $-regular sequence. Let $ M $ and $ N $ be maximal Cohen-Macaulay {\rm (}MCM{\rm )} $ S $-modules. Then the following statements are equivalent:
 	\begin{enumerate}[{\rm (i)}]
 		\item $ \Ext_S^i(M,N) = 0 $ for some $ (d+c+1) $ consecutive values of $ i \ge 2 $.
 		\item $ \Ext_S^i(M,N) = 0 $ for all $ i \ge 1 $.
 	\end{enumerate}
 	Moreover, if this holds true, then $ \projdim_R(M) $ or $ \injdim_R(N) $ is finite.
 \end{theorem}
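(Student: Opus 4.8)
The plan is to funnel the hypothesis, through three spectral-sequence reductions, into a clean Tor-rigidity statement over $R$, and then close that statement by reducing modulo a minimal reduction of $\fm$. Replacing $R$ by $R[X]_{\fm R[X]}$ I may assume $k$ is infinite, so that minimal reductions of $\fm$ and sufficiently general systems of parameters exist; this affects none of the relevant data. I first reduce to the case $\dim S=0$, i.e. $d=c$: choose a system of parameters $\underline y$ of $S$ of length $d-c$ that is regular on $S$, $M$ and $N$ and whose lift to $R$ is general enough that $R/(\underline y)$ is again of minimal multiplicity (for a general hyperplane section of a CM local ring of minimal multiplicity the embedding dimension and the dimension each drop by one while the multiplicity is unchanged, so the equality $e=\embdim-\dim+1$ persists). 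Then $S/(\underline y)S=(R/(\underline y))/(\overline{\mathbf f})$ with $\overline{\mathbf f}$ still an $R/(\underline y)$-regular sequence of length $c$; $M/(\underline y)M$ and $N/(\underline y)N$ are trivially MCM over the Artinian ring $S/(\underline y)S$; the change-of-rings spectral sequence $\Tor_p^S(\Ext_S^q(M,N),S/(\underline y)S)\Rightarrow\Ext_{S/(\underline y)S}^{q-p}(M/(\underline y)M,N/(\underline y)N)$, in which $p$ runs over $0,\dots,d-c$, turns $d+c+1$ consecutive vanishings of $\Ext_S^\bullet(M,N)$ into $2c+1$ consecutive vanishings over $S/(\underline y)S$; and $\projdim_R M$, $\injdim_R N$ are recovered from $\projdim_{R/(\underline y)}$, $\injdim_{R/(\underline y)}$ because $\underline y$ is regular on $M,N,R$. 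So assume henceforth $S$ is Artinian and $\Ext_S^i(M,N)=0$ for $2c+1$ consecutive $i\ge 2$.

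Because $\mathbf f$ is an $R$-regular sequence annihilating $N$, the Koszul complex gives $\Ext_R^q(S,N)\cong N^{\binom{c}{q}}$ as $S$-modules, so the $E_2$-page of $\Ext_S^p(M,\Ext_R^q(S,N))\Rightarrow\Ext_R^{p+q}(M,N)$ has entries $\Ext_S^p(M,N)^{\binom{c}{q}}$; hence $2c+1$ consecutive vanishings of $\Ext_S^\bullet(M,N)$ force $c+1$ consecutive vanishings of $\Ext_R^\bullet(M,N)$, in degrees $\ge c+2$. Since $N$ has finite length, $\Ext_R^i(M,N)\cong\Hom_R(\Tor_i^R(M,N^\vee),E)$, where $E$ is the injective hull of $k$ and $N^\vee=\Hom_R(N,E)$ also has finite length. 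Finally, over a CM local ring of minimal multiplicity the first syzygy of any MCM module is Ulrich: if $W$ is MCM and $\underline x$ is a minimal reduction of $\fm$, then $\underline x$ is $W$-regular and $\Omega_R^1(W)/\underline x\,\Omega_R^1(W)\cong\Omega^1_{R/(\underline x)}(W/\underline x W)$, a $k$-vector space because $R/(\underline x)$ has square-zero maximal ideal, so $\fm\,\Omega_R^1(W)=\underline x\,\Omega_R^1(W)$. As $M$ has depth $0$ over $R$, the module $U:=\Omega_R^{c+1}(M)$ is Ulrich; dimension-shifting in the first Tor-variable, the hypothesis has become: $\Tor_j^R(U,X)=0$ for $c+1$ consecutive values $j\ge 1$, with $U$ Ulrich and $X:=N^\vee$ of finite length.

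Now the rigidity step. With $\underline x$ a minimal reduction of $\fm$, an $R$-regular sequence of length $c$, $\underline x$ is $U$-regular and $U/\underline x U\cong k^{\mu(U)}$. Since $\projdim_R(R/(\underline x))=c$, the spectral sequence $\Tor_p^R(\Tor_j^R(U,X),R/(\underline x))\Rightarrow\Tor_{p+j}^R(U/\underline xU,X)$ (with $p=0,\dots,c$) converts the $c+1$ consecutive vanishings of $\Tor_j^R(U,X)$ into a single vanishing $\Tor_{j_0}^R(U/\underline xU,X)=0$; as $U/\underline xU\cong k^{\mu(U)}$, this reads $\Tor_{j_0}^R(k,X)^{\mu(U)}=0$, so $\mu(U)=0$ or $\Tor_{j_0}^R(k,X)=0$, i.e. $U=0$ or $\projdim_R X<\infty$. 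Unwinding: $U=\Omega_R^{c+1}(M)=0$ gives $\projdim_R M<\infty$, while $\projdim_R X=\projdim_R N^\vee<\infty$ gives $\injdim_R N<\infty$; transporting this back across the hyperplane section of the first paragraph yields the ``moreover'' for the original $M,N$. The equivalence then follows: (ii)$\Rightarrow$(i) is trivial, and assuming (i): if $\projdim_R M<\infty$ then, $\mathbf f$ being $R$-regular and annihilating $M$, $M$ is $S$-free, so $\Ext_S^{\ge 1}(M,N)=0$; if $\injdim_R N<\infty$ then $\injdim_S N=\injdim_R N-c=\dim S$, and Ischebeck's inequality for the MCM $S$-module $M$ gives $\Ext_S^{\ge1}(M,N)=0$.

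The main obstacle I anticipate is not the rigidity lemma — which, packaged through the derived tensor product, is short — but the bookkeeping that makes the threshold come out to exactly $d+c+1$: one must check carefully that a general hyperplane section preserves minimal multiplicity and produces a genuine codimension-$c$ deformation, and track how each of the three spectral sequences (for $S\to S/(\underline y)S$, for $R\to S$, and for $R\to R/(\underline x)$) degrades the number of consecutive vanishings by $d-c$, $c$, and $c$ respectively, leaving precisely one vanishing to drive the conclusion.
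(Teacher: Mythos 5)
Your reduction to the Artinian deformation, the Matlis-duality switch to Tor against $N^{\vee}$, and the Koszul spectral-sequence rigidity step applied to the Ulrich syzygy $U=\Omega_R^{c+1}(M)$ do give a correct (and genuinely different, quite economical) proof of the \emph{moreover} statement: that (i) forces $\projdim_R(M)<\infty$ or $\injdim_R(N)<\infty$. (Two small repairs there: the first ``spectral sequence'' you quote, with abutment in degree $q-p$, is not a standard object --- just iterate Lemma~\ref{lem:Ext-Tor-mod-x} one superficial element at a time, which shrinks the window from $d+c+1$ to $2c+1$ without moving the threshold $i\ge 2$; and one must check, as in Lemma~\ref{lem:min-mult-mod-x}, that the chosen elements are superficial so that minimal multiplicity is preserved.)

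The genuine gap is in your last paragraph, where you try to deduce (ii) from the disjunction alone. Both branches fail. First, ``$\projdim_R(M)<\infty$ and $\mathbf f M=0$ imply $M$ is $S$-free'' is false: take $R=k[[x,y]]$ (regular, hence of minimal multiplicity), $S=R/(xy)$, $M=S/(x)$; then $\projdim_R(M)=1$ but $M$ is a non-free MCM $S$-module. Second, ``$\injdim_R(N)<\infty$ implies $\injdim_S(N)=\injdim_R(N)-c$'' is equally false: with the same $S$ and $N=S/(x)$ one has $\injdim_R(N)<\infty$ but $\injdim_S(N)=\infty$ (an MCM module of finite injective dimension over the Gorenstein ring $S$ would be free). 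Finiteness of $\projdim$ or $\injdim$ descends from $S$ to $R$, not the other way. Worse, the disjunction by itself cannot yield (ii): in the example above $\projdim_R(M)<\infty$ holds for $M=N=(x)$, yet $\Ext_S^{2i}(M,M)\neq 0$ for all $i\ge 1$. So after your reduction you have discarded exactly the information needed for (i)$\Rightarrow$(ii). This is the point where the paper's induction on $c$ earns its keep: working over the intermediate ring $R'=R/(f_1,\dots,f_{c-1})$ it proves $\Ext_{R'}^i(M,N)=0$ for all $i\ge 2$, which via the long exact sequence of Lemma~\ref{lem:long-exact-seq-ext} gives the periodicity $\Ext_S^i(M,N)\cong\Ext_S^{i+2}(M,N)$ for all $i\ge 1$, and only then does the vanishing window (i) propagate to all $i\ge 1$. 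Your argument has no analogue of this step, so the equivalence --- the main assertion of the theorem --- is not proved.
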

 
 Next we state our result on the vanishing of Tor:
 
 \begin{theorem}[Theorem~\ref{thm:vanshing-tor-over-deformation}]\label{main-Th2}
 	With the hypotheses as in {\rm Theorem~\ref{main-Th1}}, the following statements are equivalent:
 	\begin{enumerate}[{\rm (i)}]
 		\item $ \Tor^S_i(M,N) = 0 $ for some $ (d+c+1) $ consecutive values of $ i \ge c + 2 $.
 		\item $ \Tor^S_i(M,N) = 0 $ for all $ i \ge c+ 1 $.
 	\end{enumerate}
 	Moreover, if this holds true, then $ \projdim_R(M) $ or $ \projdim_R(N) $ is finite.
 \end{theorem}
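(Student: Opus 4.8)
The plan is to prove (i) $\Rightarrow$ (ii) by transporting the vanishing hypothesis down to the Artinian reduction of $R$ and then back up; the ``moreover'' falls out of the descent, and (ii) $\Rightarrow$ (i) is trivial. The engine is the following elementary fact, which I would settle first: \emph{if $(T,\fn,k)$ is an Artinian local ring with $\fn^2=0$ that is not a field, and $A,B$ are finitely generated $T$-modules with $\Tor^T_i(A,B)=0$ for a single $i\ge 2$, then $A$ or $B$ is $T$-free, and hence $\Tor^T_j(A,B)=0$ for all $j\ge 1$.} Over such a $T$ every syzygy sits inside $\fn T^{(\ast)}$ and so is annihilated by $\fn$, i.e.\ is a $k$-vector space; writing $\Omega^1_T(B)\cong k^{\,b}$ and $\Omega^2_T(A)\cong k^{\,a}$, where $a=0\iff A$ is free and $b=0\iff B$ is free, one computes $\Tor^T_2(A,B)\cong(B/\fn B)^{\,a}$ and $\Tor^T_i(A,B)\cong\Tor^T_{i-3}(k,k)^{\,ab}$ for $i\ge 3$; since $\Tor^T_j(k,k)\ne 0$ for all $j$ when $T$ is not a field, the vanishing forces $a=0$ or $b=0$.

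Now for the descent. After the faithfully flat base change $R\rightsquigarrow R[X]_{\fm R[X]}$ I may assume $k$ is infinite (this preserves Cohen-Macaulayness, minimal multiplicity, the regular sequence, and all the relevant $\Tor$'s), and I may assume $M,N\ne 0$. If $R$ is regular then $S$ is a complete intersection of codimension $c$, every module has finite projective dimension over $R$, and (i) $\Leftrightarrow$ (ii) is the known statement \cite[Theorem~1.6]{Mur63}, \cite[Theorem~4.9]{AB00}; so assume $R$ is not regular. As $M,N$ are maximal Cohen-Macaulay over $S$ they have depth $d-c$ over $R$, so $M':=\Omega^c_R(M)$ and $N':=\Omega^c_R(N)$ are nonzero maximal Cohen-Macaulay $R$-modules with $\Tor^R_i(M,N)\cong\Tor^R_{i-2c}(M',N')$ for $i\ge 2c+1$. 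Running the iterated Gulliksen change-of-rings long exact sequence down the tower $R=S_0\supseteq S_1\supseteq\cdots\supseteq S_c=S$, $S_j=S_{j-1}/(f_j)$, each single step converts vanishing of $\Tor^{S_j}_\bullet(M,N)$ on a block of consecutive integers into vanishing of $\Tor^{S_{j-1}}_\bullet(M,N)$ on the same block with its bottom entry deleted; hence the block of $d+c+1$ consecutive vanishing values with $i\ge c+2$ from (i) produces $d+1$ consecutive values $j\ge 2$ with $\Tor^R_j(M',N')=0$. Choose a minimal reduction $\underline{z}=z_1,\dots,z_d$ of $\fm$, so $\oR:=R/(\underline{z})$ has $\overline{\fm}^2=0$; being a system of parameters of $R$, $\underline{z}$ is automatically regular on the maximal Cohen-Macaulay modules $M'$, $N'$ and on all their syzygies, and the same ``delete one value'' bookkeeping applied $d$ more times leaves a single $j\ge 2$ with $\Tor^{\oR}_j(\overline{M'},\overline{N'})=0$. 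Since $R$ is not regular $\oR$ is not a field, so the engine gives that $\overline{M'}$ or $\overline{N'}$ is $\oR$-free; as finiteness of projective dimension is unaffected by passing modulo a sequence regular on both the ring and the module, $M'$ or $N'$ is $R$-free, whence $\pd_R(M)\le c$ or $\pd_R(N)\le c$, and by Auslander-Buchsbaum the finite one equals $c$. This is the ``moreover'', and note that the count $d+c+1$ is exactly $c$ (spent in the descent $S\rightsquigarrow R$) $+\,d$ (spent in $R\rightsquigarrow\oR$) $+\,1$ (consumed by the engine).

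For (i) $\Rightarrow$ (ii), say $\pd_R(M)<\infty$, so $\Tor^R_n(M,N)=0$ for all $n>c$. Putting this into the change-of-rings spectral sequence $E^2_{p,q}=\Tor^S_p(M,N)^{\binom cq}\Rightarrow\Tor^R_{p+q}(M,N)$ ($0\le q\le c$)---equivalently, into the iterated Gulliksen long exact sequence---the vanishing of the abutment in degrees $>c$, read off the top row $q=c$ and the bottom row $q=0$, yields the propagation rule: for $p\ge 1$, if $\Tor^S_j(M,N)=0$ for all $j$ in the window $\{p+2,\dots,p+c+1\}$ then $\Tor^S_p(M,N)=0$; and for $p>c$, if $\Tor^S_j(M,N)=0$ for all $j$ in $\{p-c-1,\dots,p-2\}$ then $\Tor^S_p(M,N)=0$. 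Beginning with the block of $d+c+1\ (\ge c+1)$ consecutive vanishing values from (i) and alternating these two steps fills in $\Tor^S_i(M,N)=0$ for all $i\ge c+1$, giving (ii); and (ii) $\Rightarrow$ (i) is immediate because $c+2,\dots,2c+d+2$ is a block of $d+c+1$ consecutive integers, all $\ge c+2$. The part I expect to demand the most care is this last step: tracking precisely, through both the Gulliksen sequences and the spectral sequence, how many consecutive vanishing values are consumed or produced, and arranging the propagation so that it delivers vanishing for \emph{all} $i\ge c+1$ rather than only for $i\gg 0$; secondary technical points are the existence of a minimal reduction of $\fm$ regular on $M'$ and $N'$ (clear once $k$ is infinite) and pinning down the precise form of the Gulliksen long exact sequence / change-of-rings spectral sequence when both arguments are $S$-modules annihilated by $f_1,\dots,f_c$.
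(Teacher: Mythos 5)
Your proposal is essentially correct, but it takes a genuinely different route from the paper's. The paper argues by induction on $c$: the base case $c=0$ is Theorem~\ref{thm:min-mult-tor-test}, which rests on the fact that over a ring of minimal multiplicity the syzygies of non-free MCM modules are Ulrich, combined with the Ulrich test-module result Theorem~\ref{thm:Ulrich-mod-is-test-mod-for-mcm-mods} (itself an induction on $d$ via superficial elements); the inductive step replaces $M,N$ by first syzygies over $R'=R/(f_1,\ldots,f_{c-1})$, uses Lemma~\ref{lem:long-exact-seq-tor} once, and finally obtains (ii) from the resulting two-periodicity $\Tor^S_i(M,N)\cong\Tor^S_{i+2}(M,N)$ for $i\ge c+1$. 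You avoid both the induction on $c$ and the Ulrich machinery: you first prove the ``moreover'' directly, by running the Gulliksen sequence $c$ times to pass from $S$ to $R$ (losing one value of the window at each step), shifting by $c$ syzygies in each variable to MCM $R$-modules, and then factoring out a minimal reduction $\underline{z}$ with $\fm^2\subseteq(\underline{z})$ (losing $d$ more values via Lemma~\ref{lem:Ext-Tor-mod-x}) so as to land in an Artinian ring with square-zero maximal ideal, where a single vanishing $\Tor_j$, $j\ge 2$, forces one of the modules to be free by an elementary Betti-number computation; the $(d+c+1)$-count is exactly $c+d+1$. You then deduce (i)$\Rightarrow$(ii) from the collapsed change-of-rings spectral sequence $E^2_{p,q}=\Tor^S_p(M,N)^{\binom cq}\Rightarrow\Tor^R_{p+q}(M,N)$ together with $\Tor^R_n(M,N)=0$ for $n>c$. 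I checked your two propagation rules (top row and bottom row) and the window bookkeeping: a block of length $d+c+1\ge c+1$ indeed lets the downward and upward steps run without leaving gaps, and your argument in fact yields the stronger conclusion $\Tor^S_i(M,N)=0$ for all $i\ge 1$, which implies (ii). What the paper's route buys is that its intermediate results (Ulrich modules as test modules, Theorem~\ref{thm:min-mult-tor-test}) are reused throughout the paper; what yours buys is a shorter, more self-contained argument, no induction on $c$, and a sharper range of vanishing.

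Two small blemishes, neither fatal. First, in your ``engine'' the identification $\Tor^T_2(A,B)\cong (B/\mathfrak{n}B)^{a}$ is not correct, since dimension shifting does not reach homological degree $0$; the correct computation is $\Tor^T_2(A,B)\cong\Tor^T_1(k,B)^{\beta_1(A)}\cong k^{\beta_1(A)\beta_1(B)}$, which still gives exactly the dichotomy you need (your formula for $i\ge 3$ is fine). Second, in the regular case Murthy's theorem only propagates vanishing upward from the given window, so by itself it does not produce vanishing down to $i=c+1$; this is harmless, because in that case $\projdim_R(M)\le c$ is automatic and your own spectral-sequence propagation (or \cite[Theorem~4.9]{AB00}) finishes the argument, but the citation as stated is slightly imprecise.
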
 
 
 We note that practically all results on complete intersection rings (including the results of Murthy, Avramov-Buchweitz) do use the fact that projective dimension of all modules over a regular local ring is finite. This fact is not necessarily true over rings of minimal multiplicity. The essential property of rings of minimal multiplicity that we use is the following: The first (and hence all subsequent) syzygy of a non-free MCM module is Ulrich. (An MCM module is assumed to be non-zero). Recall that an $ R $-module $ M $ is said to be {\it Ulrich} if $ M $ is an MCM $ R $-module and $ e(M) = \mu(M) $. It should be noted that for an MCM $ R $-module $ M $, we always have $ e(M) \ge \mu(M) $. Moreover, when $ k $ is infinite, then equality holds if and only if $ \fm M = (\underline{x}) M $ for some $ M $-regular sequence $ \underline{x} $; see \cite[Lemma~(1.3)]{BHU87}. We refer the reader to \cite{BHU87, HUB91} for more details on Ulrich modules.
 
 As an application of our result, we show that the commutative version of a conjecture of Tachikawa holds true for deformations of CM local rings of minimal multiplicity; see Theorem~\ref{Tachikawa}. As other applications, we obtain a few necessary and sufficient conditions for a deformation of a CM local ring of minimal multiplicity to be regular or Gorenstein. These conditions are based on the vanishing of certain Exts or Tors involving homomorphic images of syzygy modules of the residue field; see Theorems~\ref{app2} and \ref{app3}. Similar criteria for a CM local ring of minimal multiplicity to be regular or Gorenstein are given in Propositions~\ref{prop:min-mult-charc-reg-hom-image}, \ref{prop:min-mult-charc-Gor-hom-image} and Theorems~\ref{thm:min-mult-charc-reg-direct-summand}, \ref{thm:min-mult-charc-Gor-direct-summand}. These criteria are motivated by the following results: \cite[Corollary~1.3]{Dut89}, \cite[Proposition~7]{Mar96}, \cite[Corollary~9]{Avr96}, \cite[4.3 and 6.5]{Tak06}, \cite[3.2, 3.4 and 3.7]{GGP} and \cite[4.1, 5.1 and 5.5]{Gho}.
%
 
 Here is an overview of the contents of the article. In Section~\ref{sec:preliminaries}, we introduce some notations and discuss a few results that we need. In Section~\ref{sec:Ulrich-mod-is-test-mod}, we show properties of Ulrich modules as test modules for projective and injective dimensions. In Section~\ref{sec:vanishing-ext-tor}, we provide some results on the vanishing of Exts and Tors over CM local rings of minimal multiplicity. These are the base cases of Theorems~\ref{main-Th1} and \ref{main-Th2}. In Section~\ref{sec:criteria-reg-Gor-via-syzygy}, we give our results on regularity and Gorenstein properties of CM local rings of minimal multiplicity. In Section~\ref{sec:vanishing-ext-tor-deformation}, we prove our main results: Theorems~\ref{main-Th1} and \ref{main-Th2}. Finally, in Section~\ref{sec:application}, we give applications of our results.
 
\section{Preliminaries}\label{sec:preliminaries}

Throughout the article, $ R $ always denotes a CM local ring of dimension $ d $ with the unique maximal ideal $\mathfrak{m}$ and residue field $k$. For an $ R $-module $ M $, and $ n \ge 0 $, we denote the $n$th {\it syzygy module} of $M$ by $\Omega_n^R(M)$, i.e., the image of the $ n $th differential of an augmented minimal free resolution of $ M $. 
 
 \begin{para}\label{para:res-field-infinite}
 	To prove our results, we may without loss of generality assume that the residue field $k$ is infinite. If the residue field $ k $ is finite, then we use the standard trick to replace $ R $ by $ R' := R[X]_{\mathfrak{m}R[X]} $, where $ X $ is an indeterminate. Clearly, the residue field of $ R' $ is $ k(X) $, which is infinite. For more detail explanations, we refer the reader to \cite[Section~2.1]{Gho}.
 \end{para}

 \begin{para}
 	Let $ M $ be an $ R $-module, and $ x $ be an $ M $-regular element. It is not always true that $ e(\fm,M) = e(\fm/(x), M/xM) $. This holds true if $ x $ is an $ M $-superficial element. An element $ x \in \fm $ is called $ M $-{\it superficial} if there exists an integer $ c \ge 1 $ such that
 	\[
	 	\left( \fm^{n+1} M :_M x \right) \cap \fm^c M = \fm^n M \quad \mbox{for all } n \ge c.
 	\]
 	It is well-known that if $ k $ is infinite, then there exists an $ M $-superficial element.
 	If $ \dim(M) \ge 1 $, then for every $ M $-superficial element $ x $, it can be shown
 	that $ x \notin \fm^2 $, which yields that $ \mu(\fm/(x)) = \mu(\fm) - 1 $. 
 	If $ \depth(M) \ge 1 $, then one can easily show that every $ M $-superficial element is $ M $-regular;
 	see, e.g., \cite[p.~67, paragraph~3]{HM97} for the case $ M = R $.
 	Moreover, if $ x \in R $ is both $ M $-superficial and $ M $-regular, 
 	then $ e(\fm,M) = e(\fm/(x), M/xM) $; see \cite[Corollary~10(5)]{Pu1}. Thus, in view of these results, we obtain the following:
 \end{para}
 
 \begin{lemma}\label{lem:min-mult-mod-x}
 	\begin{enumerate}[{\rm (i)}]
 		\item
 		Assume that $ x \in R $ is both $ R $-superficial and $ R $-regular. If $ R $ has minimal multiplicity, then $ R/(x) $ also has minimal multiplicity.
 		\item
 		Let $ M $ be an $ R $-module. Assume that $ x \in R $ is both $ M $-superficial and $ R \oplus M $-regular. If $ M $ is Ulrich, then so is the $ R/(x) $-module $ M/xM $.
 	\end{enumerate}
 \end{lemma}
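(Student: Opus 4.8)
The plan is to obtain both assertions directly from the facts on superficial elements recalled in the paragraph preceding the lemma; the only work is to match the invariants computed over $R$ with those computed over $\oR := R/(x)$.

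For (i), I would argue as follows. Since $x$ is $R$-regular, $\oR$ is again CM, of dimension $d-1$, and the mere existence of an $R$-regular element in $\fm$ forces $\dim(R)\ge 1$. As $x$ is $R$-superficial and $R$-regular, the Hilbert--Samuel multiplicity is preserved: $e(\oR)=e(\fm/(x),R/(x))=e(\fm,R)=e(R)$. As $\dim(R)\ge 1$ and $x$ is $R$-superficial, $x\notin\fm^2$, hence $\embdim(\oR)=\mu(\fm/(x))=\mu(\fm)-1=\embdim(R)-1$. Substituting these into the minimal multiplicity equality for $R$ gives
\[
 e(\oR)=e(R)=\embdim(R)-\dim(R)+1=\bigl(\embdim(\oR)+1\bigr)-\bigl(\dim(\oR)+1\bigr)+1=\embdim(\oR)-\dim(\oR)+1,
\]
so $\oR$ has minimal multiplicity.

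For (ii), since $x$ is $R\oplus M$-regular it is regular on both $R$ and $M$; thus $\oR$ is CM of dimension $d-1$, and $M/xM$ is nonzero (Nakayama) with $\depth_{\oR}(M/xM)=\depth_R(M)-1=d-1=\dim(\oR)$, so $M/xM$ is an MCM $\oR$-module. Because $x\in\fm$, reduction modulo $x$ does not change the minimal number of generators, so $\mu_{\oR}(M/xM)=\mu_R(M)$. Finally, $x$ is $M$-superficial and $M$-regular, so the multiplicity is again preserved, $e_{\oR}(M/xM)=e(\fm/(x),M/xM)=e(\fm,M)=e(M)$; here the first equality uses that the Hilbert--Samuel function of $M/xM$ is the same whether $M/xM$ is viewed over $R$ or over $\oR$. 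Combining, $e_{\oR}(M/xM)=e(M)=\mu_R(M)=\mu_{\oR}(M/xM)$, where the middle equality is the hypothesis that $M$ is Ulrich; hence $M/xM$ is Ulrich over $\oR$.

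Since every ingredient is quoted from the preparatory discussion (the multiplicity equalities being \cite[Corollary~10(5)]{Pu1}), there is no genuine obstacle here. The only points that require attention are verifying $x\notin\fm^2$ for the embedding dimension count in (i) --- which relies on $\dim(R)\ge1$ --- and observing in (ii) that the multiplicity of $M/xM$ is intrinsic, so that the equality $e(\fm,M)=e(\fm/(x),M/xM)$ may be read over $\oR$.
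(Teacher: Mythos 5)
Your proof is correct and follows exactly the route the paper intends: the lemma is stated there as an immediate consequence of the recalled facts on superficial elements (preservation of multiplicity via \cite[Corollary~10(5)]{Pu1}, $x\notin\fm^2$ so $\mu(\fm/(x))=\mu(\fm)-1$, and the drop of dimension/depth by one), and you simply spell out that bookkeeping, including the minor points ($d\ge 1$ forced by $x\in\fm$ being regular, $M/xM\neq 0$, and the multiplicity of $M/xM$ being the same over $R$ and $R/(x)$). Nothing further is needed.
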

 
 We recall the following lemma concerning the behaviour of consecutive vanishing of Exts or Tors after going modulo a regular element.

\begin{lemma}\cite[p.~140, Lemma~2]{Mat86}\label{lem:Ext-Tor-mod-x}
	Suppose $ M $ and $ N $ are $ R $-modules. Let $ x $ be an $ R \oplus M \oplus N$-regular element. Set $\overline{(-)} := (-) \otimes_R R/(x)$. Fix two positive integers $ m $ and $ n $. If $ \Ext_R^i(M,N) = 0 $ {\rm(}resp. $ \Tor_i^R(M,N) = 0 ${\rm )} for all $ n \le i \le n + m $, then
	\begin{align*}
	&\Ext_{\oR}^i(\oM,\oN) = 0 \quad\mbox{for all } n \le i \le n + m - 1\\
	(\mbox{resp.} \quad &\Tor_i^{\oR}(\oM,\oN) = 0 \quad\mbox{for all } n + 1 \le i \le n + m).
	\end{align*}
\end{lemma}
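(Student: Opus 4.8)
The plan is to reduce everything to the classical change-of-rings computation. Write $\oR := R/(x)$ and fix a free resolution $F_\bullet \lra M$ of $M$ over $R$ (it need not be minimal). The one fact that does the work is this: since $x$ is $M$-regular, the resolution $0 \lra R \xrightarrow{x} R \lra \oR \lra 0$ shows $\Tor_i^R(\oR,M) = 0$ for all $i \ge 1$, so the base-changed complex $G_\bullet := F_\bullet \otimes_R \oR$ is a free resolution of $\oM = M \otimes_R \oR$ over $\oR$. Consequently $\Ext_{\oR}^\bullet(\oM, \oN)$ and $\Tor_\bullet^{\oR}(\oM,\oN)$ can be computed from $G_\bullet$, and hence from $F_\bullet$, after which the result falls out of a single long exact sequence in each case. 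This base-change step is the only place the $M$-regularity of $x$ enters.

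For the Ext assertion, I would first use tensor-hom adjunction to identify $\Hom_{\oR}(G_i, \oN) = \Hom_{\oR}(F_i \otimes_R \oR, \oN) \cong \Hom_R(F_i, \oN)$, so that $\Ext_{\oR}^\bullet(\oM,\oN)$ is the cohomology of the complex $\Hom_R(F_\bullet, \oN)$. Applying the exact functor $\Hom_R(F_i, -)$ (each $F_i$ is free) to the short exact sequence $0 \lra N \xrightarrow{x} N \lra \oN \lra 0$ --- exact because $x$ is $N$-regular --- produces a short exact sequence of complexes $0 \lra \Hom_R(F_\bullet, N) \xrightarrow{x} \Hom_R(F_\bullet, N) \lra \Hom_R(F_\bullet, \oN) \lra 0$, whose cohomology long exact sequence is
\[
\cdots \lra \Ext_R^i(M,N) \xrightarrow{x} \Ext_R^i(M,N) \lra \Ext_{\oR}^i(\oM,\oN) \lra \Ext_R^{i+1}(M,N) \xrightarrow{x} \Ext_R^{i+1}(M,N) \lra \cdots.
\]
If $\Ext_R^i(M,N) = 0$ for all $n \le i \le n+m$, then for each $i$ in the smaller range $n \le i \le n+m-1$ the two outer terms $\Ext_R^i(M,N)$ and $\Ext_R^{i+1}(M,N)$ both vanish, forcing $\Ext_{\oR}^i(\oM,\oN) = 0$.

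For the Tor assertion the argument is symmetric. Here $G_\bullet \otimes_{\oR} \oN \cong (F_\bullet \otimes_R N) \otimes_R \oR$, so $\Tor_i^{\oR}(\oM,\oN)$ is the $i$th homology of $(F_\bullet \otimes_R N) \otimes_R \oR$. Since $x$ is $N$-regular and each $F_i$ is free, $x$ is a nonzerodivisor on every $F_i \otimes_R N$, so $0 \lra F_\bullet \otimes_R N \xrightarrow{x} F_\bullet \otimes_R N \lra (F_\bullet \otimes_R N)\otimes_R \oR \lra 0$ is a short exact sequence of complexes, with homology long exact sequence
\[
\cdots \lra \Tor_i^R(M,N) \xrightarrow{x} \Tor_i^R(M,N) \lra \Tor_i^{\oR}(\oM,\oN) \lra \Tor_{i-1}^R(M,N) \xrightarrow{x} \Tor_{i-1}^R(M,N) \lra \cdots.
\]
If $\Tor_i^R(M,N) = 0$ for all $n \le i \le n+m$, then for each $i$ with $n+1 \le i \le n+m$ the two outer terms $\Tor_i^R(M,N)$ and $\Tor_{i-1}^R(M,N)$ vanish, forcing $\Tor_i^{\oR}(\oM,\oN) = 0$.

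I do not expect a genuine obstacle: this is essentially Matsumura's own argument, and the only point requiring care is the index bookkeeping. The connecting map in cohomology raises degree by one, so Ext-vanishing over $R$ on $[n, n+m]$ only guarantees Ext-vanishing over $\oR$ on $[n, n+m-1]$ (a loss at the top); the connecting map in homology lowers degree by one, so Tor-vanishing over $R$ on $[n, n+m]$ only guarantees Tor-vanishing over $\oR$ on $[n+1, n+m]$ (a loss at the bottom). This asymmetry is exactly what the statement records. One should also keep in mind at the outset that choosing $F_\bullet$ non-minimal causes no loss, since Ext and Tor do not depend on the choice of free resolution.
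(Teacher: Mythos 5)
Your argument is correct: the base change $F_\bullet \otimes_R \oR$ being an $\oR$-free resolution of $\oM$ (using that $x$ is both $R$- and $M$-regular), the adjunction/associativity identifications, and the two long exact sequences with the stated index bookkeeping all hold, and together they give exactly the asserted ranges of vanishing for $\Ext_{\oR}^i(\oM,\oN)$ and $\Tor_i^{\oR}(\oM,\oN)$. The paper offers no proof of its own but simply cites Matsumura, and your proof is the standard change-of-rings argument from that source, so there is nothing to compare beyond noting the approaches coincide.
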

 
 By considering the long exact sequences of Ext (resp. Tor) modules, and using induction on $ j $, one can prove the following:
 
\begin{lemma}\label{lem:syz-ext-tor}
	For $ R $-modules $ M $ and $ N $, we have the following isomorphisms:
	\begin{enumerate}[{\rm (i)}]
		\item $ \Ext_R^i\big(\Omega_j^R(M),N\big) \cong \Ext_R^{i+j}(M,N) \quad \mbox{for all }i \ge 1$ and $ j \ge 0 $.
		\item $ \Tor_i^R\big(\Omega_j^R(M),N\big) \cong \Tor_{i+j}^R(M,N) \quad \mbox{for all }i \ge 1$ and $ j \ge 0 $.
	\end{enumerate}
\end{lemma}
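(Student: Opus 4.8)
The plan is to prove both isomorphisms simultaneously by induction on $ j $. The base case $ j = 0 $ is immediate, since $ \Omega_0^R(M) = M $, so that the claimed isomorphisms simply read $ \Ext_R^i(M,N) \cong \Ext_R^i(M,N) $ and $ \Tor_i^R(M,N) \cong \Tor_i^R(M,N) $. For the inductive step I would first dispose of the case $ j = 1 $, stated for an arbitrary finitely generated module, and then iterate it against the induction hypothesis.

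Concretely, for any finitely generated $ R $-module $ L $, the first step of a minimal free resolution of $ L $ provides a short exact sequence
\[
0 \lra \Omega_1^R(L) \lra R^{\mu(L)} \lra L \lra 0 .
\]
Applying $ \Hom_R(-,N) $ and using that $ \Ext_R^i\big(R^{\mu(L)},N\big) = 0 $ for all $ i \ge 1 $, the long exact sequence of Ext collapses into isomorphisms $ \Ext_R^i\big(\Omega_1^R(L),N\big) \cong \Ext_R^{i+1}(L,N) $ for all $ i \ge 1 $; applying $ -\otimes_R N $ and using that $ \Tor_i^R\big(R^{\mu(L)},N\big) = 0 $ for all $ i \ge 1 $, the long exact sequence of Tor gives, in exactly the same way, $ \Tor_i^R\big(\Omega_1^R(L),N\big) \cong \Tor_{i+1}^R(L,N) $ for all $ i \ge 1 $. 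Now I would take $ L = \Omega_j^R(M) $: since truncating a minimal free resolution of $ M $ at homological degree $ j $ yields a minimal free resolution of $ \Omega_j^R(M) $, one has $ \Omega_1^R\big(\Omega_j^R(M)\big) \cong \Omega_{j+1}^R(M) $, and combining the displayed shift isomorphisms with the induction hypotheses $ \Ext_R^i\big(\Omega_j^R(M),N\big) \cong \Ext_R^{i+j}(M,N) $ and $ \Tor_i^R\big(\Omega_j^R(M),N\big) \cong \Tor_{i+j}^R(M,N) $ completes the induction.

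The argument is purely formal, so I expect no serious obstacle. The only points meriting attention are the index bookkeeping — the shift by one at each step, and the reason the range is $ i \ge 1 $ rather than $ i \ge 0 $ (for $ i = 0 $ the terms $ \Hom_R(\Omega_1^R(L),N) $ and $ \Omega_1^R(L)\otimes_R N $ genuinely participate in the long exact sequences, so the collapse fails) — together with the routine observation that iterating the $ j = 1 $ case is legitimate, i.e., that $ \Omega_1^R $ of a $ j $th syzygy may be identified with the $ (j+1) $st syzygy.
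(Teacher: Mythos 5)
Your argument is correct and is exactly the route the paper indicates (the paper only sketches it as "long exact sequences of Ext/Tor plus induction on $j$"): the short exact sequence $0 \to \Omega_1^R(L) \to R^{\mu(L)} \to L \to 0$ collapses the long exact sequences in degrees $i \ge 1$, and iterating via $\Omega_1^R\big(\Omega_j^R(M)\big) \cong \Omega_{j+1}^R(M)$ gives the stated shifts. Your index bookkeeping and the explanation of why $i \ge 1$ is needed are both accurate, so nothing further is required.
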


 Using a standard change of rings spectral sequence, we obtain the following:
 
\begin{lemma}\cite[Theorem~10.75]{Rot09}\label{lem:long-exact-seq-ext}
	Set $ S := R/(f) $, where $ f $ is an $ R $-regular element. Let $ M $ and $ N $ be $ S $-modules. Then we have the following long exact sequence:
	\begin{align*}
		0 \lra  \Ext_S^1(M,N) \lra & \Ext_R^1(M,N) \lra \Ext_S^0(M,N) \lra \\
		& \quad \quad \vdots \\
		\Ext_S^i(M,N) \lra & \Ext_R^i(M,N) \lra \Ext_S^{i-1}(M,N) \lra \\
		\Ext_S^{i+1}(M,N) \lra & \Ext_R^{i+1}(M,N) \lra \Ext_S^i(M,N) \lra \cdots.
	\end{align*}
\end{lemma}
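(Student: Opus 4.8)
The statement is the standard change-of-rings long exact sequence (it is quoted from \cite[Theorem~10.75]{Rot09}) in the special case of a hypersurface quotient $S = R/(f)$, so the plan is simply to recall a proof; the argument is routine and there is no genuine obstacle. The route I would take is the Cartan--Eilenberg change-of-rings spectral sequence for the surjection $R \to S$: for the $S$-module $M$ and the $R$-module $N$ it reads
\[
E_2^{p,q} \;=\; \Ext_S^p\big(M,\,\Ext_R^q(S,N)\big) \;\Longrightarrow\; \Ext_R^{p+q}(M,N),
\]
where the coefficient modules $\Ext_R^q(S,N)$ carry their natural $S$-structure. So the first step is to set up this spectral sequence, and the second is to compute its coefficients in our situation.

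For the coefficients: since $f$ is a non-zerodivisor on $R$, the two-term complex $0 \lra R \xrightarrow{\,f\,} R \lra 0$ is a free $R$-resolution of $S$; applying $\Hom_R(-,N)$ gives the complex $0 \lra N \xrightarrow{\,f\,} N \lra 0$, in which multiplication by $f$ is zero because $N$ is an $S$-module. Hence $\Ext_R^0(S,N) \cong N \cong \Ext_R^1(S,N)$ and $\Ext_R^q(S,N) = 0$ for $q \ge 2$, so $E_2$ is concentrated in the two adjacent rows $q = 0,1$ with $E_2^{p,0} = E_2^{p,1} = \Ext_S^p(M,N)$ for all $p$. A first-quadrant spectral sequence living on two adjacent rows satisfies $E_3 = E_\infty$ (the only possibly nonzero differentials are $d_2\colon E_2^{p,1} \to E_2^{p+2,0}$), and splicing the two-step filtrations $0 \to E_\infty^{n,0} \to \Ext_R^n(M,N) \to E_\infty^{n-1,1} \to 0$ together with these $d_2$'s yields precisely
\[
\cdots \lra \Ext_S^i(M,N) \lra \Ext_R^i(M,N) \lra \Ext_S^{i-1}(M,N) \xrightarrow{\,d_2\,} \Ext_S^{i+1}(M,N) \lra \Ext_R^{i+1}(M,N) \lra \cdots;
\]
since $E_2^{-1,1} = \Ext_S^{-1}(M,N) = 0$ this begins $0 \to \Ext_S^1(M,N) \to \Ext_R^1(M,N) \to \Ext_S^0(M,N) \to \cdots$, which is the displayed sequence (and in degree $0$ it restricts to the canonical identification $\Hom_R(M,N) = \Hom_S(M,N)$).

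The same computation can be packaged without spectral sequences, and this actually gives a sharper conclusion: by derived adjunction $\mathbf{R}\Hom_R(M,N) \simeq \mathbf{R}\Hom_S\big(M,\mathbf{R}\Hom_R(S,N)\big)$, and $\mathbf{R}\Hom_R(S,N)$ is represented by the complex $N \xrightarrow{\,f\,} N$, i.e.\ by $N \oplus N[-1]$ because $f$ acts as $0$ on $N$; hence $\mathbf{R}\Hom_R(M,N) \simeq \mathbf{R}\Hom_S(M,N) \oplus \mathbf{R}\Hom_S(M,N)[-1]$, which on cohomology reads $\Ext_R^i(M,N) \cong \Ext_S^i(M,N) \oplus \Ext_S^{i-1}(M,N)$, and the asserted long exact sequence (with all connecting maps zero) is then immediate. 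In either approach the only points requiring a little care are the sign and indexing conventions, so that the connecting maps come out as $\Ext_S^{i-1}(M,N) \to \Ext_S^{i+1}(M,N)$ and the degree-zero term is $\Hom_R(M,N)$; I would present the spectral-sequence argument, since it matches the cited source, and note the splitting as a remark.
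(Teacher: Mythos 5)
Your main argument is correct and is exactly the route the paper intends: the paper gives no proof of its own beyond the citation of \cite[Theorem~10.75]{Rot09}, prefaced by ``using a standard change of rings spectral sequence,'' and your computation of the coefficients $\Ext_R^q(S,N)$ from the resolution $0 \to R \xrightarrow{\,f\,} R \to 0$, the collapse to the two rows $q=0,1$, and the splicing of the two-row spectral sequence (including the edge behaviour $0 \to \Ext_S^1(M,N) \to \Ext_R^1(M,N) \to \Hom_S(M,N) \to \cdots$) reproduce that standard proof faithfully.

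However, the ``sharper conclusion'' you propose to record as a remark is false and should be removed. The complex $N \xrightarrow{\,f\,} N$ computes $\mathbf{R}\Hom_R(S,N)$ only as an object of $D(R)$, where it does split as $N \oplus N[-1]$; but the adjunction $\mathbf{R}\Hom_R(M,N) \simeq \mathbf{R}\Hom_S\big(M,\mathbf{R}\Hom_R(S,N)\big)$ needs $\mathbf{R}\Hom_R(S,N)$ as an object of $D(S)$, and there it does \emph{not} split in general: the obstruction is a class in $\Ext_S^2(N,N)$, namely the degree-two cohomology (Eisenbud) operator attached to $f$, and the connecting maps $\Ext_S^{i-1}(M,N) \to \Ext_S^{i+1}(M,N)$ in the lemma are given by this operator, which is typically nonzero. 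Concretely, take $R=k[[x]]$, $f=x^2$, $M=N=k$: then $\Ext_S^i(k,k)\cong k$ for all $i\ge 0$ while $\Ext_R^i(k,k)=0$ for $i\ge 2$, so the asserted isomorphism $\Ext_R^i(M,N)\cong \Ext_S^i(M,N)\oplus\Ext_S^{i-1}(M,N)$ fails for every $i\ge 2$, and the long exact sequence instead forces the connecting maps to be isomorphisms in those degrees; Example~\ref{exam:rigidity-fails-for-min-mult} of the paper, viewed over $R=k[[X,Y]]$, gives another counterexample. Indeed, the nonvanishing of these connecting maps is essential to the way the lemma is used in the paper, e.g.\ to obtain the periodicity $\Ext_S^i(M,N)\cong \Ext_S^{i+2}(M,N)$ in the proof of Theorem~\ref{thm:vanshing-ext-over-deformation}; with zero connecting maps that step would instead yield the (false) vanishing of all $\Ext_S^i(M,N)$.
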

 
 The following is the counterpart of Lemma~\ref{lem:long-exact-seq-ext} for Tor-modules.
 
\begin{lemma}\cite[Theorem~10.73]{Rot09}\label{lem:long-exact-seq-tor}
	Set $ S := R/(f) $, where $ f $ is an $ R $-regular element. Let $ M $ and $ N $ be $ S $-modules. Then we have the following long exact sequence:
	\begin{align*}
		\cdots \lra  \Tor_{i}^S(M,N) \lra & \Tor_{i+1}^{R}(M,N) \lra \Tor_{i+1}^S(M,N) \lra \\
		\Tor_{i-1}^S(M,N) \lra & \Tor_{i}^{R}(M,N) \lra \Tor_{i}^S(M,N) \lra \\
		& \quad \quad \vdots \\
		\Tor_0^S(M,N) \lra & \Tor_1^{R}(M,N) \lra \Tor_1^S(M,N) \lra 0.
	\end{align*}
\end{lemma}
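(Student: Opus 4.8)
The plan is to derive this as a formal consequence of the change-of-rings spectral sequence for $\Tor$ attached to the surjection $R \twoheadrightarrow S$; the argument runs exactly parallel to the one underlying the $\Ext$-version recorded in Lemma~\ref{lem:long-exact-seq-ext}. First I would record the input coming from $S$. Since $f$ is $R$-regular, $0 \lra R \xrightarrow{\,f\,} R \lra S \lra 0$ is a free resolution of $S$ over $R$, and since $N$ is an $S$-module the element $f$ acts as $0$ on $N$; tensoring this resolution with $N$ over $R$ therefore gives the complex $N \xrightarrow{\,0\,} N$. Hence $\Tor^R_0(S,N) \cong N$, $\Tor^R_1(S,N) \cong N$, and $\Tor^R_q(S,N) = 0$ for all $q \ge 2$.

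Next I would invoke the change-of-rings spectral sequence
$$E^2_{p,q} = \Tor^S_p\bigl(M, \Tor^R_q(S,N)\bigr) \ \Longrightarrow\ \Tor^R_{p+q}(M,N).$$
Concretely it is built from a double complex: take an $S$-free resolution $G_\bullet \lra M$, resolve each term $G_i$ over $R$ by a direct sum of copies of the length-one resolution of $S$ above, totalize to obtain an $R$-free resolution of $M$, and tensor the whole double complex with $N$ over $R$; one of the two filtration spectral sequences computes $\Tor^R_\bullet(M,N)$, and the other produces the $E^2$-page displayed. By the computation of $\Tor^R_\bullet(S,N)$ just made, this $E^2$-page is concentrated in the two adjacent rows $q = 0$ and $q = 1$, and in each of them $E^2_{p,0} = E^2_{p,1} = \Tor^S_p(M,N)$.

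Finally I would extract the long exact sequence from this two-row spectral sequence in the standard way. The only differential that can be nonzero is $d^2_{p,0} \colon E^2_{p,0} \to E^2_{p-2,1}$, so $E^3 = E^\infty$, and the abutment fits into short exact sequences $0 \to E^\infty_{n-1,1} \to \Tor^R_n(M,N) \to E^\infty_{n,0} \to 0$ with $E^\infty_{n,0} = \ker\bigl(d^2 \colon \Tor^S_n(M,N) \to \Tor^S_{n-2}(M,N)\bigr)$ and $E^\infty_{n-1,1} = \operatorname{coker}\bigl(d^2 \colon \Tor^S_{n+1}(M,N) \to \Tor^S_{n-1}(M,N)\bigr)$. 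Splicing these short exact sequences along the $d^2$-maps produces precisely the asserted long exact sequence, with the maps $\Tor^S_{i+1}(M,N) \to \Tor^S_{i-1}(M,N)$ (those lowering homological degree by two) equal to $d^2$ and the remaining two maps in each period equal to the base-change and edge homomorphisms; the tail $\Tor^S_0(M,N) \to \Tor^R_1(M,N) \to \Tor^S_1(M,N) \to 0$ is the instance $n = 1$, where $E^2_{1,-1} = 0$ forces surjectivity of the right-hand map.

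I do not expect a genuine obstacle here: the whole argument is formal once one has the vanishing $\Tor^R_{\ge 2}(S,N) = 0$, which is immediate from $\projdim_R(S) = 1$ together with $fN = 0$. The only point demanding care is the bookkeeping --- translating the spectral-sequence indexing into the display in the statement and checking that the three families of connecting maps are the natural (base-change and edge) ones.
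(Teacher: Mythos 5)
Your argument is correct and is essentially the paper's own route: the paper gives no proof, simply citing Rotman's Theorem~10.73, which is exactly the two-row change-of-rings spectral sequence derivation you reproduce, using $\Tor_0^R(S,N)\cong N\cong\Tor_1^R(S,N)$ and $\Tor_q^R(S,N)=0$ for $q\ge 2$ since $fN=0$ and $\projdim_R(S)=1$. The only cosmetic slip is at the tail, where the vanishing term forcing surjectivity is $E^2_{-1,1}$ (and the incoming $E^2_{3,-1}$), not $E^2_{1,-1}$.
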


Here we collect a few well-known facts about canonical modules for later use.

\begin{proposition}\label{prop:basic-facts-can-mod}
	Let $ R $ be a CM local ring.
	
	{\rm (i)} Let $ R $ be complete. Then $ R $ has a canonical module $ \omega_R $ {\rm (}cf. \cite[3.3.8]{BH98}{\rm )}. Moreover, every MCM $ R $-module $ M $ of finite injective dimension can be expressed as $ M \cong \omega_R^r $ for some $ r \ge 1 $; see \cite[Corollary~21.14]{Eis95}.
	
	{\rm (ii)} Let $ M $ be a CM $ R $-module, and $ \omega_R $ be a canonical module of $ R $. Then $ \Ext_R^i(M,\omega_R) = 0 $ for all $ i \neq \dim(R) - \dim(M) $; see, e.g., \cite[3.3.10]{BH98}.
	
	{\rm (iii)} Set $ R' := R/(f_1,\ldots,f_c) $, where $ f_1,\ldots,f_c $ is an $ R $-regular sequence. Suppose that $ R $ has a canonical module $ \omega_R $. Then $ R' $ also has a canonical module $ \omega_{R'} $, and $ \omega_{R'} \cong \omega_R/(f_1,\ldots,f_c)\omega_R $ {\rm (}cf. \cite[3.3.5(a)]{BH98}{\rm )}. Note that $ \injdim_R(\omega_R) $ is finite {\rm (}by definition of canonical modules{\rm )}. Using induction on $ c $, one can prove that $ \injdim_R(\omega_{R'}) $ is finite.
\end{proposition}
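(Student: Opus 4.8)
The plan is to treat the three items separately; items (i) and (ii) I would dispatch by quoting the indicated references, so the only part carrying real content is (iii), which I would establish by a short induction on $c$. For (i), the existence of a canonical module of a complete CM local ring is \cite[3.3.8]{BH98}; for the second assertion, note that a nonzero MCM module $M$ with $\injdim_R(M)<\infty$ satisfies $\injdim_R(M)=\depth(R)=d$ by the Bass formula, so $M$ is a maximal Cohen-Macaulay module of finite injective dimension over a CM local ring admitting a canonical module, and such modules are exactly the finite direct sums of copies of $\omega_R$ by \cite[Corollary~21.14]{Eis95}. For (ii), this is \cite[3.3.10]{BH98}: via local duality $\Ext_R^i(M,\omega_R)$ is dual to $H^{d-i}_{\fm}(M)$, and $H^j_{\fm}(M)=0$ for $j\neq\dim(M)$ because $M$ is CM, which forces $\Ext_R^i(M,\omega_R)=0$ for $i\neq d-\dim(M)$.

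For the isomorphism in (iii) I would induct on $c$ using \cite[3.3.5(a)]{BH98}: if $A$ is CM local with a canonical module $\omega_A$ and $g$ is an $A$-regular element, then $g$ is $\omega_A$-regular and $\omega_A/g\omega_A$ is a canonical module of $A/(g)$. Applying this in turn to $A=R/(f_1,\ldots,f_j)$ and $g$ the image of $f_{j+1}$, for $j=0,\ldots,c-1$, shows that $R/(f_1,\ldots,f_j)$ has canonical module $\omega_R/(f_1,\ldots,f_j)\omega_R$ for every $0\le j\le c$; in particular $\omega_{R'}\cong\omega_R/(f_1,\ldots,f_c)\omega_R$, and along the way we learn that $f_1,\ldots,f_c$ is a regular sequence on $\omega_R$.

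For the finiteness of $\injdim_R(\omega_{R'})$ I would again induct on $c$: the case $c=0$ is the definition of a canonical module. For the inductive step, set $L:=\omega_R/(f_1,\ldots,f_{c-1})\omega_R$, so that $\injdim_R(L)<\infty$ by the inductive hypothesis and $f_c$ is $L$-regular by the previous paragraph; then apply $\Hom_R(N,-)$ to
\[
0\lra L\xrightarrow{\,f_c\,}L\lra L/f_cL\lra 0.
\]
For $i>\injdim_R(L)$ one has $\Ext_R^i(N,L)=0=\Ext_R^{i+1}(N,L)$, so the resulting long exact sequence forces $\Ext_R^i(N,L/f_cL)=0$ for all $R$-modules $N$; hence $\injdim_R(L/f_cL)\le\injdim_R(L)<\infty$, and $L/f_cL\cong\omega_{R'}$ by the previous paragraph. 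As for difficulty: I do not expect a genuine obstacle, since everything rests on standard properties of canonical modules and elementary long exact sequences; the only point needing care is the regular-sequence bookkeeping in (iii), namely that a full $R$-regular sequence stays regular on $\omega_R$ and on its successive quotients, which is exactly what validates both inductions.
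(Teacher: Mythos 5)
Your proposal is correct and follows essentially the same route as the paper: items (i) and (ii) and the isomorphism in (iii) are handled by the very references the paper cites, and the finiteness of $\injdim_R(\omega_{R'})$ is proved by the induction on $c$ that the paper only sketches, your long-exact-sequence step (using that $f_c$ is regular on $\omega_R/(f_1,\ldots,f_{c-1})\omega_R$ by \cite[3.3.5(a)]{BH98}) being the intended argument. No gaps.
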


\begin{para}\label{P} Let $M$ be an $R$-module. For each $i \ge 0$, let $
	\beta_i(M) := \rank_k \left( \Tor^R_i(M, k) \right) $ be the $i$th \emph{Betti number} of $ M $. Set $ P_M(z) := \sum_{n \ge 0}\beta_n(M) z^n$, the \emph{Poincar\'{e} series} of $M$. The {\it complexity} of $ M $ is defined to be
	\begin{equation*}
		\cx_R(M) := \inf \left\{ b \in \mathbb{Z}_{\ge 0} \; \Big| \; \limsup_{n \to \infty} \frac{\beta_n(M)}{n^{b-1}}  < \infty \right\}.
	\end{equation*}
	It is possible that $\cx_R(M) = \infty$. For an $R$-module $M$, we have $ \cx_R(M) \le \cx_R(k) $; see \cite[4.2.4]{Avr98}. If $R$ is a complete intersection ring of codimension $c$, then it follows from \cite[Theorem~6]{Tat57} that $\cx_R(k) = c$. Furthermore, for each $i = 0,\ldots,c$, there exists an $R$-module $M_i$ such that $\cx_R(M_i) = i$.
 \end{para}
 
 \begin{para}
 	For a local ring $ (R,\mathfrak{m},k) $, Serre showed a coefficientwise inequality
 	\[
 		P_k(z) \preccurlyeq \dfrac{(1+z)^{\mu(\mathfrak{m})}}{1 - \sum_{j=1}^{\infty} \rank_k\big( H_j(K_{\bullet}) \big) z^{j+1}}
 	\]
 	of formal power series, where $ K_{\bullet} $ is the Koszul complex on a minimal set of generators of $ \mathfrak{m} $. If equality holds, then $ R $ is said to be a {\it Golod ring}. 
 \end{para}

\section{Behaviour of an Ulrich module as test module} \label{sec:Ulrich-mod-is-test-mod}
 
Here we study Ulrich modules. We start with the following theorem, which shows that every Ulrich module behaves like a test module that detects the finiteness of homological dimensions for MCM modules.
 
 \begin{theorem}\label{thm:Ulrich-mod-is-test-mod-for-mcm-mods}
 	Let $ M $ be an Ulrich $ R $-module, and $ N $ be an MCM $ R $-module.
 	\begin{enumerate}[{\rm (i)}]
 		\item If $ \Ext_R^i(M,N) = 0 $ for some $ (d+1) $ consecutive values of $ i \ge 1 $, then $ \injdim_R(N) $ is finite.
 		\item If $ \Ext_R^i(N,M) = 0 $ for some $ (d+1) $ consecutive values of $ i \ge 1 $, then $ N $ is free.
 		\item If $ \Tor_i^R(M,N) = 0 $ for some $ (d+1) $ consecutive values of $ i \ge 1 $, then $ N $ is free.
 	\end{enumerate}
 \end{theorem}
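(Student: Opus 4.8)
The plan is to prove all three parts together by induction on $ d = \dim R $, using \ref{para:res-field-infinite} to assume $ k $ is infinite and cutting the dimension down one regular element at a time. \emph{Base case $ d = 0 $:} here $ R $ is Artinian and an Ulrich module $ M $ has $ \fm M = 0 $ (its defining regular sequence is empty), so $ M \cong k^{\mu(M)} $; also $ d+1 = 1 $, so in each part we are handed a single vanishing. For (i), $ \Ext_R^i(k,N) = 0 $ for one $ i \ge 1 $ forces $ \injdim_R(N) < \infty $ over an Artinian ring, since once a Bass number of $ N $ vanishes the minimal injective resolution stops. For (ii) and (iii) the hypothesis reads $ \beta_i(N) = 0 $, so $ \projdim_R(N) < i < \infty $, and then Auslander--Buchsbaum gives $ \projdim_R(N) = \depth R - \depth N = 0 $, so $ N $ is free.

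\emph{Inductive step $ d \ge 1 $:} since $ k $ is infinite I would choose $ x \in \fm $ superficial for the module $ R \oplus M \oplus N $; being superficial for a direct sum it is in particular $ M $-superficial, and since $ \depth(R \oplus M \oplus N) \ge 1 $ it is also $ R \oplus M \oplus N $-regular. Put $ \oR := R/(x) $ and $ \overline{(-)} := (-) \otimes_R \oR $. Then $ \oR $ is CM of dimension $ d-1 $; by Lemma~\ref{lem:min-mult-mod-x}(ii) the $ \oR $-module $ \oM $ is Ulrich, in particular $ \oM \ne 0 $; and $ \oN $ is MCM over $ \oR $ because $ x $ is $ N $-regular. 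Applying Lemma~\ref{lem:Ext-Tor-mod-x} with $ m = d $ turns the prescribed $ d+1 $ consecutive vanishings over $ R $ (of $ \Ext_R^i(M,N) $ in (i), of $ \Ext_R^i(N,M) $ in (ii), of $ \Tor_i^R(M,N) $ in (iii)) into $ d = (d-1)+1 $ consecutive vanishings of the corresponding $ \oR $-modules, still in degrees $ \ge 1 $. The induction hypothesis over $ \oR $ then yields $ \injdim_{\oR}(\oN) < \infty $ in case (i), and $ \oN $ free over $ \oR $ in cases (ii) and (iii).

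It then remains to descend along $ x $, using that $ x $ is regular on both $ R $ and $ N $. In case (i) this is the standard equivalence $ \injdim_R(N) < \infty $ if and only if $ \injdim_{\oR}(N/xN) < \infty $, so $ \injdim_R(N) < \infty $. In cases (ii) and (iii), freeness of $ N $ follows from freeness of $ \oN $ by the usual lifting argument: lift an $ \oR $-basis of $ \oN $ to elements $ e_1,\dots,e_t \in N $, which generate $ N $ by Nakayama, giving $ 0 \to K \to R^t \to N \to 0 $; since $ \Tor_1^R(N,\oR) \cong (0 :_N x) = 0 $, tensoring with $ \oR $ stays exact, so the induced surjection $ \oR^t \twoheadrightarrow \oN $ of free $ \oR $-modules of the same rank is an isomorphism, whence $ K/xK = 0 $ and $ K = 0 $ by Nakayama. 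Thus $ N \cong R^t $ is free, completing the induction.

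I do not expect a genuinely hard step: the argument is a careful induction rather than a clever trick. The two points needing attention are the choice of $ x $ — simultaneously $ M $-superficial and regular on $ R \oplus M \oplus N $, available because $ k $ is infinite and $ R $, $ M $, $ N $ all have positive depth once $ d \ge 1 $ — and the appeal to Lemma~\ref{lem:min-mult-mod-x}(ii), which keeps $ \oM $ Ulrich (hence nonzero) so that the dimension induction can proceed; together with the identification $ M \cong k^{\mu(M)} $ in the base case, this is where the Ulrich hypothesis, as opposed to merely ``$ M $ is MCM'', is genuinely used.
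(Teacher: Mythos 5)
Your proposal is correct and follows essentially the same argument as the paper: induction on $d$, the base case via $\fm M = 0$ so that $M$ is a nonzero $k$-vector space, and the inductive step via an $R\oplus M\oplus N$-superficial (hence regular) element together with Lemmas~\ref{lem:min-mult-mod-x}(ii) and \ref{lem:Ext-Tor-mod-x}. The only cosmetic difference is at the end of (ii) and (iii), where the paper concludes $\projdim_R(N)<\infty$ from $\projdim_{\oR}(\oN)<\infty$ and invokes Auslander--Buchsbaum for an MCM module, while you lift freeness of $\oN$ directly by a Nakayama/Tor argument; both are standard and equivalent here.
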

 
 \begin{proof}
 	We prove this theorem by using induction on $ d $. Let us first consider the base case $ d = 0 $. In this case, since $ M $ is Ulrich, we have $ \fm M = 0 $, i.e., $ M $ is a non-zero $ k $-vector space. Therefore $ \Ext_R^i(M,N) = 0 $ for some $ i \ge 1 $ yields that $ \Ext_R^i(k,N) = 0 $ for some $ i \ge 1 $, which implies that $ \injdim_R(N) $ is finite. For (ii) and (iii), $ \Ext_R^i(N,M) = 0 $ or $ \Tor_i^R(M,N) = 0 $ for some $ i \ge 1 $ yields that $ \Ext_R^i(N,k) = 0 $ or $ \Tor_i^R(k,N) = 0 $ for some $ i \ge 1 $, which implies that $ \projdim_R(N) $ is finite. Since $ R $ is Artinian, we obtain that $ N $ is free.
 	
 	We now give the inductive step. Assume that $ d \ge 1 $. In view of \ref{para:res-field-infinite}, we may as well assume that the residue field $ k $ is infinite. Hence there exists an $ R \oplus M \oplus N $-superficial element $ x $. Since $ \depth(R \oplus M \oplus N) = d \ge 1 $, we obtain that $ x $ is $ (R \oplus M \oplus N) $-regular. Set $\overline{(-)} := (-) \otimes_R R/(x)$. Clearly, $ \oR $ is a CM local ring of dimension $ d - 1 $, and $ \oN $ is an MCM $ \oR $-module. Moreover, $ \overline{M} $ is an Ulrich $ \overline{R} $-module by Lemma~\ref{lem:min-mult-mod-x}(ii). In view of Lemma~\ref{lem:Ext-Tor-mod-x}, since $ \Ext_R^i(M,N) = 0 $ for some $ (d+1) $ consecutive values of $ i \ge 1 $, we get that $ \Ext_{\oR}^i(\oM,\oN) = 0 $ for some $ d $ $ ( = \dim(\oR) + 1) $ consecutive values of $ i \ge 1 $. Therefore, by the induction hypothesis, we obtain that $ \injdim_{\oR}(\oN) $ is finite, which implies that $ \injdim_R(N) $ is finite. For (ii) and (iii), $ \Ext_R^i(N,M) = 0 $ (resp. $ \Tor_i^R(M,N) = 0 $) for some $ (d+1) $ consecutive values of $ i \ge 1 $ yields that $ \Ext_{\oR}^i(\oN,\oM) = 0 $ (resp. $ \Tor_i^{\oR}(\oM,\oN) = 0 $) for some $ d $ $ ( = \dim(\oR) + 1) $ consecutive values of $ i \ge 1 $. In both cases, by the induction hypothesis, we get that $ \projdim_{\oR}(\oN) $ is finite, and hence $ \projdim_R(N) $ is finite, which gives that $ N $ is free as $ N $ is MCM.
 \end{proof}
 
 As an immediate corollary of Theorem~\ref{thm:Ulrich-mod-is-test-mod-for-mcm-mods}(i), we obtain a characterization of Gorenstein local rings provided there exists an Ulrich module. The reader may compare this result with \cite[Theorems~2.2 and 2.4]{JL07}.
 
 \begin{corollary}\label{cor:charac-Gor-via-Ulrich-mod}
 	Let $ M $ be an Ulrich $ R $-module. Then $ R $ is Gorenstein if and only if $ \Ext_R^i(M,R) = 0 $ for some $ (d+1) $ consecutive values of $ i \ge 1 $.
 \end{corollary}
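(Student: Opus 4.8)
The plan is to read off both implications from Theorem~\ref{thm:Ulrich-mod-is-test-mod-for-mcm-mods}(i) and Proposition~\ref{prop:basic-facts-can-mod}(ii), applied with the module $ R $ itself in the role of $ N $. The only preliminary remark needed is that $ R $, regarded as a module over itself, is an MCM $ R $-module, since $ \depth R = \dim R = d $ as $ R $ is CM; hence $ R $ is an admissible choice for the module $ N $ in Theorem~\ref{thm:Ulrich-mod-is-test-mod-for-mcm-mods}(i).

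For the ``if'' direction, suppose $ \Ext_R^i(M,R) = 0 $ for some $ (d+1) $ consecutive values of $ i \ge 1 $. Since $ M $ is Ulrich and $ R $ is an MCM $ R $-module, Theorem~\ref{thm:Ulrich-mod-is-test-mod-for-mcm-mods}(i) gives that $ \injdim_R(R) $ is finite, which is exactly the assertion that $ R $ is Gorenstein.

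For the converse, assume $ R $ is Gorenstein, so that $ R $ is its own canonical module, i.e. $ \omega_R \cong R $. An Ulrich module $ M $ is in particular a CM $ R $-module with $ \dim M = \dim R $, so Proposition~\ref{prop:basic-facts-can-mod}(ii) yields $ \Ext_R^i(M,R) \cong \Ext_R^i(M,\omega_R) = 0 $ for all $ i \neq \dim R - \dim M = 0 $, i.e. for all $ i \ge 1 $. In particular the vanishing holds for $ d+1 $ consecutive values of $ i \ge 1 $, which closes the equivalence. The statement is thus an immediate corollary and presents no real obstacle; the only points worth a word of justification are why $ R $ qualifies as the test target in Theorem~\ref{thm:Ulrich-mod-is-test-mod-for-mcm-mods}(i), and the identification $ \omega_R \cong R $ over a Gorenstein local ring used in the converse.
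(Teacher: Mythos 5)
Your proof is correct and matches the paper's intended argument: the paper gives no separate proof, stating the corollary follows immediately from Theorem~\ref{thm:Ulrich-mod-is-test-mod-for-mcm-mods}(i), exactly as you deduce the ``if'' direction, while your converse via $\omega_R \cong R$ and Proposition~\ref{prop:basic-facts-can-mod}(ii) is the standard vanishing (equivalently, $\injdim_R(R)=d<\infty$ already gives $\Ext_R^i(M,R)=0$ for $i\ge d+1$).
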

 
 As a consequence of Theorem~\ref{thm:Ulrich-mod-is-test-mod-for-mcm-mods}, we prove that Ulrich modules are Ext-test as well as Tor-test modules which detect the finiteness of projective dimension for arbitrary modules.
 
 \begin{corollary}\label{cor:Ulrich-mod-is-ext-tor-test-mod-projdim}
 	Suppose $ M $ and $ N $ are $ R $-modules, where $ M $ is Ulrich. Set $ t := \depth(N) $. Then the following statements hold true:
 	\begin{enumerate}[{\rm (i)}]
 		\item If $ \Ext_R^i(N,M) = 0 $ for some $ (d+1) $ consecutive values of $ i \ge d - t + 1 $, then $ \projdim_R(N) $ is finite.
 		\item If $ \Tor_i^R(N,M) = 0 $ for some $ (d+1) $ consecutive values of $ i \ge d - t + 1 $, then $ \projdim_R(N) $ is finite.
 	\end{enumerate}
 \end{corollary}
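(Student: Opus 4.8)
The plan is to reduce the statement, which concerns an arbitrary module $N$, to the MCM case already settled in Theorem~\ref{thm:Ulrich-mod-is-test-mod-for-mcm-mods}, by replacing $N$ with a sufficiently high syzygy. First I would dispose of trivialities: since $\depth(N) \le \dim(N) \le d$, we have $d - t \ge 0$, and we may assume $N \neq 0$ and $\projdim_R(N) = \infty$ (otherwise there is nothing to prove). Then set $N' := \Omega_{d-t}^R(N)$.

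The key structural input is that $N'$ is a non-free MCM $R$-module. That $N'$ is MCM follows by applying the depth lemma to the short exact sequences $0 \to \Omega_{j+1}^R(N) \to F_j \to \Omega_j^R(N) \to 0$ arising from a minimal free resolution $F_\bullet$ of $N$: since each $F_j$ has depth $d$, induction on $j$ yields $\depth\big(\Omega_j^R(N)\big) \ge \min\{d,\, t + j\}$, so $\depth(N') \ge d$, forcing $\depth(N') = d$. And $N'$ is not free, because $\projdim_R(N) = \infty$ precludes $\Omega_{d-t}^R(N)$ being free (or zero).

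Next I would transport the vanishing hypothesis along syzygies. By Lemma~\ref{lem:syz-ext-tor}, for all $i \ge 1$ we have $\Ext_R^i(N', M) \cong \Ext_R^{i+(d-t)}(N, M)$ and $\Tor_i^R(N', M) \cong \Tor_{i+(d-t)}^R(N, M)$. Writing the given $(d+1)$ consecutive vanishing values of $\Ext_R^i(N,M)$ (resp. $\Tor_i^R(N,M)$) as $i \in \{a, a+1, \ldots, a+d\}$ with $a \ge d - t + 1$, and subtracting $d - t$, we obtain $(d+1)$ consecutive vanishing values $i' \in \{a-(d-t), \ldots, a-(d-t)+d\}$ of $\Ext_R^{i'}(N', M)$ (resp. $\Tor_{i'}^R(N', M)$); the bound $a \ge d - t + 1$ guarantees $a - (d-t) \ge 1$, so these $i'$ all lie in the range $i' \ge 1$. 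This is exactly the hypothesis of Theorem~\ref{thm:Ulrich-mod-is-test-mod-for-mcm-mods}(ii) (resp. (iii)) applied to the Ulrich module $M$ and the MCM module $N'$, which then forces $N'$ to be free --- contradicting the previous paragraph. Hence $\projdim_R(N) < \infty$, and in fact $\projdim_R(N) \le d - t$.

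Once Theorem~\ref{thm:Ulrich-mod-is-test-mod-for-mcm-mods} is available, the argument is essentially bookkeeping; the only points requiring care are the depth estimate showing $\Omega_{d-t}^R(N)$ is MCM, and ensuring the index shift by $d-t$ keeps the shifted vanishing range inside $\{i' \ge 1\}$ with $d+1$ consecutive values --- which is precisely the reason the hypothesis is stated with the threshold $i \ge d - t + 1$. I do not anticipate a genuine obstacle beyond these routine checks.
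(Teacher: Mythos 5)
Your argument is correct and is essentially the paper's own proof: pass to $N' = \Omega_{d-t}^R(N)$, verify it is MCM via the Depth Lemma, shift the vanishing range using Lemma~\ref{lem:syz-ext-tor}, and apply Theorem~\ref{thm:Ulrich-mod-is-test-mod-for-mcm-mods}(ii) and (iii). The only cosmetic difference is that you phrase the conclusion as a contradiction with an assumed infinite projective dimension, whereas the paper simply concludes directly that $\Omega_{d-t}^R(N)$ is free and hence $\projdim_R(N)$ is finite.
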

 
 \begin{proof}
 	For a short exact sequence $ 0 \to U \to V \to W \to 0 $ of $ R $-modules, by virtue of the Depth Lemma, we have $ \depth(U) \ge \min\{ \depth(V), \depth(W) + 1 \} $. Using this fact, one can prove that $ \Omega_{d-t}^R(N) $ is an MCM $ R $-module. In view of Lemma~\ref{lem:syz-ext-tor}, we get that
 	\begin{align*}
	 	\Ext_R^i(N,M) & \cong \Ext_R^{i-(d-t)} \left( \Omega_{d-t}^R(N), M \right) \quad \mbox{and }\\
	 	\Tor_i^R(N,M) & \cong \Tor_{i-(d-t)}^R \left( \Omega_{d-t}^R(N), M \right)
	\end{align*}
	for all $ i \ge d - t + 1 $. Therefore, from the hypothesis of (i) (resp. (ii)), we obtain that $ \Ext_R^j\left( \Omega_{d-t}^R(N), M \right) = 0 $ (resp. $ \Tor_j^R \left( \Omega_{d-t}^R(N), M \right) = 0 $) for some $ (d+1) $ consecutive values of $ j \ge 1 $. Hence, in either case, it follows from Theorem~\ref{thm:Ulrich-mod-is-test-mod-for-mcm-mods} that $ \Omega_{d-t}^R(N) $ is free, and hence $ \projdim_R(N) $ is finite.
 \end{proof}
 
 The following corollary shows that Ulrich modules are Ext-test modules which detect the finiteness of injective dimension for arbitrary modules.
 
 \begin{corollary}\label{cor:Ulrich-mod-is-ext-tor-test-mod-injdim}
 	Let $ M $ and $ N $ be $ R $-modules, where $ M $ is Ulrich. Let $ \Ext_R^i(M,N) = 0 $ for some $ (d+1) $ consecutive values of $ i \ge 1 $. Then $ \injdim_R(N) $ is finite.
 \end{corollary}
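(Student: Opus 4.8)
The plan is to mimic the reduction used for Corollary~\ref{cor:Ulrich-mod-is-ext-tor-test-mod-projdim}, but since here $ N $ sits in the second argument of $ \Ext $ one cannot simply pass to a syzygy --- a cosyzygy of $ N $ need not be finitely generated --- so instead I would replace $ N $ by a maximal Cohen-Macaulay module via a Cohen-Macaulay approximation and then invoke Theorem~\ref{thm:Ulrich-mod-is-test-mod-for-mcm-mods}(i). First pass to the completion: $ \widehat{M} $ is again Ulrich over $ \widehat{R} $, one has $ \Ext_{\widehat{R}}^i(\widehat{M},\widehat{N}) \cong \Ext_R^i(M,N)\otimes_R\widehat{R} $ so the hypothesis survives, and $ \injdim_R(N) $ is finite precisely when $ \injdim_{\widehat{R}}(\widehat{N}) $ is. Hence we may assume $ R $ is complete, so that it admits a canonical module $ \omega_R $ by Proposition~\ref{prop:basic-facts-can-mod}(i); by the Cohen-Macaulay approximation theorem of Auslander and Buchweitz there is then a short exact sequence $ 0 \lra Y \lra X \lra N \lra 0 $ with $ X $ an MCM $ R $-module and $ \injdim_R(Y) $ finite.

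The crux is to prove $ \Ext_R^i(M,Y) = 0 $ for all $ i \ge 1 $. Since $ \injdim_R(Y) $ is finite, $ Y $ has a finite resolution $ 0 \lra \omega_R^{a_t} \lra \cdots \lra \omega_R^{a_1} \lra \omega_R^{a_0} \lra Y \lra 0 $ by finite direct sums of $ \omega_R $ --- the standard structural description of modules of finite injective dimension over a complete Cohen-Macaulay local ring (via Foxby equivalence, i.e.\ the Bass class of $ \omega_R $; it can also be deduced directly). Splitting this resolution into short exact sequences and using that $ \Ext_R^i(M,\omega_R) = 0 $ for all $ i \ge 1 $ --- which holds because $ M $ is MCM, by Proposition~\ref{prop:basic-facts-can-mod}(ii) --- a dimension shift yields $ \Ext_R^i(M,Y) \cong \Ext_R^{i+t}(M,\omega_R^{a_t}) = 0 $ for every $ i \ge 1 $. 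Feeding this into the long exact sequence of $ \Ext_R(M,-) $ associated to $ 0 \lra Y \lra X \lra N \lra 0 $ gives isomorphisms $ \Ext_R^i(M,X) \cong \Ext_R^i(M,N) $ for all $ i \ge 1 $.

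Therefore $ \Ext_R^i(M,X) = 0 $ for the same $ (d+1) $ consecutive values of $ i \ge 1 $; as $ X $ is MCM and $ M $ is Ulrich, Theorem~\ref{thm:Ulrich-mod-is-test-mod-for-mcm-mods}(i) shows $ \injdim_R(X) $ is finite. With both $ \injdim_R(X) $ and $ \injdim_R(Y) $ finite, the short exact sequence $ 0 \lra Y \lra X \lra N \lra 0 $ forces $ \injdim_R(N) $ to be finite, and descent along $ R \to \widehat{R} $ concludes the proof. I expect the main obstacle to be exactly the vanishing $ \Ext_R^i(M,Y) = 0 $ for $ i \ge 1 $: this is the one genuinely non-formal input, it is where the maximal Cohen-Macaulay property of $ M $ enters essentially through the canonical module, and it relies on the structure theory of finite-injective-dimension modules rather than on a routine long-exact-sequence manipulation.
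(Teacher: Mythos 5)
Your proposal is correct and follows essentially the same route as the paper: complete $R$, take an Auslander--Buchweitz MCM approximation $0 \to Y \to X \to N \to 0$, show $\Ext_R^i(M,Y)=0$ for $i\ge 1$ to transfer the vanishing from $N$ to $X$, and then apply Theorem~\ref{thm:Ulrich-mod-is-test-mod-for-mcm-mods}(i). The only difference is that the paper simply cites the standard fact (\cite[3.1.24]{BH98}) that $\Ext_R^{\ge 1}(M,Y)=0$ when $M$ is MCM and $\injdim_R(Y)<\infty$, whereas you reprove it via the finite $\omega_R$-resolution of $Y$ and dimension shifting, which is also fine.
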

 
 \begin{proof}
 	We may assume that $R$ is complete.	In view of \cite[Theorem~A]{AB89}, we may consider an MCM approximation of $N$, i.e., an exact sequence $ 0 \rightarrow Y \rightarrow X \rightarrow N \rightarrow 0 $ of $ R $-modules, where $X$ is MCM and $Y$ has finite injective dimension. Since $ M $ is MCM, we have that $ \Ext^i_R(M,Y) = 0$ for every $ i \ge 1 $; see, e.g., \cite[3.1.24]{BH98}. Therefore $ \Ext^i_R(M,X) \cong \Ext^i_R(M,N) = 0 $ for some $ (d+1) $ consecutive values of $ i \ge 1 $. Since $ X $ is MCM, it follows from Theorem~\ref{thm:Ulrich-mod-is-test-mod-for-mcm-mods}(i) that $\injdim_R(X)$ is finite, and hence $\injdim_R(N)$ is finite.
 \end{proof}
 
 \begin{remark}\label{olgur-1}
 	In particular, by virtue of Corollaries~\ref{cor:Ulrich-mod-is-ext-tor-test-mod-projdim} and \ref{cor:Ulrich-mod-is-ext-tor-test-mod-injdim}, Ulrich modules belong to the following subcategories of $ \Mod(R) $ studied in \cite{CDT14}:
 	\begin{align*}
 	T(R) & := \left\{ M : \mbox{every $ N $ with }\Tor_{\gg 0}^R(M,N) = 0 \mbox{ has } \projdim_R(N) < \infty \right\},\\
 	\EP(R) & := \left\{ M : \mbox{every $ N $ with }\Ext_R^{\gg 0}(N,M) = 0 \mbox{ has } \projdim_R(N) < \infty \right\} \mbox{ and}\\
 	\EI(R) & := \left\{ M : \mbox{every $ N $ with }\Ext_R^{\gg 0}(M,N) = 0 \mbox{ has } \injdim_R(N) < \infty \right\},
 	\end{align*}
 	where $ \Mod(R) $ denotes the category of all (finitely generated) $ R $-modules. Moreover, in view of \cite[Proposition~2.7]{CDT14}, if $ R $ is a local complete intersection ring, then every Ulrich $ R $-module $ M $ has maximal complexity, i.e., $ \cx_R(M) = \codim(R) $.
 \end{remark}

\section{Vanishing of Exts and Tors over CM local rings of minimal multiplicity} \label{sec:vanishing-ext-tor}
 
 In this section, we study the vanishing of Exts or Tors over CM local rings of minimal multiplicity. We need the following well-known lemma, which shows the existence of Ulrich modules provided the base ring has minimal multiplicity. It is essentially contained in \cite[2.5]{BHU87}.
 
 \begin{lemma}\label{lem:min-mult-syz-Ulrich-mods}
 	Let $ R $ be a CM local ring of minimal multiplicity. Let $ M $ be a non-free MCM $ R $-module. Then $ \Omega_n^R(M) $ is an Ulrich $ R $-module for every $ n \ge 1 $.
 \end{lemma}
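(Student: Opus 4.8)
The plan is to reduce at once to the case where the residue field $k$ is infinite, using \ref{para:res-field-infinite} (the Ulrich property of $\Omega_n^{R}(M)$ ascends to and descends from $R[X]_{\fm R[X]}$, since multiplicity, minimal number of generators, and syzygies are all compatible with this faithfully flat base change, cf.\ \cite[Section~2.1]{Gho}), and then to treat the essential case $n=1$ by an explicit computation; the general case will follow by peeling off one syzygy at a time. Since $R$ now has minimal multiplicity with $k$ infinite, I would fix an $R$-regular sequence $\ux = x_1,\dots,x_d$ with $\fm^2 = (\ux)\fm$ (cf.\ \cite[4.6.14(c)]{BH98}). Because $\fm^2 \subseteq (\ux)$, the ideal $(\ux)$ is $\fm$-primary, so $\ux$ is a system of parameters; as $R$ is Cohen--Macaulay, $\ux$ is then a maximal $R$-regular sequence and is a regular sequence on every MCM $R$-module.

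The heart of the argument is the claim that for an arbitrary non-free MCM $R$-module $N$, the module $L := \Omega_1^{R}(N)$ is Ulrich. Choose the beginning $0 \to L \to F \to N \to 0$ of a minimal free resolution, so $F = R^{\mu(N)}$ and $L \subseteq \fm F$. By the Depth Lemma $L$ is MCM, hence $\ux$ is $L$-regular; it is also $N$-regular, so $\Tor_1^{R}(R/(\ux),N)=0$, and tensoring the short exact sequence with $R/(\ux)$ yields $L \cap (\ux)F = (\ux)L$. Now
\[
  \fm L \;\subseteq\; \fm\cdot\fm F \;=\; \fm^2 F \;=\; (\ux)\fm F \;\subseteq\; (\ux)F,
\]
and trivially $\fm L \subseteq L$; therefore $\fm L \subseteq L \cap (\ux)F = (\ux)L \subseteq \fm L$, so $\fm L = (\ux)L$ with $\ux$ an $L$-regular sequence. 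By the characterization of Ulrich modules recalled in the introduction (\cite[Lemma~(1.3)]{BHU87}), $L$ is Ulrich.

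To finish, I would induct on $n$. The case $n=1$ is the claim applied to $N=M$. For $n \ge 2$ write $\Omega_n^{R}(M) = \Omega_1^{R}\big(\Omega_{n-1}^{R}(M)\big)$; each $\Omega_j^{R}(M)$ with $j \ge 1$ is MCM by the Depth Lemma, and each is non-free, for if some $\Omega_j^{R}(M)$ were free then $\projdim_R(M) \le j < \infty$, whence $\projdim_R(M)=0$ by Auslander--Buchsbaum (as $M$ is MCM over the Cohen--Macaulay ring $R$), contradicting that $M$ is non-free. Thus $\Omega_{n-1}^{R}(M)$ is non-free MCM and the claim applies to it, giving that $\Omega_n^{R}(M)$ is Ulrich. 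I do not anticipate a genuine obstacle: the only points needing care are the faithfully flat descent in the reduction step and keeping track of which regular sequence acts on which module; the one real input is the interplay of $\fm^2 = (\ux)\fm$ with the identity $L \cap (\ux)F = (\ux)L$.
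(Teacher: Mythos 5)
Your proof is correct, and it is essentially the standard argument the paper itself relies on: the paper gives no proof of this lemma but cites \cite[2.5]{BHU87}, where the same computation ($\Omega_1^R(N)\subseteq\fm F$, $\fm^2=(\ux)\fm$, and $L\cap(\ux)F=(\ux)L$ via vanishing of $\Tor_1$) is carried out. Your reduction to infinite residue field and the induction on $n$ via non-freeness of intermediate syzygies are the right bookkeeping and introduce no gaps.
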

 
 Let us fix the following hypothesis for the rest of this section.
 
 \begin{hypothesis}\label{hyp:min-mult-d-s-t}
 	Let $ (R,\fm,k) $ be a $ d $-dimensional CM local ring of minimal multiplicity. Let $ M $ and $ N $ be $ R $-modules. Set $ s := \depth(M) $ and $ t := \depth(N) $.
 \end{hypothesis}
 
 The following theorem particularly shows that over a CM local ring $ R $ of minimal multiplicity, $ \Tor_i^R(M,N) = 0 $ for all $ i \gg 0 $ if and only if either $ M $ or $ N $ has finite projective dimension. It should be noted that, in \cite[Theorem~1.9]{HW97}, Huneke and Wiegand showed this result when $ R $ is a hypersurface (i.e., a regular local ring modulo a regular element).
  
 \begin{theorem}\label{thm:min-mult-tor-test}
 	Along with {\rm Hypothesis~\ref{hyp:min-mult-d-s-t}}, further assume that $ \Tor_i^R(M,N) = 0 $ for some $ (d+1) $ consecutive values of $ i \ge 2 d - (s + t) + 2 $. Then either $ \projdim_R(M) $ or $ \projdim_R(N) $ is finite.
 \end{theorem}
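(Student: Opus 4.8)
The plan is to reduce, by passing to a high syzygy of $ M $, to a situation already handled by the Ulrich test-module results of Section~\ref{sec:Ulrich-mod-is-test-mod}. Since the conclusion is that $ \projdim_R(M) $ or $ \projdim_R(N) $ is finite, we may assume $ \projdim_R(M) = \infty $ and aim to show $ \projdim_R(N) < \infty $. As $ s = \depth(M) \le \dim(M) \le d $, the syzygy $ M' := \Omega_{d-s}^R(M) $ is defined; the Depth Lemma computation carried out in the proof of Corollary~\ref{cor:Ulrich-mod-is-ext-tor-test-mod-projdim} shows that $ M' $ is an MCM $ R $-module, and $ M' $ is non-free because $ \projdim_R(M) = \infty $ (a free syzygy would force finite projective dimension). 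Hence, by Lemma~\ref{lem:min-mult-syz-Ulrich-mods}, the module $ U := \Omega_1^R(M') = \Omega_{d-s+1}^R(M) $ is Ulrich.

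Next I transport the vanishing hypothesis onto the pair $ (N, U) $. By Lemma~\ref{lem:syz-ext-tor}(ii) we have $ \Tor_i^R(M,N) \cong \Tor_{i-(d-s+1)}^R(U,N) $ for every $ i \ge d-s+2 $. Since $ d \ge t $, the lower bound $ 2d-(s+t)+2 $ in the hypothesis satisfies $ 2d-(s+t)+2 \ge d-s+2 $, so the hypothesis translates into the vanishing of $ \Tor_j^R(U,N) $ for $ (d+1) $ consecutive values of $ j $, the least of which is at least $ 2d-(s+t)+2-(d-s+1) = d-t+1 $. Using $ \Tor_j^R(N,U) \cong \Tor_j^R(U,N) $ and $ t = \depth(N) $, this is exactly the hypothesis of Corollary~\ref{cor:Ulrich-mod-is-ext-tor-test-mod-projdim}(ii) applied to $ (N,U) $ with $ U $ Ulrich. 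That corollary then yields $ \projdim_R(N) < \infty $, completing the argument.

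Essentially everything here is bookkeeping once three inputs are in hand: the MCM-ification of $ M $ by taking a $ (d-s) $th syzygy, the fact (Lemma~\ref{lem:min-mult-syz-Ulrich-mods}) that a positive syzygy of a non-free MCM module over a ring of minimal multiplicity is Ulrich, and the Ulrich test-module statement of Theorem~\ref{thm:Ulrich-mod-is-test-mod-for-mcm-mods}, packaged for arbitrary modules as Corollary~\ref{cor:Ulrich-mod-is-ext-tor-test-mod-projdim}. The one point that genuinely needs care is the index arithmetic: one must verify that the prescribed bound $ 2d-(s+t)+2 $ on $ i $ is simultaneously large enough for the syzygy isomorphism $ \Tor_i^R(M,N) \cong \Tor_{i-(d-s+1)}^R(U,N) $ to hold throughout the full run of $ d+1 $ consecutive degrees and for the shifted run to land in the range $ i \ge d-t+1 $ demanded by Corollary~\ref{cor:Ulrich-mod-is-ext-tor-test-mod-projdim}(ii). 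Tracking these two constraints is exactly what pins down the bound $ 2d-(s+t)+2 $ in the statement, so I do not expect any conceptual obstacle beyond this computation.
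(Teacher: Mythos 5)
Your proposal is correct and follows essentially the same route as the paper: pass to $\Omega_{d-s+1}^R(M)$, which is Ulrich by Lemma~\ref{lem:min-mult-syz-Ulrich-mods}, shift the Tor-vanishing via Lemma~\ref{lem:syz-ext-tor}(ii), and invoke the Ulrich test-module results; your index bookkeeping ($2d-(s+t)+2-(d-s+1)=d-t+1$, using $t\le d$) checks out. The only cosmetic difference is that you cite Corollary~\ref{cor:Ulrich-mod-is-ext-tor-test-mod-projdim}(ii), whereas the paper repeats that corollary's syzygy-shift on $N$ inline and applies Theorem~\ref{thm:Ulrich-mod-is-test-mod-for-mcm-mods}(iii) directly.
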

 
 \begin{proof}
 	By virtue of Depth Lemma, one can prove that $ \Omega_{d-s}^R(M) $ and $ \Omega_{d-t}^R(N) $ are MCM $ R $-modules. If $ \projdim_R(M) $ is finite, then there is nothing to prove. So we may assume that $ \projdim_R(M) $ is infinite. Therefore $ \Omega_{d-s}^R(M) $ is a non-free MCM $ R $-module. Hence, in view of Lemma~\ref{lem:min-mult-syz-Ulrich-mods}, we have that $ \Omega_{d-s+1}^R(M) $ is an Ulrich $ R $-module. Applying Lemma~\ref{lem:syz-ext-tor}(ii) twice, we obtain that
 	\begin{align*}
	 	\Tor_i^R(M,N) &\cong \Tor_{i-(d-s+1)}^R\left(\Omega_{d-s+1}^R(M), N\right)\\
	 	& \cong \Tor_{i-(d-s+1)-(d-t)}^R\left( \Omega_{d-s+1}^R(M), \Omega_{d-t}^R(N) \right)
 	\end{align*}
 	for all $ i-(d-s+1)-(d-t) \ge 1 $, i.e., for all $ i \ge 2 d - (s + t) + 2 $. These isomorphisms, along with the hypotheses of the theorem, yield that
 	\[
	 	\Tor_j^R\left( \Omega_{d-s+1}^R(M), \Omega_{d-t}^R(N) \right) = 0
 	\]
 	for some $ (d+1) $ consecutive values of $ j \ge 1 $. It then follows from Theorem~\ref{thm:Ulrich-mod-is-test-mod-for-mcm-mods}(iii) that $ \Omega_{d-t}^R(N) $ is free, and hence $ \projdim_R(N) $ is finite.
 \end{proof}
 
 Here we give the counterpart of Theorem~\ref{thm:min-mult-tor-test} for Ext-modules.
 
 \begin{theorem}\label{thm:min-mult-ext-test}
 	Along with {\rm Hypothesis~\ref{hyp:min-mult-d-s-t}}, further assume that $ \Ext_R^i(M,N) = 0 $ for some $ (d+1) $ consecutive values of $ i \ge d - s + 2 $. Then either $ \projdim_R(M) $ or $ \injdim_R(N) $ is finite.
 \end{theorem}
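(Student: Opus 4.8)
The plan is to run the same argument used for Theorem~\ref{thm:min-mult-tor-test}, but with the injective-dimension test-module statement for \emph{arbitrary} second arguments, namely Corollary~\ref{cor:Ulrich-mod-is-ext-tor-test-mod-injdim}, playing the role that Theorem~\ref{thm:Ulrich-mod-is-test-mod-for-mcm-mods}(iii) played there. If $\projdim_R(M)$ is finite there is nothing to prove, so I assume from now on that $\projdim_R(M) = \infty$.

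First, by repeated application of the Depth Lemma one checks that $\Omega_{d-s}^R(M)$ is an MCM $R$-module; since $\projdim_R(M) = \infty$, this syzygy is moreover non-free. Hence Lemma~\ref{lem:min-mult-syz-Ulrich-mods}, applied to the non-free MCM module $\Omega_{d-s}^R(M)$, shows that $\Omega_{d-s+1}^R(M)$ is an Ulrich $R$-module. Next, Lemma~\ref{lem:syz-ext-tor}(i) gives $\Ext_R^i(M,N) \cong \Ext_R^{\,i-(d-s+1)}\big(\Omega_{d-s+1}^R(M),\,N\big)$ for every $i \ge d-s+2$, so under this index shift the assumed $(d+1)$ consecutive vanishing values of $i \ge d-s+2$ translate into $(d+1)$ consecutive values of $j \ge 1$ with $\Ext_R^j\big(\Omega_{d-s+1}^R(M),N\big) = 0$. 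Since $\Omega_{d-s+1}^R(M)$ is Ulrich, Corollary~\ref{cor:Ulrich-mod-is-ext-tor-test-mod-injdim} then forces $\injdim_R(N)$ to be finite, which completes the proof.

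I do not expect a genuine obstacle here: all the real content is already packaged in Section~\ref{sec:Ulrich-mod-is-test-mod}, in particular in Corollary~\ref{cor:Ulrich-mod-is-ext-tor-test-mod-injdim}, whose proof uses MCM approximations (via \cite[Theorem~A]{AB89}) to reduce an arbitrary module $N$ to the MCM case covered by Theorem~\ref{thm:Ulrich-mod-is-test-mod-for-mcm-mods}(i). The only step requiring a little care is the index bookkeeping, i.e.\ confirming that the threshold $i \ge d-s+2$ in the hypothesis is precisely what is needed so that, after the syzygy shift of size $d-s+1$, the vanishing range lands in $j \ge 1$ where Corollary~\ref{cor:Ulrich-mod-is-ext-tor-test-mod-injdim} can be invoked.
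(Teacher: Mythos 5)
Your proposal is correct and is essentially identical to the paper's own proof: reduce to the case $\projdim_R(M)=\infty$, note that $\Omega_{d-s+1}^R(M)$ is Ulrich by Lemma~\ref{lem:min-mult-syz-Ulrich-mods}, shift indices via Lemma~\ref{lem:syz-ext-tor}(i), and conclude with Corollary~\ref{cor:Ulrich-mod-is-ext-tor-test-mod-injdim}. The index bookkeeping ($i \ge d-s+2$ landing in $j \ge 1$ after the shift of $d-s+1$) is exactly as in the paper.
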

 
 \begin{proof}
 	If $ \projdim_R(M) $ is finite, then there is nothing to prove. So we may assume that $ \projdim_R(M) $ is infinite. Hence, as in the proof of Theorem~\ref{thm:min-mult-tor-test}, we get that $ \Omega_{d-s+1}^R(M) $
 	is an Ulrich $ R $-module. In view of Lemma~\ref{lem:syz-ext-tor}(i), we obtain that
 	\[
	 	\Ext_R^i(M,N) \cong \Ext_R^{i-(d-s+1)}\left( \Omega_{d-s+1}^R(M), N\right) 
 	\]
 	for all $ i-(d-s+1) \ge 1 $, i.e., for all $ i \ge d - s + 2 $.
 	These isomorphisms, along with the hypotheses of the theorem, provide that 
 	$ \Ext_R^j\left( \Omega_{d-s+1}^R(M), N\right) = 0 $ for some $ (d+1) $ consecutive values of $ j \ge 1 $. 
 	Therefore, by virtue of Corollary~\ref{cor:Ulrich-mod-is-ext-tor-test-mod-injdim}, we obtain that	$ \injdim_R(N) $ is finite.
 \end{proof}
 
 \begin{remark}\label{rmk:Saeed}
 	The authors thank Saeed Nasseh for informing them that analogous results of Theorems~\ref{thm:min-mult-tor-test} and \ref{thm:min-mult-ext-test} have been obtained in \cite[Corollaries~6.5 and 6.6]{NT}.
 	They proved these results over CM local rings with quasi-decomposable maximal ideal; see \cite[Definition~4.1]{NT}. In particular, a non-Gorenstein CM local ring with minimal multiplicity (and with infinite residue field) has quasi-decomposable maximal ideal (\cite[Example~4.7]{NT}). Although their results are more general than ours, but in the special case of rings of minimal multiplicity our results are more complete and proofs are more simple and elementary. Not only our results cover the case where $ R $ is Gorenstein with multiplicity $ e(R) = \mu(\mathfrak{m}) - d + 1 $, but also we consider the vanishing of Exts for $ i \ge d - s + 2 $, while they consider it for $ i \ge d - s + 5 $.
	For a Gorenstein local ring $ R $ with multiplicity $ e(R) = \mu(\mathfrak{m}) - d + 2 $ and $ \mu(\fm) > 2 $, it is shown in \cite[3.6 and 3.7]{HJ03} that $ \Ext^i_R(M,N) = 0 $ for all $ i \gg 0 $ if and only if either $ M $ or $ N $ has finite projective dimension.
 \end{remark}
 
 \begin{remark}\label{olgur-2}
 	If $ R $ is a CM local ring of minimal multiplicity, then $ R $ is Golod; see \cite[5.2.8]{Avr98}. It is shown in \cite[Proposition~1.4]{JS04} that for modules $ M $ and $ N $ over a Golod ring $ R $, if $ \Ext_R^i(M,N) = 0 $ for all $ i \gg 0 $, then either $ \projdim_R(M) $ or $ \injdim_R(N) $ is finite. The counterpart of this result for Tor-modules is obtained in \cite[Theorem~3.1]{Jor99a}. It should be noted that over CM local rings of minimal multiplicity, Theorems~\ref{thm:min-mult-tor-test} and \ref{thm:min-mult-ext-test} are considerably stronger which require only finitely many vanishing of Exts or Tors to detect finiteness of homological dimensions.
 \end{remark}

 The following example shows that Theorems~\ref{thm:min-mult-tor-test} and \ref{thm:min-mult-ext-test} are not necessarily true if $ R $ does not have minimal multiplicity.
 
 \begin{example}\label{exam:counter-example-if-R-is-not-min-mult}
 	Let $ R = k[X,Y]/(X^2,Y^2) $, where $ k $ is a field. Let $ x $ and $ y $ be the images of $ X $ and $ Y $ in $ R $ respectively. Then $ R $ is an Artinian local ring with the maximal ideal $ \fm := (x, y) $. Since $ \fm^2 \neq 0 $, $ R $ does not have minimal multiplicity. Set $ M := (x) $ and $ N := (y) $. Let $ E $ be the injective hull of $ k $ over $ R $. Set $ (-)^{\vee} := \Hom_R(-,E) $. Considering the minimal free resolution of $ M $:
 	\[
 		\cdots \stackrel{x}{\longrightarrow} R \stackrel{x}{\longrightarrow} R \stackrel{x}{\longrightarrow} R \to 0,
 	\]
 	one may compute that $ \Tor_i^R(M,N) = 0 $ for every $ i \ge 1 $. Hence $ \Ext_R^i(M,N^{\vee}) \cong \Tor_i^R(M,N)^{\vee} = 0 $ for every $ i \ge 1 $. Since $ M $ is annihilated by $ x $, it is not free. Similarly, $ N $ is not free. By Matlis Duality, it can be verified that $ N^{\vee} $ is not injective. 
 \end{example}
 
 The following well-known example shows that the number of consecutive vanishing of
 Tors (resp. Exts) in Theorem~\ref{thm:min-mult-tor-test} (resp. \ref{thm:min-mult-ext-test}) cannot be further reduced.
 
 \begin{example}\label{exam:rigidity-fails-for-min-mult}
 	Let $ k[[X,Y]] $ be a formal power series ring in two indeterminates $ X $ and $ Y $ over a field $ k $. Set $ R := k[[X,Y]]/(XY) $. Suppose $ x $ and $ y $ are the images of $ X $ and $ Y $ in $ R $ respectively. Set $ \fm := (x,y) $. Clearly, $ (R, \fm, k) $ is a CM local ring. It can be easily shown that $ e(R) = 2 $, $ \mu(\fm) = 2 $ and $ \dim(R) = 1 $. Therefore $ R $ has minimal multiplicity. Set $ M := (x) $, an ideal of $ R $. Note that $ M $ is an MCM $ R $-module. Considering the minimal free resolution of $ M $:
 	\[
	 	\cdots \stackrel{x\cdot}{\lra} R \stackrel{y\cdot}{\lra} R \stackrel{x\cdot}{\lra} R \stackrel{y\cdot}{\lra} R \lra 0,
 	\]
 	we can easily compute the following:
 	\begin{align*}
	 	& \Tor_{2i+1}^R(M,M) = (x)/(x^2) \neq 0 \quad \mbox{for all } i \ge 0,\\
	 	& \Tor_{2i}^R(M,M) = 0 \quad \mbox{for all } i \ge 1,\\
	 	& \Ext_R^{2i}(M,M) = (x)/(x^2) \neq 0 \quad \mbox{for all } i \ge 1 \quad \mbox{and} \\
	 	& \Ext_R^{2i+1}(M,M) = 0 \quad \mbox{for all } i \ge 0.
 	\end{align*}
 	Note that both $ \projdim_R(M) $ and $ \injdim_R(M) $ are infinite.
 \end{example}
 
 As a corollary of Theorems~\ref{thm:min-mult-tor-test} and \ref{thm:min-mult-ext-test}, we obtain a few necessary and sufficient conditions for a CM local ring of minimal multiplicity to be Gorenstein.
 
 \begin{corollary}\label{cor:Tachikawa-conj-min-mult}
 	Let $ (R,\fm,k) $ be a CM local ring of minimal multiplicity. Set $ d := \dim(R) $. Let $ \omega $ be a canonical module of $ R $. Then the following statements are equivalent:
 	\begin{enumerate}
 		\item[{\rm (i)}] $ R $ is Gorenstein.
 		\item[{\rm (ii)}] $ \Tor_i^R(\omega, \omega) = 0 $ for some $ (d+1) $ consecutive values of $ i \ge 2 $.
 		\item[{\rm (iii)}] $ \Ext_R^i(\omega, R) = 0 $ for some $ (d+1) $ consecutive values of $ i \ge 2 $.
 	\end{enumerate}
 \end{corollary}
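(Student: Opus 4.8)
The plan is to read off (ii)$\Rightarrow$(i) and (iii)$\Rightarrow$(i) as direct corollaries of Theorems~\ref{thm:min-mult-tor-test} and \ref{thm:min-mult-ext-test}, respectively, with the reverse implications being trivial. The one observation that makes everything fit is that a canonical module $\omega$ is a maximal Cohen--Macaulay $R$-module, so $\depth(\omega) = d$; this is precisely what will collapse the depth-dependent thresholds in those two theorems down to $2$.

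For (i)$\Rightarrow$(ii) and (i)$\Rightarrow$(iii): if $R$ is Gorenstein then $\omega \cong R$, so $\Tor_i^R(\omega,\omega) = 0$ and $\Ext_R^i(\omega,R) = 0$ for \emph{all} $i \ge 1$, in particular for any $d+1$ consecutive values of $i \ge 2$.

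For (ii)$\Rightarrow$(i): apply Theorem~\ref{thm:min-mult-tor-test} with $M = N = \omega$. Since $s = t = \depth(\omega) = d$, the bound $2d - (s+t) + 2$ equals $2$, so hypothesis (ii) is exactly the hypothesis of that theorem. Its conclusion yields $\projdim_R(\omega) < \infty$; as $\omega$ is maximal Cohen--Macaulay, the Auslander--Buchsbaum formula forces $\omega$ to be free, and a free (equivalently cyclic) canonical module makes $R$ Gorenstein. For (iii)$\Rightarrow$(i): apply Theorem~\ref{thm:min-mult-ext-test} with $M = \omega$ and $N = R$; now $s = \depth(\omega) = d$, so $d - s + 2 = 2$ and hypothesis (iii) matches. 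The conclusion is that $\projdim_R(\omega) < \infty$ or $\injdim_R(R) < \infty$; in the first case $R$ is Gorenstein as just argued, and in the second case $R$ is Gorenstein by Bass's characterization of Gorenstein rings via finite self-injective dimension.

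There is no real obstacle here beyond bookkeeping: the only thing to notice is that $\depth(\omega) = d$ makes both thresholds equal to $2$, after which the statement is a citation of the two preceding theorems together with the standard fact that $\omega$ being free is equivalent to $R$ being Gorenstein. Existence of $\omega$ is part of the hypotheses, so no passage to the completion is needed, and the reductions to the infinite-residue-field case are already absorbed into Theorem~\ref{thm:Ulrich-mod-is-test-mod-for-mcm-mods} (via \ref{para:res-field-infinite}) on which Theorems~\ref{thm:min-mult-tor-test} and \ref{thm:min-mult-ext-test} rest.
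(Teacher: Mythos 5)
Your proof is correct and follows essentially the same route as the paper: both reduce (ii) and (iii) to Theorems~\ref{thm:min-mult-tor-test} and \ref{thm:min-mult-ext-test} by noting that $\omega$ is maximal Cohen--Macaulay (so the thresholds become $2$) and then invoke the standard fact that $R$ is Gorenstein precisely when $\projdim_R(\omega)$ (or $\injdim_R(R)$) is finite. The paper merely states this in one line, while you spell out the Auslander--Buchsbaum and Bass steps; the content is identical.
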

 
 \begin{proof}
 	It is a well-known fact that $ R $ is Gorenstein if and only if $ \projdim_R(\omega) $ is finite. So the corollary follows from Theorems~\ref{thm:min-mult-tor-test} and \ref{thm:min-mult-ext-test}.
 \end{proof}
 
 \begin{remark}\label{rmk:Tachikawa-conj-min-mult}
 	In \cite{ABS05}, Avramov, Buchweitz and \c{S}ega proved in several significant
 	cases the following commutative local analog of a conjecture of Tachikawa:
 	If $ \Ext_R^i(\omega, R) = 0 $ for all $ i \ge 1 $, then $ R $ is Gorenstein.
 	In a particular case, they showed that if there is an $ R $-regular sequence
 	$ \underline{x} $ such that $ \fm^3 \subseteq (\underline{x}) $,
 	then $ \Ext_R^i(\omega, R) = 0 $ for all $ 1 \le i \le d + 1 $ implies that $ R $ is Gorenstein;	see \cite[Theorem~5.1]{ABS05}. We note that if the residue field $ k $ is infinite and $R$ has minimal multiplicity, then there is a minimal reduction
 	$J$ of $\fm$ such that $\fm^2 \subseteq J$.	The implication `(iii) $ \Rightarrow $ (i)' in Corollary~\ref{cor:Tachikawa-conj-min-mult}	does not quite follow from the result of Avramov, Buchweitz and \c{S}ega. We should also note that our proof is considerably simpler than theirs.
 	\end{remark}

\section{Criteria for regular and Gorenstein local rings via syzygy modules of the residue field}
\label{sec:criteria-reg-Gor-via-syzygy}

 In this section, we give a number of necessary and sufficient conditions for a CM local ring of minimal multiplicity to be regular or Gorenstein. These criteria are based on the vanishing of certain Exts or Tors involving syzygy modules of the residue field. Throughout this section, we are going to refer the following:
 
 \begin{hypothesis}\label{hyp:min-mult-dimR}
 	Let $ (R,\fm,k) $ be a CM local ring of minimal multiplicity. Set $ d := \dim(R) $.
 \end{hypothesis}
 
\subsection{On homomorphic images of finite direct sums of syzygy modules}\label{subsec:homomorphic-image}

 Here we consider the vanishing of Exts and Tors involving homomorphic images of finite direct sums of syzygy modules of the residue field.
 One may compare the following result with \cite[Theorem~4.1]{Gho}.
 
 \begin{proposition}\label{prop:min-mult-charc-reg-hom-image}
 	Along with {\rm Hypothesis~\ref{hyp:min-mult-dimR}}, assume that $ M $ and $ N $ are non-zero homomorphic images of finite direct sums of syzygy modules of $ k $. {\rm (}Possibly, $ M = N ${\rm )}. Set $ i_0 := 2 d - \depth(M) - \depth(N) + 2 $. Then the following statements are equivalent:
 	\begin{enumerate}
 		\item[{\rm (i)}] $ R $ is regular.
 		\item[{\rm (ii)}] $ \Tor_i^R(M,N) = 0 $ for some $ (d+1) $ consecutive values of $ i \ge i_0 $.
 	\end{enumerate}
 	Moreover, if $ N $ is MCM, then we may add the following:
 	\begin{enumerate}
 		\item[{\rm (iii)}] $ \Ext_R^i(M,N) = 0 $ for some $ (d+1) $ consecutive values of $ i \ge i_0 $.
 	\end{enumerate}
 \end{proposition}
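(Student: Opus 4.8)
The implications $(i)\Rightarrow(ii)$ and $(i)\Rightarrow(iii)$ are immediate: if $R$ is regular then $\projdim_R(M)<\infty$, so $\Tor_i^R(M,N)$ and $\Ext_R^i(M,N)$ vanish for all $i\gg 0$, in particular for some $(d+1)$ consecutive values of $i\ge i_0$. So the work lies in the converses, and the plan is to feed the already-proved Theorems~\ref{thm:min-mult-tor-test} and~\ref{thm:min-mult-ext-test} one structural fact about our modules.

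For $(ii)\Rightarrow(i)$, put $s:=\depth(M)$ and $t:=\depth(N)$; then $i_0=2d-(s+t)+2$ is exactly the threshold in Theorem~\ref{thm:min-mult-tor-test}, which therefore yields that $\projdim_R(M)$ or $\projdim_R(N)$ is finite. By the symmetry of the hypotheses (and of $\Tor$) we may assume $\projdim_R(M)<\infty$, and it remains to show: over a CM local ring of minimal multiplicity, a non-zero homomorphic image $M$ of a finite direct sum of syzygy modules of $k$ with $\projdim_R(M)<\infty$ forces $R$ to be regular. Here I would reduce to $k$ infinite, so that $\fm^2=(\ux)\fm$ for a system of parameters $\ux=x_1,\dots,x_d$ which is also a minimal reduction of $\fm$, and invoke the key fact that every syzygy $\Omega_n^R(k)$ with $n\ge 1$ satisfies $\fm\,\Omega_n^R(k)=(\ux)\,\Omega_n^R(k)$, and more generally that any module $L$ with $\fm L=(\ux)L$ passes this property on to all of its syzygies. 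Then $M$, being a homomorphic image of $\bigoplus_j\Omega_{n_j}^R(k)$, inherits $\fm M=(\ux)M$, and so does every $\Omega_j^R(M)$; taking $p:=\projdim_R(M)$, the syzygy $\Omega_p^R(M)$ is free and non-zero, so $\fm\,\Omega_p^R(M)=(\ux)\,\Omega_p^R(M)$ gives $\fm=(\ux)$ (project onto a free summand), whence $\mu(\fm)\le d=\dim R$; since always $\mu(\fm)\ge\dim R$, $R$ is regular.

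For $(iii)\Rightarrow(i)$ the module $N$ is MCM, so $t=d$ and $i_0=d-s+2$ is exactly the threshold in Theorem~\ref{thm:min-mult-ext-test}; that theorem gives that $\projdim_R(M)$ or $\injdim_R(N)$ is finite. The first alternative is handled just as above. For the second, pass to the completion (which affects neither the hypotheses nor the regularity of $R$); by Proposition~\ref{prop:basic-facts-can-mod}(i), $N\cong\omega_R^{\,r}$ for some $r\ge 1$, so $\omega_R$ is a non-zero homomorphic image of a finite direct sum of syzygies of $k$, hence $\fm\,\omega_R=(\ux)\,\omega_R$ by the key fact. Since $\omega_R$ is MCM, $\ux$ is $\omega_R$-regular and $\omega_R/(\ux)\omega_R\cong\omega_{R/(\ux)}$ by Proposition~\ref{prop:basic-facts-can-mod}(iii); this module is annihilated by $\fm/(\ux)$, but the canonical module of an Artinian local ring is faithful, forcing $\fm=(\ux)$, i.e.\ $R$ is regular.

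The main obstacle is the structural fact that $\fm\,\Omega_n^R(k)=(\ux)\,\Omega_n^R(k)$ for all $n\ge 1$, and more generally that $\fm L=(\ux)L$ is inherited by syzygies. When $\Omega_n^R(k)$ is maximal Cohen-Macaulay (i.e.\ $n\ge d$) one reduces quickly to Lemma~\ref{lem:min-mult-syz-Ulrich-mods} together with the fact that an MCM module $U$ is Ulrich exactly when $\fm U=(\ux)U$; but the low syzygies $\Omega_n^R(k)$ with $1\le n\le d-1$ are not MCM, and the passage from a general $L$ with $\fm L=(\ux)L$ to $\Omega_1^R(L)$ has to be handled directly from the relation $\fm\cdot\fm F=(\ux)\fm F$ (for a free cover $F\to L$), a lifting argument modulo $(\ux)$, and a Krull-intersection closure step — organizing that argument so that the remainder terms are genuinely controlled is the delicate point. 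Everything else is a formal application of the vanishing results of Section~\ref{sec:vanishing-ext-tor} and standard facts about canonical modules.
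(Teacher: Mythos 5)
Your skeleton matches the paper's up to the point where it matters: (i)~$\Rightarrow$~(ii),(iii) via Auslander--Buchsbaum, and for the converses you feed the thresholds $i_0=2d-\depth(M)-\depth(N)+2$ (resp.\ $d-\depth(M)+2$, using $t=d$ when $N$ is MCM) into Theorems~\ref{thm:min-mult-tor-test} and \ref{thm:min-mult-ext-test} to conclude that $\projdim_R$ of one module, or $\injdim_R(N)$, is finite. At that point the paper simply cites known results: \cite[Proposition~7]{Mar96} (over \emph{any} local ring, a non-zero homomorphic image of a finite direct sum of syzygies of $k$ with finite projective dimension forces $R$ regular) for the projective-dimension alternative, and \cite[Corollary~3.4]{GGP} for the injective-dimension alternative. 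You instead attempt to reprove these facts in the minimal-multiplicity case.

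That attempt has a genuine gap: everything hinges on your ``key fact'' that $\fm\,\Omega_n^R(k)=(\ux)\,\Omega_n^R(k)$ for all $n\ge 1$, and more generally that $\fm L=(\ux)L$ is inherited by first syzygies, and you do not prove it --- you explicitly flag it as ``the delicate point.'' This is not a routine verification. The implication $\fm L=(\ux)L \Rightarrow \fm\,\Omega_1^R(L)=(\ux)\,\Omega_1^R(L)$ is false without minimal multiplicity (take $L=k$), so the hypothesis must enter in an essential way at every stage; and the one tool of this kind available in the paper, Lemma~\ref{lem:min-mult-syz-Ulrich-mods} (i.e.\ \cite[2.5]{BHU87}), relies on $\ux$ being a regular sequence on the module, which fails exactly for the non-MCM syzygies $\Omega_n^R(k)$ with $1\le n\le d-1$ that you need. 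From $\fm\,\Omega\subseteq(\ux)\fm F$ one cannot pass to $\fm\,\Omega\subseteq(\ux)\Omega$ without a real argument, and the sketched ``lifting modulo $(\ux)$ plus Krull-intersection'' step does not supply one. So as written, (ii)~$\Rightarrow$~(i) and (iii)~$\Rightarrow$~(i) are incomplete. The immediate repair is the paper's: after Theorems~\ref{thm:min-mult-tor-test}/\ref{thm:min-mult-ext-test}, quote \cite[Proposition~7]{Mar96} and, for finite $\injdim_R(N)$, \cite[Corollary~3.4]{GGP}; no structure theory of the syzygies of $k$ is then required. (Granting your key fact, the endgame --- a free syzygy forcing $\fm=(\ux)$, and faithfulness of $\omega_{R/(\ux)}$ forcing $\fm=(\ux)$ --- would indeed work, so the gap is confined to that unproved lemma.)
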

 
 \begin{proof}
 	(i) $ \Rightarrow $ \{(ii) and (iii)\}: If $ R $ is regular, then $ \projdim_R(M) $ is finite, and hence $ \projdim_R(M) = d - \depth(M) $ (by the Auslander-Buchsbaum Formula). Therefore
 	\[
	 	\Tor_i^R(M,N) = 0 = \Ext_R^i(M,N) \quad \mbox{for all } i \ge d - \depth(M) + 1.
 	\]
 	
 	(ii) $ \Rightarrow $ (i): By virtue of Theorem~\ref{thm:min-mult-tor-test}, either $ \projdim_R(M) $ or $ \projdim_R(N) $ is finite. In either case, it follows from \cite[Proposition~7]{Mar96} that $ R $ is regular.
 	
 	(iii) $ \Rightarrow $ (i): In view of Theorem~\ref{thm:min-mult-ext-test}, either $ \projdim_R(M) $ or $ \injdim_R(N) $ is finite. If $ \projdim_R(M) $ is finite, then $ R $ is regular (due to \cite[Proposition~7]{Mar96}). In other case, we have that $ \injdim_R(N) $ is finite. Then, by virtue of \cite[Corollary~3.4]{GGP}, we get that $ R $ is regular, which completes the proof of this implication.
 \end{proof}
 
 \begin{remark}\label{rmk:disadvantage}
 	Although Proposition~\ref{prop:min-mult-charc-reg-hom-image} is stronger than the result \cite[Theorem~4.1]{Gho} in many directions, but one disadvantage is that here we consider the vanishing of $ i $th Ext or Tor for $ i \ge 2 $ at least.
 \end{remark}
 
 Here are the criteria for Gorenstein local rings. The reader may compare this result with \cite[Theorems~5.1 and 5.5]{Gho}.
 
 \begin{proposition}\label{prop:min-mult-charc-Gor-hom-image}
 	Along with {\rm Hypothesis~\ref{hyp:min-mult-dimR}}, let $ M $ be a non-zero homomorphic image of a finite direct sum of syzygy modules of $ k $. Set $ i_0 := d - \depth(M) + 2 $. Then the following statements are equivalent:
 	\begin{enumerate}
 		\item[{\rm (i)}] $ R $ is Gorenstein.
 		\item[{\rm (ii)}] $ \Ext_R^i(M,R) = 0 $ for some $ (d+1) $ consecutive values of $ i \ge i_0 $.
 	\end{enumerate}
 	Moreover, if $ R $ has a canonical module $ \omega $, then we may add the following:
 	\begin{enumerate}
 		\item[{\rm (iii)}] $ \Tor_i^R(M,\omega) = 0 $ for some $ (d+1) $ consecutive values of $ i \ge i_0 $.
 	\end{enumerate}
 \end{proposition}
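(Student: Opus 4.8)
The plan is to mimic the proof of Proposition~\ref{prop:min-mult-charc-reg-hom-image}, now pairing $M$ with $R$ (and, for (iii), with $\omega$) in place of a general second module, and replacing the regularity inputs by their Gorenstein analogues. The two results I would feed in are Theorems~\ref{thm:min-mult-ext-test} and \ref{thm:min-mult-tor-test}, together with the facts that: (a) a non-zero homomorphic image of a finite direct sum of syzygy modules of $k$ which has finite projective dimension forces $R$ to be regular (\cite[Proposition~7]{Mar96}); and (b) $R$ is Gorenstein if and only if $\injdim_R(R)$ is finite, equivalently if and only if $\projdim_R(\omega)$ is finite.

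For (i) $\Rightarrow$ (ii): if $R$ is Gorenstein then $\injdim_R(R) = d$, so $\Ext_R^i(M,R) = 0$ for every $i > d$; since $i_0 = d - \depth(M) + 2 \le d + 2$, the $d+1$ consecutive values $i = \max\{i_0, d+1\}, \ldots, \max\{i_0,d+1\} + d$ all lie above $d$ and at or above $i_0$, which gives (ii). For (i) $\Rightarrow$ (iii): Gorensteinness yields $\omega \cong R$, hence $\Tor_i^R(M,\omega) \cong \Tor_i^R(M,R) = 0$ for every $i \ge 1$.

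For (ii) $\Rightarrow$ (i): put $s := \depth(M)$, so that $i_0 = d - s + 2$ is exactly the threshold appearing in Theorem~\ref{thm:min-mult-ext-test} for the pair $(M,R)$. That theorem then yields that $\projdim_R(M)$ is finite or $\injdim_R(R)$ is finite. In the first case, since $M$ is a non-zero homomorphic image of a finite direct sum of syzygy modules of $k$, \cite[Proposition~7]{Mar96} forces $R$ to be regular, hence Gorenstein; in the second case $\injdim_R(R) < \infty$ is the definition of Gorenstein. For (iii) $\Rightarrow$ (i): since $\omega$ is maximal Cohen-Macaulay, $\depth(\omega) = d$, so $2d - \depth(M) - \depth(\omega) + 2 = d - \depth(M) + 2 = i_0$, which matches the hypothesis of Theorem~\ref{thm:min-mult-tor-test} applied to the pair $(M,\omega)$. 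Hence $\projdim_R(M)$ or $\projdim_R(\omega)$ is finite; the former again gives $R$ regular (hence Gorenstein) by \cite[Proposition~7]{Mar96}, and the latter gives $R$ Gorenstein by fact (b).

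I do not expect a genuine obstacle: the argument is essentially a bookkeeping exercise aligning the index threshold $i_0$ with the thresholds in Theorems~\ref{thm:min-mult-ext-test} and \ref{thm:min-mult-tor-test}, which works out on the nose once one observes $\depth(\omega) = d$, plus the routine verification that the vanishing asserted in (i) $\Rightarrow$ (ii) can be placed in a window of $d+1$ consecutive integers at or above $i_0$. The one point that deserves care is that \cite[Proposition~7]{Mar96} is precisely what collapses the ``$\projdim_R(M)$ finite'' alternative to regularity, so it is essential to carry the structural hypothesis on $M$ (non-zero homomorphic image of a finite direct sum of syzygies of $k$) through the whole proof.
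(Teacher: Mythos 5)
Your proposal is correct and follows essentially the same route as the paper: (i)$\Rightarrow$(ii) via $\injdim_R(R)=d$, (i)$\Rightarrow$(iii) via $\omega\cong R$, and the converses by applying Theorems~\ref{thm:min-mult-ext-test} and \ref{thm:min-mult-tor-test} to the pairs $(M,R)$ and $(M,\omega)$ (using $\depth(\omega)=d$ to match the threshold $i_0$), then collapsing the finite-projective-dimension alternative to regularity by \cite[Proposition~7]{Mar96} and the other alternative to Gorensteinness directly. Your extra bookkeeping about placing the window of $d+1$ consecutive indices at or above $i_0$ is a harmless elaboration of what the paper leaves implicit.
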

 
 \begin{proof}
 	(i) $ \Rightarrow $ (ii): If $ R $ is Gorenstein, then $ \injdim_R(R) = d $ (see \cite[3.1.17]{BH98}). Hence $ \Ext_R^i(M,R) = 0 $ for all $ i \ge d + 1 $.
 	
 	(ii) $ \Rightarrow $ (i): By virtue of Theorem~\ref{thm:min-mult-ext-test}, either $ \projdim_R(M) $ or $ \injdim_R(R) $ is finite. If $ \projdim_R(M) $ is finite, then $ R $ is regular (due to \cite[Proposition~7]{Mar96}), and hence $ R $ is Gorenstein. In other case, $ \injdim_R(R) $ is finite, i.e., $ R $ is Gorenstein. So, in both cases, we obtain that $ R $ is Gorenstein.
 	
 	(i) $ \Rightarrow $ (iii): If $ R $ is Gorenstein, then $ \omega \cong R $. Hence $	\Tor_i^R(M, \omega) = 0 $ for all $ i \ge 1 $.
 	
 	(iii) $ \Rightarrow $ (i): In view of Theorem~\ref{thm:min-mult-tor-test}, either $ \projdim_R(M) $ or $ \projdim_R(\omega) $ is finite. If $ \projdim_R(M) $ is finite, then $ R $ is regular (by \cite[Proposition~7]{Mar96}), and hence $ R $ is Gorenstein. In other case, $ \projdim_R(\omega) $ is finite, which also implies that $ R $ is Gorenstein.
 \end{proof}

\subsection{On direct summands of syzygy modules}\label{subsec:dir-summ}

 We now provide a few criteria for a CM local ring of minimal multiplicity to be regular or Gorenstein in terms of direct summands of syzygy modules of the residue field. We use the following elementary result. This is probably known. But for the sake of completeness, we give its proof here.

\begin{lemma}\label{lem:finite-injdim}
	Let $ (R,\fm,k) $ be a $ d $-dimensional local ring {\rm (}not necessarily CM{\rm )}. Let $ N $ be an $ R $-module. Fix an arbitrary integer $ n \ge 1 $. Suppose $ \Ext_R^i(k,N) = 0 $ for all $ n \le i \le n + d $. Then $ \injdim_R(N) \le n - 1 $.
\end{lemma}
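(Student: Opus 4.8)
The plan is to avoid the injective resolution of $N$ entirely and instead deduce directly from the hypothesis that $\Ext_R^n(R/I,N)=0$ for \emph{every} ideal $I$ of $R$; since it is well known that $\injdim_R(N)\le n-1$ if and only if $\Ext_R^n(R/I,N)=0$ for all ideals $I$ of $R$ (a standard consequence of Baer's criterion together with a dimension shift; see, e.g., \cite{BH98}), this suffices.

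The technical heart is the following sub-statement, which I would establish by induction on $\dim(L)$: \emph{for every finitely generated $R$-module $L$, one has $\Ext_R^i(L,N)=0$ for all $i$ with $n\le i\le n+d-\dim(L)$}. When $\dim(L)=0$, the module $L$ has finite length, hence admits a filtration with all successive quotients isomorphic to $k$; chasing the long exact sequences of $\Ext_R(-,N)$ up this filtration and using $\Ext_R^i(k,N)=0$ for $n\le i\le n+d$ gives $\Ext_R^i(L,N)=0$ for $n\le i\le n+d$. For the inductive step, let $\dim(L)=e\ge 1$ and assume the sub-statement for all modules of dimension $<e$. I would first treat $L=R/\fp$ with $\dim(R/\fp)=e$. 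Choosing $x\in\fm\setminus\fp$, the image of $x$ in the domain $R/\fp$ is a nonzerodivisor, so $\dim\big(R/(\fp+xR)\big)\le e-1$; applying $\Ext_R(-,N)$ to the short exact sequence $0\lra R/\fp\xrightarrow{\,x\,}R/\fp\lra R/(\fp+xR)\lra 0$ gives, for each $i$ with $n\le i\le n+d-e$, the exact segment $\Ext_R^i(R/\fp,N)\xrightarrow{\,x\,}\Ext_R^i(R/\fp,N)\lra\Ext_R^{i+1}\big(R/(\fp+xR),N\big)$, whose last term is $0$ by the inductive hypothesis (as $n\le i+1\le n+d-(e-1)$). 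Hence multiplication by $x$ is surjective on the finitely generated $R$-module $\Ext_R^i(R/\fp,N)$, and Nakayama's lemma forces $\Ext_R^i(R/\fp,N)=0$. For a general $L$ of dimension $e$, a prime filtration has quotients $R/\fp$ with $\dim(R/\fp)\le e$, all of which now satisfy the sub-statement (dimension $e$ just proved, dimension $<e$ by hypothesis), so the long exact sequences of $\Ext_R(-,N)$ yield $\Ext_R^i(L,N)=0$ for $n\le i\le n+d-e$, which completes the induction.

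Granting the sub-statement, for an arbitrary ideal $I$ of $R$ we have $\dim(R/I)\le d$, so the index $n$ lies in the interval $n\le i\le n+d-\dim(R/I)$ and therefore $\Ext_R^n(R/I,N)=0$; by the first paragraph, $\injdim_R(N)\le n-1$.

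I expect the only delicate part to be the bookkeeping of Ext-degrees in the inductive step: one must check that the single degree shift incurred in passing from $R/\fp$ to $R/(\fp+xR)$ is exactly compensated by the unit drop in Krull dimension, so that after at most $d$ such reductions precisely the degree $i=n$ remains under control --- this is where the assumption of $d+1=\dim(R)+1$ consecutive vanishings gets used up. Alternatively, one could argue through Bass numbers: if $\injdim_R(N)\ge n$, then (since otherwise the $n$th term of the minimal injective resolution would vanish, forcing $\injdim_R(N)\le n-1$) $\mu^n(\fp,N)\ne 0$ for some prime $\fp$, and climbing a saturated chain of primes from $\fp$ up to $\fm$, of length $r\le\dim(R/\fp)\le d$, while repeatedly applying Bass's inequality $\mu^i(\fp,N)\le\mu^{i+1}(\fq,N)$ for adjacent primes $\fp\subset\fq$, produces $\Ext_R^{n+r}(k,N)\ne 0$ with $n\le n+r\le n+d$, contradicting the hypothesis.
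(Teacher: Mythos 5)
Your main argument is correct, and it takes a genuinely different route from the paper's. The paper argues by contradiction and climbs a chain of primes: if $\Ext_R^n(R/\fp,N)\neq 0$ for some $\fp\neq\fm$, then \cite[3.1.13]{BH98} produces strictly larger primes $\fq_1\subsetneq\fq_2\subsetneq\cdots$ with $\Ext_R^{n+r}(R/\fq_r,N)\neq 0$, reaching $\fm$ after $r\le d$ steps and contradicting the hypothesis; it then concludes with the characterization of injective dimension via the modules $R/\fp$, $\fp\in\Spec(R)$ \cite[3.1.12]{BH98}. You instead prove, by induction on $\dim(L)$, the quantified vanishing $\Ext_R^i(L,N)=0$ for $n\le i\le n+d-\dim(L)$, handling $L=R/\fp$ by multiplication with $x\in\fm\smallsetminus\fp$, the long exact sequence for $0\to R/\fp\xrightarrow{x}R/\fp\to R/(\fp+xR)\to 0$ and Nakayama, and then conclude via the Baer-type criterion ($\injdim_R(N)\le n-1$ iff $\Ext_R^n(R/I,N)=0$ for all ideals $I$); the degree bookkeeping you flag does check out, since the shift $i\mapsto i+1$ is matched by the drop $\dim(R/(\fp+xR))\le\dim(R/\fp)-1$, and after at most $d$ reductions only $i=n$ survives, which is exactly what the final step needs. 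What your route buys is self-containedness: it avoids the Bass-number/adjacent-prime machinery of \cite[3.1.13]{BH98} at the cost of a slightly longer induction, whereas the paper's proof is shorter by quoting those results; note also that your Nakayama step uses that $\Ext_R^i(R/\fp,N)$ is finitely generated, i.e., that $N$ is finitely generated, which is legitimate under the paper's standing convention but means this version of the lemma is not module-theoretically more general. Your alternative sketch via Bass numbers and a saturated chain from $\fp$ to $\fm$ is essentially identical to the paper's argument.
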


\begin{proof}
	We claim that $ \Ext_R^n(R/\fp,N) = 0 $ for all $ \fp \in \Spec(R) $. Fix $ \fp \in \Spec(R) $. If possible, assume that $ \Ext_R^n(R/\fp,N) \neq 0 $. Then we must have $ \fp \neq \fm $, and hence $ d \ge 1 $. Moreover, in view of \cite[3.1.13]{BH98}, there exists a prime ideal $ \fq_1 \supsetneq \fp $ such that $ \Ext_R^{n+1}(R/\fq_1,N) \neq 0 $. If $ \fq_1 = \fm $, then we are getting a contradiction. So we may assume that $ \fq_1 \neq \fm $. Then we must have $ d \ge 2 $, and there is a prime ideal $ \fq_2 \supsetneq \fq_1 $ such that $ \Ext_R^{n+2}(R/\fq_2,N) \neq 0 $ by \cite[3.1.13]{BH98}. This process must stop after some finite number of steps. That means we obtain the situation that $ \fm = \fq_r \supsetneq \fq_{r-1} \supsetneq \cdots \supsetneq \fq_1 \supsetneq \fp $ and $ \Ext_R^{n+r}(R/\fm,N) \neq 0 $ for some $ 1 \le r \le d $, which is a contradiction. Therefore $ \Ext_R^n(R/\fp,N) = 0 $ for all $ \fp \in \Spec(R) $, which implies that $ \injdim_R(N) \le n - 1 $ (see \cite[3.1.12]{BH98}).
\end{proof}

We now give the criteria for regular local rings.

\begin{theorem}\label{thm:min-mult-charc-reg-direct-summand}
	Along with {\rm Hypothesis~\ref{hyp:min-mult-dimR}}, assume that $ M $ and $ N $ are non-zero direct summands of some syzygy modules of $ k $. {\rm (}Possibly, $ M = N ${\rm )}. Then the following statements are equivalent:
	\begin{enumerate}
		\item[{\rm (i)}] $ R $ is regular.
		\item[{\rm (ii)}] $ \Tor_i^R(M,N) = 0 $ for some $ (d+1) $ consecutive values of $ i \ge 1 $.
		\item[{\rm (iii)}] $ \Ext_R^i(M,N) = 0 $ for some $ (d+1) $ consecutive values of $ i \ge 1 $.
	\end{enumerate}
\end{theorem}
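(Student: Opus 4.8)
The implication (i) $\Rightarrow$ \{(ii),(iii)\} is trivial: if $R$ is regular then $p:=\projdim_R(M)<\infty$, so $\Tor^R_i(M,N)=0=\Ext_R^i(M,N)$ for all $i>p$, and this vanishing covers $(d+1)$ consecutive values $\ge 1$. For the converse I would argue contrapositively --- assume $R$ is not regular and deduce that (ii) and (iii) both fail. By \ref{para:res-field-infinite} I may take $k$ infinite, and I write $M$ (resp. $N$) as a non-zero direct summand of $\Omega_m^R(k)$ (resp. $\Omega_n^R(k)$). A direct summand of $\Omega_m^R(k)$ is a homomorphic image of a finite direct sum of syzygy modules of $k$, so by \cite[Proposition~7]{Mar96} and \cite[Corollary~3.4]{GGP} the dimensions $\projdim_R(M)$, $\projdim_R(N)$ and $\injdim_R(N)$ are all infinite (otherwise $R$ is regular); in particular $M$ and $N$ are non-free.

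The engine is a reduction to Ulrich modules. By Lemma~\ref{lem:min-mult-syz-Ulrich-mods}, over a CM local ring of minimal multiplicity the first syzygy of a non-free MCM module is Ulrich; combined with the Depth Lemma (which makes $\Omega_j^R(k)$ MCM for $j\ge d$ and, more generally, $\Omega_j^R(X)$ MCM for $j\ge d-\depth_R(X)$), this shows $\Omega_j^R(k)$ is Ulrich for $j\ge d+1$, and since $\projdim_R(M)=\infty$ that $\Omega_{d-\depth_R(M)+1}^R(M)$ is Ulrich. When $M$ is a summand of a high syzygy ($m\ge d+1$) it is itself Ulrich, and then Theorem~\ref{thm:Ulrich-mod-is-test-mod-for-mcm-mods}(iii) (resp. (i)), applied to the $(d+1)$ consecutive vanishing $\Tor^R_i(M,N)$ (resp. $\Ext_R^i(M,N)$) after replacing $N$ by an MCM syzygy if necessary, forces $N$ free (resp. $\injdim_R(N)<\infty$) --- contradicting the previous paragraph. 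For general $M,N$ one pays a syzygy shift (at most $d+1-m$ for $M$, since a summand of $\Omega_m^R(k)$ has depth $\ge m$, and similarly for $N$) to reach Ulrich modules, a shift the $(d+1)$-term hypothesis can just absorb once the index windows are aligned; when they are not aligned at the bottom, Theorems~\ref{thm:min-mult-tor-test} and \ref{thm:min-mult-ext-test} --- which hold for arbitrary modules --- already yield finiteness of $\projdim_R(M)$, $\projdim_R(N)$, or $\injdim_R(N)$, hence regularity, from consecutive vanishings starting at $2d-\depth_R(M)-\depth_R(N)+2$, resp. $d-\depth_R(M)+2$.

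The case that needs separate treatment is a summand of the bottom syzygy $\fm=\Omega_1^R(k)$, which for $d\ge2$ is not even MCM, so that the Ulrich machinery does not apply. Here I would use that $\fm$ is \emph{indecomposable} when $d\ge2$: a splitting $\fm=A\oplus B$ localizes, at a non-maximal prime $\fq$, to a splitting of the local ring $R_\fq=\fm_\fq$ into non-zero summands, which is impossible; hence $A$ or $B$ is supported only at $\fm$ and so has finite length --- impossible too, since $\depth_R(R)\ge1$ forces $\Soc(R)=0$, and one uses that the punctured spectrum of a CM local ring of dimension $\ge2$ is connected. Thus a non-zero summand of $\fm$ equals $\fm$, and the long exact sequences for $0\to\fm\to R\to k\to0$ give $\Tor^R_i(\fm,N)\cong\Tor^R_{i+1}(k,N)$ and $\Ext^i_R(\fm,N)\cong\Ext^{i+1}_R(k,N)$ for $i\ge1$: one vanishing on the left yields $\beta_{i+1}(N)=0$, i.e. $\projdim_R(N)<\infty$, while $(d+1)$ consecutive vanishings yield $\injdim_R(N)<\infty$ by Lemma~\ref{lem:finite-injdim} --- either way $R$ is regular, a contradiction. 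When $d\le1$ this is easier still: $\fm$ is itself Ulrich (directly, from $e(R)=\embdim(R)$ for $d=1$ and $\fm^2=0$ for $d=0$), so its summands are Ulrich; and the intermediate low-syzygy cases in dimension $\ge3$ are handled by the same combination of ideas.

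The main obstacle is precisely this bookkeeping: lowering the index bound all the way to $i\ge1$ (as opposed to the $i_0$ of Proposition~\ref{prop:min-mult-charc-reg-hom-image}) forces one to track, simultaneously, how far a given summand of a syzygy of $k$ is from being Ulrich and where the $(d+1)$-term vanishing run starts; the conceptually non-routine ingredient is the indecomposability of $\fm$ in dimension $\ge2$, which is what makes the bottom-syzygy case --- where the Ulrich tests are unavailable --- reducible to $\Tor$/$\Ext$ against $k$ itself. Everything else is a careful but essentially mechanical combination of Theorems~\ref{thm:Ulrich-mod-is-test-mod-for-mcm-mods}, \ref{thm:min-mult-tor-test}, \ref{thm:min-mult-ext-test} and Lemma~\ref{lem:finite-injdim}.
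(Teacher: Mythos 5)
Your plan for the converse — pass to syzygies of $M$ and $N$ until they become Ulrich/MCM, then invoke Theorem~\ref{thm:Ulrich-mod-is-test-mod-for-mcm-mods}, with Theorems~\ref{thm:min-mult-tor-test}, \ref{thm:min-mult-ext-test} as a fallback and a separate treatment of $\fm=\Omega_1^R(k)$ via its indecomposability — has a genuine gap at its central step. Each syzygy shift consumes vanishings from the hypothesis without lowering the number $(d+1)$ that the Ulrich test requires: if the window is exactly $i=1,\ldots,d+1$ (the case the theorem is really about, cf.\ Remark~\ref{rmk:recovering-disadvantage}) and $M$ is a non-Ulrich summand of an intermediate syzygy, the shifted window is too short. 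Concretely, take $d=2$ and $M=N$ a direct summand of $\Omega_2^R(k)$ with $\Tor_i^R(M,N)=0$ for $i=1,2,3$. Here $M$ is MCM but you have not shown it is Ulrich (Lemma~\ref{lem:min-mult-syz-Ulrich-mods} only covers summands of $\Omega_m^R(k)$ with $m\ge d+1$), so you must replace $M$ by $\Omega_1^R(M)$, and via Lemma~\ref{lem:syz-ext-tor} you retain only the two vanishings $\Tor_j^R(\Omega_1^R(M),N)=0$, $j=1,2$, while Theorem~\ref{thm:Ulrich-mod-is-test-mod-for-mcm-mods}(iii) needs three; the fallback Theorem~\ref{thm:min-mult-tor-test} needs three consecutive vanishings at $i\ge 2d-s-t+2=2$, and your window supplies only $i=2,3$; and the indecomposability of $\fm$ is irrelevant since $M$ is not a summand of $\Omega_1^R(k)$. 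The same obstruction recurs for summands of $\Omega_m^R(k)$ with $2\le m\le d$ in every dimension $d\ge 2$, so the sentence ``the intermediate low-syzygy cases in dimension $\ge 3$ are handled by the same combination of ideas'' conceals exactly the difficulty; the cited combination provably does not reach the bound $i\ge 1$ there. (Your aside that $\fm$ is indecomposable when $\depth R\ge 2$ is correct, via Hartshorne connectedness, but it does not help with these cases.)

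For comparison, the paper avoids this bookkeeping entirely by inducting on $d$: choose $x\in\fm\smallsetminus\fm^2$ superficial and regular, so that Lemma~\ref{lem:Ext-Tor-mod-x} costs only one vanishing while the target drops from $d+1$ to $d=\dim(\oR)+1$, and the key structural input is Takahashi's decomposition $\overline{\Omega_m^R(k)}\cong\Omega_m^{\oR}(k)\oplus\Omega_{m-1}^{\oR}(k)$ together with Krull--Schmidt, which guarantees that indecomposable summands of $\oM$, $\oN$ are again summands of syzygies of $k$ over $\oR$; the base case $d=0$ is \cite[Theorem~4.1]{Gho}, and the cases $m=0$ or $n=0$ are handled directly with Lemma~\ref{lem:finite-injdim}, \cite[Proposition~7]{Mar96} and \cite[Theorem~3.7]{GGP}. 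If you want to salvage your direct approach, you would need an additional input of that kind (e.g.\ proving that summands of $\Omega_m^R(k)$ for $m\ge d$ are already Ulrich, and even that is not enough for summands of $\Omega_m^R(k)$ with $2\le m\le d-1$ when the window starts at $i=1$); as written, the argument does not close.
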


\begin{proof}
	(i) $ \Rightarrow $ \{(ii) and (iii)\}: If $ R $ is regular, then $ \projdim_R(M) \le d $, and hence
	\[
	\Tor_i^R(M,N) = 0 = \Ext_R^i(M,N) \quad \mbox{for all } i \ge d + 1.
	\]
	
	\{(ii) or (iii)\} $ \Rightarrow $ (i): To prove these implications, we may without loss of generality assume that $ R $ is complete. In view of \ref{para:res-field-infinite}, we may also assume that $ k $ is infinite. To prove (ii) $ \Rightarrow $ (i) and (iii) $ \Rightarrow $ (i), we use induction on $ d $. If $ d = 0 $, then the implications follow from \cite[Theorem~4.1]{Gho} as every $ R $-module is MCM in this case. So we assume that $ d \ge 1 $, and the implications hold true for all such rings of dimension smaller than $ d $.
	
	Since the residue field of $ R $ is infinite and $ d \ge 1 $, there exists an element $ x \in \fm \smallsetminus \fm^2 $ which is both $ R $-superficial and $ R $-regular. Set $ \overline{(-)} := (-) \otimes_R R/(x) $. In view of Lemma~\ref{lem:min-mult-mod-x}(i), we have that $ \oR $ is a $ (d - 1) $-dimensional CM local ring of minimal multiplicity. Suppose that $ M $ and $ N $ are direct summands of $ \Omega_m^R(k) $ and $ \Omega_n^R(k) $ respectively for some $ m, n \ge 0 $. The following three cases may occur.
	
	{\bf Case~1.} Assume that $ m = 0 $. In this case, $ M $ must be equal to $ k $. Therefore the statement (ii) (resp. (iii)) yields that $ \Tor_i^R(k,N) = 0 $ (resp. $ \Ext_R^i(k,N) = 0 $) for some $ (d+1) $ consecutive values of $ i \ge 1 $, which gives that $ \projdim_R(N) $ is finite (resp. $ \injdim_R(N) $ is finite by Lemma~\ref{lem:finite-injdim}), and hence the implications follow from \cite[Proposition~7]{Mar96} and \cite[Theorem~3.7]{GGP} respectively.
	
	{\bf Case~2.} Assume that $ n = 0 $. In this case, $ N $ must be equal to $ k $. So the statement (ii) (resp. (iii)) yields that $ \Tor_i^R(M,k) = 0 $ (resp. $ \Ext_R^i(M,k) = 0 $) for some $ i \ge 1 $. Therefore, in either case, we obtain that $ \projdim_R(M) $ is finite, and hence $ R $ is regular by \cite[Proposition~7]{Mar96}.
	
	If none of the above two cases holds, then we must have the following:
	
	{\bf Case~3.} Assume that $ m, n \ge 1 $. In this case, since $ \Omega_m^R(k) $ and $ \Omega_n^R(k) $ are submodules of free $ R $-modules, and $ x $ is $ R $-regular, we obtain that $ x $ is regular on both $ \Omega_m^R(k) $ and $ \Omega_n^R(k) $. Hence, since $ M $ and $ N $ are direct summands of $ \Omega_m^R(k) $ and $ \Omega_n^R(k) $ respectively, $ x $ is regular on both $ M $ and $ N $ as well. Therefore, by virtue of Lemma~\ref{lem:Ext-Tor-mod-x}, the statement (ii) (resp. (iii)) yields that
	\begin{equation}\label{eqn1-reg-charc}
		\Tor_i^{\oR}(\oM,\oN) = 0 \quad (\mbox{resp. } \Ext_{\oR}^i(\oM,\oN) = 0)
	\end{equation}
	for some $ d $ consecutive values of $ i \ge 1$. Let us now fix indecomposable direct summands $ M' $ and $ N' $ of $ \oM $ and $ \oN $ respectively. Then, from \eqref{eqn1-reg-charc}, we get that
	\begin{equation}\label{eqn2-reg-charc}
		\Tor_i^{\oR}(M',N') = 0 \quad (\mbox{resp. } \Ext_{\oR}^i(M',N') = 0)
	\end{equation}
	for some $ d $ ($ = \dim(\oR) + 1 $) consecutive values of $ i \ge 1$. Since $ M $ is a direct summand of $ \Omega_m^R(k) $, we have that $ \oM $ is a direct summand of $ \overline{\Omega_m^R(k)} $. Hence $ M' $ is a direct summand of $ \overline{\Omega_m^R(k)} $. In view of \cite[Corollary~5.3]{Tak06}, we obtain the following isomorphism of $ \oR $-modules:
	\[ 
	\overline{\Omega_m^{R}(k)} \cong \Omega_m^{\overline{R}}(k) \oplus \Omega_{m-1}^{\overline{R}}(k).
	\]
	It then follows from the uniqueness of Krull-Schmidt decomposition \cite[21.35]{Lam01} that $ M' $ is isomorphic to a direct summand of $ \Omega_m^{\overline{R}}(k) $ or $ \Omega_{m-1}^{\overline{R}}(k) $. In a similar way, we get that $ N' $ is isomorphic to a direct summand of $ \Omega_n^{\overline{R}}(k) $ or $ \Omega_{n-1}^{\overline{R}}(k) $. Thus $ M' $ and $ N' $ are non-zero direct summands of some syzygy $ \oR $-modules of the residue field $ k $ of $ \oR $. Therefore, for both (ii) $ \Rightarrow $ (i) and (iii) $ \Rightarrow $ (i), in view of \eqref{eqn2-reg-charc}, by the induction hypothesis, we obtain that $ \oR $ is regular, and hence $ R $ is regular as $ x \in \fm \smallsetminus \fm^2 $ is an $ R $-regular element.
\end{proof}

\begin{remark}\label{rmk:recovering-disadvantage}
	It should be noted that in Theorem~\ref{thm:min-mult-charc-reg-direct-summand}, unlike Proposition~\ref{prop:min-mult-charc-reg-hom-image}, we consider the vanishing of $ i $th Ext or Tor for $ i \ge 1 $.
\end{remark}

\begin{remark}\label{rmk:no-of-cons-vanishing-cannot-be-reduced}
	In view of \cite[Example~4.3]{Gho}, one obtains that the number $ (d+1) $ of consecutive vanishing of Exts or Tors in Theorem~\ref{thm:min-mult-charc-reg-direct-summand} cannot be further reduced.
\end{remark}

Here we give the criteria for Gorenstein local rings.

\begin{theorem}\label{thm:min-mult-charc-Gor-direct-summand}
	Along with {\rm Hypothesis~\ref{hyp:min-mult-dimR}}, assume that $ M $ is a non-zero direct summand of some syzygy module of $ k $. Let $ \omega $ be a canonical module of $ R $. Then the following statements are equivalent:
	\begin{enumerate}[{\rm (i)}]
		\item $ R $ is Gorenstein.
		\item $ \Ext_R^i(M,R) = 0 $ for some $ (d+1) $ consecutive values of $ i \ge 1 $.
		\item $ \Tor_i^R(\omega,M) = 0 $ for some $ (d+1) $ consecutive values of $ i \ge 1 $.
		\item $ \Ext_R^i(\omega,M) = 0 $ for some $ (d+1) $ consecutive values of $ i \ge 1 $.
	\end{enumerate}
\end{theorem}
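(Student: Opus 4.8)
The plan is to follow the proof of Theorem~\ref{thm:min-mult-charc-reg-direct-summand} closely, trading ``regular'' for ``Gorenstein'' throughout and using the Ulrich-test-module results of Section~\ref{sec:Ulrich-mod-is-test-mod} that detect finiteness of $\injdim_R(R)$ (equivalently, freeness of the canonical module $\omega$). The implication (i) $\Rightarrow$ \{(ii),(iii),(iv)\} is immediate: if $R$ is Gorenstein, then $\omega \cong R$ and $\injdim_R(R) = d$, so $\Ext_R^i(M,R) = 0$ for all $i \ge d+1$, while $\Ext_R^i(\omega,M)$ and $\Tor_i^R(\omega,M)$ vanish for all $i \ge 1$ because $\omega$ is free.

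For the converse implications I would first reduce, exactly as in Theorem~\ref{thm:min-mult-charc-reg-direct-summand}, to the case where $R$ is complete (so that the Krull--Schmidt property \cite[21.35]{Lam01} is available) and, via \ref{para:res-field-infinite}, has infinite residue field; both reductions preserve the existence of the canonical module $\omega$. Since a regular ring is Gorenstein, I may also assume $R$ is not regular. Now I induct on $d = \dim(R)$. In the base case $d = 0$ every finitely generated module is MCM, the residue field $k$ is Ulrich (at $d = 0$ one has $e(\fm,k) = 1 = \mu(k)$), and hence so is every syzygy module $\Omega_m^R(k)$ with $m \ge 1$ by Lemma~\ref{lem:min-mult-syz-Ulrich-mods}; thus $M$, being a non-zero direct summand of some $\Omega_m^R(k)$, is Ulrich. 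Then each of (ii), (iii), (iv) forces $R$ to be Gorenstein: (ii) directly by Corollary~\ref{cor:charac-Gor-via-Ulrich-mod}, and (iii), (iv) by parts (iii) and (ii) of Theorem~\ref{thm:Ulrich-mod-is-test-mod-for-mcm-mods} applied with the Ulrich module $M$ and the MCM module $\omega$, which give that $\omega$ is free.

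For the inductive step ($d \ge 1$) I would choose $x \in \fm \smallsetminus \fm^2$ that is both $R$-superficial and $R$-regular, set $\overline{(-)} := (-) \otimes_R R/(x)$, and observe by Lemma~\ref{lem:min-mult-mod-x}(i) and Proposition~\ref{prop:basic-facts-can-mod}(iii) that $\oR$ is a $(d-1)$-dimensional CM local ring of minimal multiplicity with canonical module $\oomega \cong \omega/x\omega$. Write $M$ as a direct summand of $\Omega_m^R(k)$. If $m = 0$, i.e.\ $M \cong k$, then (ii) gives $\injdim_R(R) < \infty$ by Lemma~\ref{lem:finite-injdim}, while (iii) or (iv) gives $\projdim_R(\omega) < \infty$ (a single vanishing of $\Tor_i^R(\omega,k)$ or $\Ext_R^i(\omega,k)$ already suffices); in either case $R$ is Gorenstein. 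If $m \ge 1$, then $\Omega_m^R(k)$ is a submodule of a free module, so $x$ is regular on $\Omega_m^R(k)$ and hence on its summand $M$, and $x$ is also $\omega$-regular (as $\omega$ is MCM and $\Ass_R(\omega) = \Ass_R(R)$). By Lemma~\ref{lem:Ext-Tor-mod-x} the given vanishing then descends to $d = \dim(\oR)+1$ consecutive values over $\oR$; passing to an indecomposable direct summand $M'$ of $\oM$ and invoking the isomorphism $\overline{\Omega_m^R(k)} \cong \Omega_m^{\oR}(k) \oplus \Omega_{m-1}^{\oR}(k)$ \cite[Corollary~5.3]{Tak06} together with Krull--Schmidt, $M'$ is a non-zero direct summand of a syzygy module of $k$ over $\oR$, and the relevant vanishing holds for $M'$ (in the appropriate argument) because it holds for $\oM$. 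The induction hypothesis applied to $\oR$, $M'$ and $\oomega$ now yields that $\oR$ is Gorenstein, whence $R$ is Gorenstein since $x$ is $R$-regular.

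The main obstacle, just as in Theorem~\ref{thm:min-mult-charc-reg-direct-summand}, is the careful bookkeeping of the reduction modulo $x$: one must check that $x$ stays a regular element on $M$, on $\Omega_m^R(k)$, and on $\omega$; that syzygies of $k$ over $R$ reduce to direct sums of syzygies of $k$ over $\oR$; and that the Krull--Schmidt property allows one to pass to an indecomposable summand lying inside a single syzygy module, so that the induction hypothesis applies. The only genuinely Gorenstein-specific ingredient is the standard fact that $R$ is Gorenstein precisely when $\injdim_R(R) < \infty$, equivalently when $\projdim_R(\omega) < \infty$, equivalently when $\omega$ is free; this is used only at the end of the base case and in the $M \cong k$ subcase of the inductive step.
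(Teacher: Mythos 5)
Your proposal is correct and follows essentially the same route as the paper: induction on $d$, reduction to a complete ring with infinite residue field, the separate treatment of the $m=0$ (i.e.\ $M\cong k$) subcase via Lemma~\ref{lem:finite-injdim} and the finiteness of $\projdim_R(\omega)$, and for $m\ge 1$ the passage modulo a superficial regular element $x$ combined with $\overline{\Omega_m^{R}(k)} \cong \Omega_m^{\overline{R}}(k) \oplus \Omega_{m-1}^{\overline{R}}(k)$ and Krull--Schmidt to land in the induction hypothesis. The only (harmless) divergence is the base case $d=0$: the paper quotes external results of [Gho, Theorems~5.1 and 5.5], whereas you argue it directly by observing that summands of syzygies of $k$ are then Ulrich and invoking Corollary~\ref{cor:charac-Gor-via-Ulrich-mod} and Theorem~\ref{thm:Ulrich-mod-is-test-mod-for-mcm-mods}(ii),(iii) with $N=\omega$, which is a valid and more self-contained alternative.
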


\begin{proof}
	(i) $ \Rightarrow $ (ii): If $ R $ is Gorenstein, then $ \Ext_R^i(M,R) = 0 $ for all $ i \ge d + 1 $.
	
	(i) $ \Rightarrow $ \{(iii) and (iv)\}: If $ R $ is Gorenstein, then $ \omega \cong R $, and hence
	\[
		\Tor_i^R(\omega, M) = 0 = \Ext_R^i(\omega, M) \quad\mbox{for all }i \ge 1.
	\]
	
	(ii) $ \Rightarrow $ (i), (iii) $ \Rightarrow $ (i) and (iv) $ \Rightarrow $ (i):  As before, we may without loss of generality assume that $ R $ is complete, and the residue field $ k $ is infinite. We prove these implications by using induction on $ d $. If $ d = 0 $, then these implications follow from \cite[Theorems~5.1 and 5.5]{Gho}. So we assume that $ d \ge 1 $, and these implications hold true for all such rings of dimension smaller than $ d $.
	
	Suppose that $ M $ is a direct summand of $ \Omega_m^R(k) $ for some $ m \ge 0 $.  Let us first consider the case $ m = 0 $. In this case, $ M $ must be equal to $ k $. Then the statement (ii) gives that $ \Ext_R^i(k,R) = 0 $ for some $ (d+1) $ consecutive values of $ i \ge 1 $. Hence, by Lemma~\ref{lem:finite-injdim}, we obtain that $ \injdim_R(R) $ is finite, i.e., $ R $ is Gorenstein. If $ m = 0 $, then the statement (iii) (resp. (iv)) yields that $ \Tor_i^R(\omega,k) = 0 $ (resp. $ \Ext_R^i(\omega,k) = 0 $) for some $ i \ge 1 $. In either case, we obtain that $ \projdim_R(\omega) $ is finite, which implies that $ R $ is Gorenstein. Thus all three implications hold true when $ m = 0 $. So we may assume that $ m \ge 1 $.
	
	Since the residue field of $ R $ is infinite and $ d \ge 1 $, there exists an element $ x \in \fm \smallsetminus \fm^2 $ which is both $ R $-superficial and $ R $-regular. Set $ \overline{(-)} := (-) \otimes_R R/(x) $. By Lemma~\ref{lem:min-mult-mod-x}(i), we have that $ \oR $ is a $ (d - 1) $-dimensional CM local ring of minimal multiplicity. Since $ M $ is a direct summand of $ \Omega_m^R(k) $, we get that $ \oM $ is a direct summand of $ \overline{\Omega_m^R(k)} $. We fix an indecomposable direct summand $ M' $ of $ \oM $. As in the proof of Theorem~\ref{thm:min-mult-charc-reg-direct-summand}, one obtains that
	\begin{equation}\label{eqn1-Gor-charc}
		M' \mbox{ is isomorphic to a direct summand of } \Omega_m^{\oR}(k) \mbox{ or } \Omega_{m-1}^{\oR}(k).
	\end{equation}
	Since $ x $ is $ R $-regular and $ m \ge 1 $, we get that $ x $ is $ \Omega_m^R(k) $-regular, and hence $ x $ is $ M $-regular. Since $ \omega $ is an MCM $ R $-module, $ x $ is $ \omega $-regular as well. Therefore, in view of Lemma~\ref{lem:Ext-Tor-mod-x}, the statements (ii), (iii) and (iv) yield that $ \Ext_{\oR}^i(\oM,\oR) = 0 $, $ \Tor_i^{\oR}(\overline{\omega},\oM) = 0 $ and $ \Ext_{\oR}^i(\overline{\omega},\oM) = 0 $ (respectively) for some $ d $ consecutive values of $ i \ge 1 $, which imply that
	\begin{equation}\label{eqn2-Gor-charc}
		\Ext_{\oR}^i(M',\oR) = 0, ~ \Tor_i^{\oR}(\overline{\omega},M') = 0 ~\mbox{ and }~ \Ext_{\oR}^i(\overline{\omega},M') = 0
	\end{equation}
	(respectively) for some $ d $ $ ( = \dim(\oR) + 1) $ consecutive values of $ i \ge 1 $. It is a well-known fact that $ \overline{\omega} $ is a canonical module of $ \oR $. Thus, from each of (ii), (iii) and (iv), in view of \eqref{eqn1-Gor-charc} and \eqref{eqn2-Gor-charc}, by the induction hypothesis, we obtain that $ \oR $ is Gorenstein, and hence $ R $ is Gorenstein as $ x $ is an $ R $-regular element.
\end{proof}

\section{Vanishing of Exts and Tors over deformation of CM local rings of minimal multiplicity}\label{sec:vanishing-ext-tor-deformation}
 
 Suppose $ S $ is a quotient of a $ d $-dimensional CM local ring of minimal multiplicity by a regular 
 sequence of length $ c $. Let $ M $ and $ N $ be MCM $ S $-modules. In this section, 
 it is shown that if $ \Ext_S^i(M,N) = 0 $ for some $ (d+c+1) $ consecutive values of $ i \ge 2 $, 
 then $ \Ext_S^i(M,N) = 0 $ for all $ i \ge 1 $. Moreover, if this holds true, 
 then either $ \projdim_R(M) $ or $ \injdim_R(N) $ is finite; see 
 Corollary~\ref{cor:vanshing-ext-over-deformation} for more general case when
 $ M $ and $ N $ are not necessarily MCM. We also prove that if $ \Tor_i^S(M,N) = 0 $ for some $ (d+c+1) $ consecutive
 values of $ i \ge c + 2 $, then $ \Tor_i^S(M,N) = 0 $ for all $ i \ge c + 1 $, and either
 $ \projdim_R(M) $ or $ \projdim_R(N) $ is finite; see Corollary~\ref{cor:vanshing-tor-over-deformation} for
 more general case. Let us fix the following hypothesis for this section.
 
 \begin{hypothesis}\label{hyp:deformation}
 	Let $ (R,\fm,k) $ be a $ d $-dimensional CM local ring of minimal multiplicity. Set $ S := R/(f_1,\ldots,f_c) $, where $ f_1,\ldots,f_c $ is an $ R $-regular sequence.
 \end{hypothesis}
 
 We now prove our main result of this section for Ext-modules.
 
 \begin{theorem}\label{thm:vanshing-ext-over-deformation}
 	Along with {\rm Hypothesis~\ref{hyp:deformation}}, further assume that $ M $ and $ N $ are MCM $ S $-modules. Then the following statements are equivalent:
 	\begin{enumerate}[{\rm (i)}]
 		\item $ \Ext_S^i(M,N) = 0 $ for some $ (d+c+1) $ consecutive values of $ i \ge 2 $.
 		\item $ \Ext_S^i(M,N) = 0 $ for all $ i \ge 1 $.
 	\end{enumerate}
 	Moreover, if this holds true, then either $ \projdim_R(M) $ or $ \injdim_R(N) $ is finite.
 \end{theorem}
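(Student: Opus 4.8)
The plan is to transfer the vanishing hypothesis from $S$ down to the minimal-multiplicity ring $R$, invoke Theorem~\ref{thm:min-mult-ext-test} to produce the dichotomy ``$\projdim_R(M)<\infty$ or $\injdim_R(N)<\infty$'', and then treat each alternative separately to recover vanishing in all positive degrees. The implication (ii)$\Rightarrow$(i) is trivial, so I would assume (i), say $\Ext_S^i(M,N)=0$ for $n\le i\le n+d+c$ with $n\ge 2$.

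First I would descend along the regular sequence. Write $S_0:=R$ and $S_j:=R/(f_1,\ldots,f_j)=S_{j-1}/(\bar f_j)$, where $\bar f_j$ is an $S_{j-1}$-regular element, and apply Lemma~\ref{lem:long-exact-seq-ext} to each step $S_{j-1}\to S_j$. From the exact strand $\Ext_{S_j}^i(M,N)\to\Ext_{S_{j-1}}^i(M,N)\to\Ext_{S_j}^{i-1}(M,N)$ one reads off that $\Ext_{S_j}^i(M,N)=0=\Ext_{S_j}^{i-1}(M,N)$ forces $\Ext_{S_{j-1}}^i(M,N)=0$; hence each descent turns a block of $r$ consecutive vanishing $\Ext_{S_j}$'s into a block of $r-1$ consecutive vanishing $\Ext_{S_{j-1}}$'s, losing only the bottom index. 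After $c$ such descents, $\Ext_R^i(M,N)=0$ for the $d+1$ consecutive values $n+c\le i\le n+d+c$. Since $M$ is MCM over $S$, as an $R$-module $\depth_R(M)=\dim(S)=d-c$, so all these $i$ satisfy $i\ge n+c\ge c+2=d-\depth_R(M)+2$; thus Theorem~\ref{thm:min-mult-ext-test} applies and yields that $\projdim_R(M)$ or $\injdim_R(N)$ is finite. This already establishes the ``Moreover'' assertion, and it remains to deduce (ii) in each case.

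In the case $\injdim_R(N)<\infty$: since $N$ is an $S$-module and $f_1,\ldots,f_c$ is $R$-regular, the dimension shift for injective dimension along a regular sequence gives $\injdim_S(N)=\injdim_R(N)-c<\infty$, hence $\injdim_S(N)=\depth(S)=d-c$ by Bass's theorem. As $M$ is MCM over $S$ we also have $\depth_S(M)=d-c$, so Ischebeck's theorem (e.g.\ \cite[Exercise~3.1.24]{BH98}) forces $\Ext_S^i(M,N)=0$ for all $i>\depth(S)-\depth_S(M)=0$, which is (ii). In the case $\projdim_R(M)<\infty$: because $M$ is an $S$-module with $S=R/(f_1,\ldots,f_c)$ and $f_1,\ldots,f_c$ an $R$-regular sequence, the trivial quasi-deformation with flat part $\mathrm{id}_S$ and deformation $R\twoheadrightarrow S$ witnesses $\cid_S(M)<\infty$, and then $\cid_S(M)=\depth(S)-\depth_S(M)=0$ since $M$ is MCM over $S$; in particular $\cx_S(M)$ is finite (and $\le c$), and $\Ext_S^{\gg 0}(M,N)$ is Noetherian over the ring of cohomology operators $S[\chi_1,\ldots,\chi_c]$. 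The Avramov--Buchweitz rigidity theorem for modules of finite complete intersection dimension (\cite[Theorems~4.7 and 4.9]{AB00}; cf.\ \cite[9.3.6]{Avr98}) then applies: the given block of $d+c+1\ge\cx_S(M)+1$ consecutive vanishing $\Ext_S^i(M,N)$'s forces these $\Ext$'s to vanish throughout the block and beyond, and since $M$ is MCM of finite CI-dimension over $S$ (hence $\gdim_S(M)=0$, so stable and ordinary $\Ext$ agree in positive degrees) we get $\Ext_S^i(M,N)=0$ for all $i\ge 1$, completing (ii).

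The step I expect to be the main obstacle is the last one: converting ``finite projective dimension over the ambient ring $R$'' into a genuine rigidity statement over $S$ needs the complete-intersection-dimension / cohomology-operator machinery — this is precisely where the advertised analogue of Murthy's and Avramov--Buchweitz's theorems enters — and some care is required to propagate the vanishing all the way down to $i=1$ rather than merely $i\gg 0$, which is exactly what the hypothesis ``$M$ is MCM over $S$'' (equivalently $\gdim_S(M)=0$) buys. The descent step and the finite-injective-dimension case are comparatively routine. At the outset one may harmlessly pass to the case of infinite residue field via \ref{para:res-field-infinite} (and to $R$ complete), though neither reduction is actually needed for the argument above.
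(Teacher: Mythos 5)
Your reduction to the base ring and the ``Moreover'' clause are fine: repeated use of Lemma~\ref{lem:long-exact-seq-ext} does convert the block of $d+c+1$ vanishing $\Ext_S$'s starting at $i\ge 2$ into $d+1$ consecutive vanishing $\Ext_R^i(M,N)$'s with $i\ge c+2=d-\depth_R(M)+2$, so Theorem~\ref{thm:min-mult-ext-test} yields that $\projdim_R(M)$ or $\injdim_R(N)$ is finite. The genuine gap is in the branch $\injdim_R(N)<\infty$: the asserted change of rings $\injdim_S(N)=\injdim_R(N)-c$ is false. Finite injective dimension does not descend from $R$ to $S=R/(f_1,\ldots,f_c)$; the valid implication goes the other way (finite over $S$ implies finite over $R$, cf.\ Proposition~\ref{prop:basic-facts-can-mod}(iii)). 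For instance, with $R=k[[t]]$ (regular, hence of minimal multiplicity), $c=1$, $f_1=t^2$ and $N=k$, one has $\injdim_R(N)=1$ but $\injdim_S(N)=\infty$. The same example with $M=N=k$ (both MCM over the Artinian ring $S$) shows that conclusion (ii) is not a formal consequence of ``$\injdim_R(N)<\infty$ and $M,N$ MCM over $S$'': $\Ext_S^i(k,k)\neq 0$ for all $i$. So in this branch you must re-use hypothesis (i) in an essential way, and Ischebeck/Bass over $S$ cannot be invoked because $N$ need not have finite injective dimension over $S$ at all. This is exactly where the paper's argument does its real work: it inducts on $c$ through $R'=R/(f_1,\ldots,f_{c-1})$, replaces $N$ by an MCM approximation $0\to\omega_{R'}^l\to N'\to N\to 0$ and $M$ by a syzygy over $R'$, uses $\Ext_{R'}^i(M,\omega_{R'})=0$ for $i\neq 1$ and, in the base case, the structure theorem $N\cong\omega_R^r$ for MCM modules of finite injective dimension, and finally the two-periodicity $\Ext_S^i(M,N)\cong\Ext_S^{i+2}(M,N)$ (coming from $\Ext_{R'}^i(M,N)=0$ for all $i\ge 2$) to push the given block of zeros down to all $i\ge 1$.

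Your branch $\projdim_R(M)<\infty$ is a genuinely different, and in outline workable, route: the deformation $R\twoheadrightarrow S$ does give $\cid_S(M)<\infty$, hence $\cid_S(M)=\depth(S)-\depth_S(M)=0$ and $\cx_S(M)\le c$, and a rigidity theorem for modules of finite CI-dimension would then give $\Ext_S^i(M,N)=0$ for all $i\ge 1$ from a block of length at least $\cx_S(M)+1$ in degrees $\ge 1$. But check the citation carefully: \cite[Theorems~4.7 and 4.9]{AB00} as quoted in the introduction concern complete intersections, and $S$ here need not be one, so you need the statement in the finite-CI-dimension generality (available through the support-variety/cohomology-operator machinery, but it must be quoted in that form). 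The paper's elementary induction avoids this machinery and treats both alternatives uniformly. As written, however, the injective-dimension branch of your argument fails, so the implication (i)$\Rightarrow$(ii) is not established.
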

 
 \begin{proof}
 	We may assume that $ R $ (and so $ S $) is complete. The implication (ii) $ \Rightarrow $ (i) follows trivially. So we need to prove the implication (i) $ \Rightarrow $ (ii). Suppose that $ \Ext_S^i(M,N) = 0 $ for some $ (d+c+1) $ consecutive values of $ i \ge 2 $. We show that $ \Ext_S^i(M,N) = 0 $ for all $ i \ge 1 $. Moreover, we prove that either $ \projdim_R(M) $ or $ \injdim_R(N) $ is finite. We prove these assertions by using induction on $ c $.
 	
 	Let us first consider the base case $ c = 0 $. In this case, $ S = R $. Therefore, by virtue of Theorem~\ref{thm:min-mult-ext-test}, either $ \projdim_R(M) $ or $ \injdim_R(N) $ is finite. If $ \projdim_R(M) $ is finite, then by the Auslander-Buchsbaum Formula, we get that $ M $ is a free $ R $-module, and hence $ \Ext_S^i(M,N) = 0 $ for all $ i \ge 1 $. In the other case, i.e., if $ \injdim_R(N) $ is finite, then in view of Proposition~\ref{prop:basic-facts-can-mod}(i), we have that $ N \cong \omega_R^r $ for some $ r \ge 1 $. Hence, by Proposition~\ref{prop:basic-facts-can-mod}(ii), we obtain that $ \Ext_S^i(M,N) = 0 $ for all $ i \ge 1 $. This completes the proof for the base case. We now assume that $ c \ge 1 $.
 	
 	Set $ R' := R/(f_1,\ldots,f_{c-1}) $. Clearly, $ S = R'/(f_c) $. Since $ \Ext_S^i(M,N) = 0 $ for some $ (d+c+1) $ consecutive values of $ i \ge 2 $, in view of Lemma~\ref{lem:long-exact-seq-ext}, we get that
 	\begin{equation}\label{eqn1-thm:vans-ext-over-defor}
	 	\Ext_{R'}^i(M,N) = 0 \mbox{ for some $ (d+c) $ consecutive values of } i \ge 3.
 	\end{equation}
 	Note that $ \depth_{R'}(M) = \depth_S(M) = \dim(S) = \dim(R') - 1 $. Similarly, we have that $ \depth_{R'}(N) = \dim(R') - 1 $. By virtue of \cite[Theorem~A]{AB89}, we have an MCM approximation of $ N $ as $ R' $-module:
 	\begin{equation}\label{eqn2-thm:vans-ext-over-defor}
	 	0 \lra Y \lra N' \lra N \lra 0.
 	\end{equation}
 	That is, \eqref{eqn2-thm:vans-ext-over-defor} is a short exact sequence of $ R' $-modules, where $ N' $ is an MCM $ R' $-module, and $ \injdim_{R'}(Y) $ is finite. Since $ \depth_{R'}(N) = \dim(R') - 1 $, by the Depth Lemma, $ Y $ is an MCM $ R' $-module. Therefore, in view of Proposition~\ref{prop:basic-facts-can-mod}(i), $ Y \cong \omega_{R'}^l $ for some $ l \ge 1 $. Since $ M $ is an MCM $ S $-module, we get that $ M $ is a CM $ R' $-module of dimension $ \dim(R') - 1 $. Hence, by Proposition~\ref{prop:basic-facts-can-mod}(ii), we obtain that
 	\begin{equation}\label{eqn3-thm:vans-ext-over-defor}
	 	\Ext_{R'}^i(M,Y) = 0 \quad \mbox{for all } i \neq 1.
 	\end{equation}
 	The short exact sequence \eqref{eqn2-thm:vans-ext-over-defor} yields the following long exact sequence:
 	\begin{align}\label{eqn4-thm:vans-ext-over-defor}
	 	\cdots \lra & \Ext_{R'}^i(M,Y) \lra \Ext_{R'}^i(M,N') \lra \Ext_{R'}^i(M,N) \\ \lra &\Ext_{R'}^{i+1}(M,Y) \lra \cdots.\nonumber
 	\end{align}
 	Therefore, in view of \eqref{eqn1-thm:vans-ext-over-defor} and \eqref{eqn3-thm:vans-ext-over-defor}, we get that
 	\begin{equation}\label{eqn5-thm:vans-ext-over-defor}
	 	\Ext_{R'}^i(M,N') = 0 \mbox{ for some $ (d+c) $ consecutive values of } i \ge 3.
 	\end{equation}
 	We now consider a short exact sequence of $ R' $-modules:
 	\begin{equation}\label{eqn6-thm:vans-ext-over-defor}
	 	0 \lra M' \lra F \lra M \lra 0,
 	\end{equation}
 	where $ F $ is a free $ R' $-module. Since $ \depth_{R'}(M) = \dim(R') - 1 $, by the Depth Lemma, $ M' $ is an MCM $ R' $-module. The short exact sequence \eqref{eqn6-thm:vans-ext-over-defor} yields the following long exact sequence:
 	\begin{align}\label{eqn7-thm:vans-ext-over-defor}
	 	\cdots \lra & \Ext_{R'}^i(M,N') \lra \Ext_{R'}^i(F,N') \lra \Ext_{R'}^i(M',N') \\ \lra &\Ext_{R'}^{i+1}(M,N') \lra \cdots.\nonumber
 	\end{align}
 	Note that $ \Ext_{R'}^i(F,N') = 0 $ for all $ i \ge 1 $. Hence, in view of \eqref{eqn5-thm:vans-ext-over-defor} and \eqref{eqn7-thm:vans-ext-over-defor}, we obtain that $ \Ext_{R'}^i(M',N') = 0 $ for some $ (d+(c-1)+1) $ consecutive values of $ i \ge 2 $. Therefore, since $ M' $ and $ N' $ are MCM modules over $ R' = R/(f_1,\ldots,f_{c-1}) $, by the induction hypothesis, we get that
 	\begin{equation}\label{eqn8-thm:vans-ext-over-defor}
	 	\Ext_{R'}^i(M',N') = 0 \quad \mbox{for all } i \ge 1.
 	\end{equation}
 	We also obtain that either $ \projdim_R(M') $ or $ \injdim_R(N') $ is finite.
 	
 	We now show that $ \Ext_S^i(M,N) = 0 $ for all $ i \ge 1 $. In view of \eqref{eqn7-thm:vans-ext-over-defor} and \eqref{eqn8-thm:vans-ext-over-defor}, we obtain that $ \Ext_{R'}^i(M,N') = 0 $ for all $ i \ge 2 $. Hence \eqref{eqn3-thm:vans-ext-over-defor} and \eqref{eqn4-thm:vans-ext-over-defor} yield that $ \Ext_{R'}^i(M,N) = 0 $ for all $ i \ge 2 $. Therefore, by virtue of Lemma~\ref{lem:long-exact-seq-ext}, we get that
 	\begin{equation}\label{eqn9-thm:vans-ext-over-defor}
	 	\Ext_S^i(M,N) \cong \Ext_S^{i+2}(M,N) \quad \mbox{for all } i \ge 1.
 	\end{equation}
 	Since $ \Ext_S^i(M,N) = 0 $ for some $ d+c+1 $ $ (\ge 2) $ consecutive values of $ i \ge 2 $, the isomorphisms \eqref{eqn9-thm:vans-ext-over-defor} yield that $ \Ext_S^i(M,N) = 0 $ for all $ i \ge 1 $.
 	
 	It remains to show that either $ \projdim_R(M) $ or $ \injdim_R(N) $ is finite. We show this by considering the following cases:
 	
 	{\bf Case 1.} Suppose $ \projdim_R(M') $ is finite. Then, in view of the short exact sequence \eqref{eqn6-thm:vans-ext-over-defor}, we obtain that $ \projdim_R(M) $ is finite because $ F $ is a free $ R' $-module and projective dimension of $ R' = R/(f_1,\ldots,f_{c-1}) $ as an $ R $-module is finite.
 	
 	{\bf Case 2.} Suppose $ \injdim_R(N') $ is finite. Consider the exact sequence \eqref{eqn2-thm:vans-ext-over-defor}:
 	\[
	 	0 \lra Y \lra N' \lra N \lra 0,
 	\]
 	where $ Y \cong \omega_{R'}^l $ has finite injective dimension as an $ R $-module; see Proposition~\ref{prop:basic-facts-can-mod}(iii). Therefore $ \injdim_R(N) $ is finite.
 \end{proof}
 
 As a corollary of Theorem~\ref{thm:vanshing-ext-over-deformation}, we obtain the following:
 
 \begin{corollary}\label{cor:vanshing-ext-over-deformation}
 	Along with {\rm Hypothesis~\ref{hyp:deformation}}, assume that $ M $ and $ N $ are $ S $-modules. Set $ i_0 := \dim(S) - \depth(M) $. Then the following statements are equivalent:
 	\begin{enumerate}[{\rm (i)}]
 		\item $ \Ext_S^i(M,N) = 0 $ for some $ (d+c+1) $ consecutive values of $ i \ge i_0 + 2 $.
 		\item $ \Ext_S^i(M,N) = 0 $ for all $ i \ge i_0 + 1 $.
 	\end{enumerate}
 	Moreover, if this holds true, then either $ \projdim_R(M) $ or $ \injdim_R(N) $ is finite.
 \end{corollary}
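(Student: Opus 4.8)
The plan is to derive this by induction on $c$ from the maximal Cohen-Macaulay case (Theorem~\ref{thm:vanshing-ext-over-deformation}), whose base case is Theorem~\ref{thm:min-mult-ext-test}. As usual one may pass to the completion (so that $R$, hence $S$, has a canonical module) and assume $k$ infinite, and one may assume $M,N\neq 0$. The implication (ii)$\Rightarrow$(i) is immediate, since $i_0+2>i_0+1$. For (i)$\Rightarrow$(ii): because $i_0=\dim S-\depth_S(M)$, the Depth Lemma shows that $M_0:=\Omega_{i_0}^S(M)$ is an MCM $S$-module, and Lemma~\ref{lem:syz-ext-tor}(i) gives $\Ext_S^i(M,N)\cong\Ext_S^{\,i-i_0}(M_0,N)$ for all $i\ge i_0+1$; so it suffices to prove $\Ext_S^j(M_0,N)=0$ for all $j\ge 1$, and, alongside, the finiteness of $\projdim_R(M)$ or $\injdim_R(N)$.

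For the base case $c=0$ (so $S=R$ and $i_0=d-\depth(M)$), hypothesis (i) is exactly the hypothesis of Theorem~\ref{thm:min-mult-ext-test}, which already yields that $\projdim_R(M)$ or $\injdim_R(N)$ is finite. If $\projdim_R(M)<\infty$, then $\projdim_R(M)=d-\depth_R(M)=i_0$ by the Auslander-Buchsbaum Formula, so $\Ext_R^i(M,N)=0$ for all $i\ge i_0+1$. If instead $\injdim_R(N)<\infty$, then $\Ext_R^j(M_0,N)=0$ for every $j\ge 1$ since $M_0$ is MCM (see \cite[3.1.24]{BH98}), and again $\Ext_R^i(M,N)=0$ for all $i\ge i_0+1$.

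For the inductive step I would put $R':=R/(f_1,\ldots,f_{c-1})$, so that $S=R'/(f_c)$, $\dim R'=d-c+1$, and $i_0':=\dim R'-\depth_{R'}(M)=i_0+1$. Applying the change-of-rings long exact sequence of Lemma~\ref{lem:long-exact-seq-ext} to $S=R'/(f_c)$, hypothesis (i) gives $\Ext_{R'}^i(M,N)=0$ for some $(d+c)$ consecutive values of $i\ge i_0+3=i_0'+2$; since $R'$ is again a deformation of the minimal-multiplicity ring $R$ by a regular sequence of length $c-1$, the inductive hypothesis applies and yields $\Ext_{R'}^i(M,N)=0$ for all $i\ge i_0'+1=i_0+2$, together with the finiteness of $\projdim_R(M)$ or $\injdim_R(N)$. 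Feeding this vanishing back into the same long exact sequence produces the $2$-periodicity $\Ext_S^j(M,N)\cong\Ext_S^{\,j+2}(M,N)$ for every $j\ge i_0+1$; since the run of $(d+c+1)\ge 2$ consecutive vanishing Exts from (i) lies in degrees $\ge i_0+1$ and therefore meets both residue classes mod $2$, the periodicity propagates the vanishing to all $j\ge i_0+1$, which is (ii).

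The argument is essentially a careful index-chase, and I expect the main obstacle to be the bookkeeping in the inductive step: the sequence of Lemma~\ref{lem:long-exact-seq-ext} has to be used in one direction to descend from $S$ to $R'$ (losing one consecutive vanishing while keeping the lower bound $i\ge i_0'+2$) and in the other direction to upgrade the full vanishing over $R'$ into $2$-periodicity over $S$, and one must check that the shifts $i_0\mapsto i_0'=i_0+1$ and the change of constants $d+c+1\mapsto d+c$ line up precisely with the inductive statement. The only non-formal ingredient is the vanishing of higher Ext from an MCM module into a module of finite injective dimension, which is what closes the $\injdim_R(N)<\infty$ branch of the base case.
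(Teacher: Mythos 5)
Your argument is correct, but it follows a genuinely different route from the paper's. The paper deduces Corollary~\ref{cor:vanshing-ext-over-deformation} from Theorem~\ref{thm:vanshing-ext-over-deformation} in one shot: it replaces $M$ by the MCM syzygy $\Omega_{i_0}^S(M)$ (as you do), but then replaces $N$ by an MCM approximation $0 \to Y \to X \to N \to 0$ over $S$ (via \cite[Theorem~A]{AB89}), applies Theorem~\ref{thm:vanshing-ext-over-deformation} to the MCM pair $\bigl(\Omega_{i_0}^S(M), X\bigr)$, and finally transfers $\injdim_R(X) < \infty$ back to $\injdim_R(N) < \infty$ using a finite $\omega_S$-resolution of $Y$ together with Proposition~\ref{prop:basic-facts-can-mod}(iii); this is why the paper's proof passes to the completion. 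You instead run a fresh induction on $c$ at the level of arbitrary modules: the base case is exactly Theorem~\ref{thm:min-mult-ext-test} plus the Auslander--Buchsbaum formula in the $\projdim_R(M)<\infty$ branch and the vanishing $\Ext_R^{\ge 1}(\Omega_{i_0}^R(M),N)=0$ against finite injective dimension (\cite[3.1.24]{BH98}) in the other branch, and your inductive step uses Lemma~\ref{lem:long-exact-seq-ext} twice, once to descend from $S$ to $R'$ (your index bookkeeping $i_0' = i_0+1$, $(d+c+1)\mapsto (d+c)$ is right) and once to obtain the two-periodicity $\Ext_S^j \cong \Ext_S^{j+2}$ for $j \ge i_0+1$, which together with the $(d+c+1)\ge 2$ consecutive vanishing degrees closes the argument; the ``Moreover'' clause is simply inherited from the inductive hypothesis since the modules never change. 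Structurally your inductive step mirrors the paper's proof of Theorem~\ref{thm:vanshing-ext-over-deformation}, but because your inductive statement allows non-MCM modules you avoid the replacement of $M,N$ by MCM $R'$-modules (free cover and MCM approximation over $R'$), hence you need no canonical modules and, in fact, no completion at all (that reduction in your write-up is harmless but unnecessary). Two further remarks: your version recovers Theorem~\ref{thm:vanshing-ext-over-deformation} itself as the case $i_0=0$, so it is a self-contained alternative rather than a corollary; and what you gain in economy of machinery you pay for by reproving the induction, whereas the paper's route reuses the theorem as a black box.
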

 
 \begin{proof}
 	We may assume that $R$ (and so $S$) is complete. Let $ \Ext_S^i(M,N) = 0 $ for some $ (d+c+1) $ consecutive values of $ i \ge i_0 + 2 $. We need to show that $ \Ext_S^i(M,N) = 0 $ for all $ i \ge i_0 + 1 $, and either $ \projdim_R(M) $ or $ \injdim_R(N) $ is finite.
 	
 	In view of Lemma~\ref{lem:syz-ext-tor}(i), we obtain that
 	\begin{equation}\label{eqn1-cor:vanshing-ext-over-defor}
	 	\Ext_S^i(M,N) \cong \Ext_S^{i-i_0}\left( \Omega_{i_0}^S(M), N \right) \quad \mbox{for all } i - i_0 \ge 1.
 	\end{equation}
 	So $ \Ext_S^j\left( \Omega_{i_0}^S(M), N \right) = 0 $ for some $ (d+c+1) $ consecutive values of $ j \ge 2 $. Consider an MCM approximation of $ N $, i.e., an exact sequence $ 0 \rightarrow Y \rightarrow X \rightarrow N \rightarrow 0 $ of $ S $-modules, where $X$ is MCM and $ \injdim_S(Y) $ is finite. Since $ \Omega_{i_0}^S(M) $ is an MCM $ S $-module and $ \injdim_S(Y) $ is finite, we obtain that $ \Ext_S^i\left( \Omega_{i_0}^S(M), Y \right) = 0 $ for  all $ i \ge 1 $. Therefore
 	\begin{equation}\label{eqn1.1-cor:vanshing-ext-over-defor}
	 	\Ext_S^i\left( \Omega_{i_0}^S(M), X \right) \cong \Ext_S^i\left( \Omega_{i_0}^S(M), N \right) \quad \text{for all } i \ge 1.
 	\end{equation}
 	It then follows that $ \Ext_S^j\left( \Omega_{i_0}^S(M), X \right) = 0 $ for some $ (d+c+1) $ consecutive values of $ j \ge 2 $. Hence, by virtue of Theorem~\ref{thm:vanshing-ext-over-deformation}, we get that
 	\begin{equation}\label{eqn2-cor:vanshing-ext-over-defor}
	 	\Ext_S^j\left( \Omega_{i_0}^S(M), X \right) = 0  \quad \mbox{for all } j \ge 1.
 	\end{equation}
 	Moreover, we obtain that either $ \projdim_R\left( \Omega_{i_0}^S(M) \right) $ or $ \injdim_R(X) $ is finite. It follows from \eqref{eqn1-cor:vanshing-ext-over-defor}, \eqref{eqn1.1-cor:vanshing-ext-over-defor} and \eqref{eqn2-cor:vanshing-ext-over-defor} that $ \Ext_S^i(M,N) = 0 $ for all $ i \ge i_0 + 1 $. Note that if $ \projdim_R\left( \Omega_{i_0}^S(M) \right) $ is finite, then $ \projdim_R(M) $ is finite because projective dimension of $ S = R/(f_1,\ldots,f_c) $ as an $ R $-module is finite. Since $ \injdim_S(Y) $ is finite, $Y$ has a finite resolution by $\omega_S$; see, e.g., \cite[3.3.28(b)]{BH98}. But, in view of Proposition~\ref{prop:basic-facts-can-mod}(iii), $\omega_S = \omega_R/(f_1, \ldots, f_c)\omega_R$ has finite injective dimension as an $ R $-module. Therefore $ \injdim_R(Y) $ is finite, which yields that $ \injdim_R(N) $ is finite provided $ \injdim_R(X) $ is finite. This completes the proof of the corollary.
 \end{proof}
 
 Here we prove our main result of this section for Tor-modules.
 
 \begin{theorem}\label{thm:vanshing-tor-over-deformation}
 	Along with {\rm Hypothesis~\ref{hyp:deformation}}, further assume that $ M $ and $ N $ are MCM $ S $-modules. Then the following statements are equivalent:
 	\begin{enumerate}[{\rm (i)}]
 		\item $ \Tor_i^S(M,N) = 0 $ for some $ (d+c+1) $ consecutive values of $ i \ge c + 2 $.
 		\item $ \Tor_i^S(M,N) = 0 $ for all $ i \ge c + 1 $.
 	\end{enumerate}
 	Moreover, if this holds true, then either $ \projdim_R(M) $ or $ \projdim_R(N) $ is finite.
 \end{theorem}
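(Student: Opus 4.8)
The plan is to run an induction on $c$, closely following the proof of Theorem~\ref{thm:vanshing-ext-over-deformation}, but replacing the MCM approximations used there by plain free covers (first syzygies over the intermediate ring); this suffices here because the conclusion only involves projective dimensions. The implication (ii)~$\Rightarrow$~(i) is trivial, so the content is (i)~$\Rightarrow$~(ii) together with the ``moreover'' clause. For the base case $c = 0$ we have $S = R$ and $M, N$ MCM, so $\depth_R(M) = \depth_R(N) = d$ and the lower bound $2d - \depth_R(M) - \depth_R(N) + 2$ equals $2$; thus Theorem~\ref{thm:min-mult-tor-test} applies directly to the hypothesis and gives that $\projdim_R(M)$ or $\projdim_R(N)$ is finite. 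By the Auslander--Buchsbaum formula the module in question is then free, so $\Tor_i^R(M,N) = 0$ for all $i \ge 1$; both conclusions hold.

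For the inductive step, assume $c \ge 1$ and put $R' := R/(f_1,\ldots,f_{c-1})$, so $S = R'/(f_c)$ with $f_c$ an $R'$-regular element, $R'$ CM of dimension $d - c + 1$, and $\depth_{R'}(M) = \depth_{R'}(N) = \dim(S) = \dim(R') - 1$. I would first push the vanishing down to $R'$: in the change-of-rings long exact sequence of Lemma~\ref{lem:long-exact-seq-tor} applied to $S = R'/(f_c)$, the three-term piece $\Tor_i^S(M,N) \to \Tor_{i+1}^{R'}(M,N) \to \Tor_{i+1}^S(M,N)$ shows that $\Tor_i^S(M,N) = \Tor_{i+1}^S(M,N) = 0$ forces $\Tor_{i+1}^{R'}(M,N) = 0$; hence $\Tor_j^{R'}(M,N) = 0$ for $d+c$ consecutive values of $j \ge c+3$. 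Next take minimal free covers over $R'$, say $0 \to M' \to F \to M \to 0$ and $0 \to N' \to G \to N \to 0$. Since $f_c$ annihilates $M$ and $N$ but is a nonzerodivisor on $R'$, neither $M$ nor $N$ is $R'$-free, so $M' = \Omega_1^{R'}(M)$ and $N' = \Omega_1^{R'}(N)$ are nonzero, and the Depth Lemma makes them MCM $R'$-modules. Applying Lemma~\ref{lem:syz-ext-tor}(ii) twice transfers the vanishing to $\Tor_j^{R'}(M',N') = 0$ for $d + c = d + (c-1) + 1$ consecutive values of $j \ge c+1 = (c-1)+2$.

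Now $M'$ and $N'$ are MCM modules over $R' = R/(f_1,\ldots,f_{c-1})$, so the induction hypothesis gives $\Tor_i^{R'}(M',N') = 0$ for all $i \ge c$, and that $\projdim_R(M')$ or $\projdim_R(N')$ is finite. Reversing the two dimension shifts then yields $\Tor_i^{R'}(M,N) = 0$ for all $i \ge c+2$; feeding this back into Lemma~\ref{lem:long-exact-seq-tor} forces $\Tor_{i+1}^S(M,N) \cong \Tor_{i-1}^S(M,N)$ for all $i \ge c+2$, i.e.\ $j \mapsto \Tor_j^S(M,N)$ is $2$-periodic for $j \ge c+1$. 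Since the $d+c+1 \ge 2$ consecutive vanishing indices (all $\ge c+2$) include one of each parity, periodicity forces $\Tor_{c+1}^S(M,N) = \Tor_{c+2}^S(M,N) = 0$, hence $\Tor_i^S(M,N) = 0$ for all $i \ge c+1$. Finally, as $\projdim_R(R') = c-1 < \infty$, finiteness of $\projdim_R(M')$ (resp.\ $\projdim_R(N')$) propagates through $0 \to M' \to F \to M \to 0$ (resp.\ $0 \to N' \to G \to N \to 0$) to finiteness of $\projdim_R(M)$ (resp.\ $\projdim_R(N)$).

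The main obstacle is the index bookkeeping: one must arrange the free-cover reductions so that the Tor-vanishing over $R'$ lands in precisely the range of $d+(c-1)+1$ consecutive values at indices $\ge (c-1)+2$ needed to invoke the induction hypothesis, and then check that the $2$-periodicity coming out of the change-of-rings sequence is valid all the way down to index $c+1$, which is exactly what bridges ``vanishing for $i \gg 0$'' and ``vanishing for all $i \ge c+1$''. No new ingredient beyond Theorem~\ref{thm:min-mult-tor-test} and Lemma~\ref{lem:long-exact-seq-tor} is needed; the argument is actually a touch simpler than the Ext case, requiring neither canonical modules nor MCM approximations nor a completeness assumption.
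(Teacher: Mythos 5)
Your proposal is correct and follows essentially the same route as the paper's own proof: induction on $c$, the base case via Theorem~\ref{thm:min-mult-tor-test} plus Auslander--Buchsbaum, the inductive step by passing to $R' = R/(f_1,\ldots,f_{c-1})$ with the change-of-rings sequence of Lemma~\ref{lem:long-exact-seq-tor}, first syzygies (free covers) $M'$, $N'$ over $R'$, and the final $2$-periodicity/parity argument down to index $c+1$. Even your observed simplification is what the paper does in the Tor case (its Ext-style MCM approximations and completeness reduction play no real role there), so the two arguments coincide.
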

 
 \begin{proof}
 	We may assume that $ R $ (and so $ S $) is complete. The implication (ii) $ \Rightarrow $ (i) follows trivially. So we need to prove the implication (i) $ \Rightarrow $ (ii). Suppose that $ \Tor_i^S(M,N) = 0 $ for some $ (d+c+1) $ consecutive values of $ i \ge c + 2 $. We show that $ \Tor_i^S(M,N) = 0 $ for all $ i \ge c + 1 $. Moreover, we prove that either $ \projdim_R(M) $ or $ \projdim_R(N) $ is finite. To prove these assertions, as in the proof of Theorem~\ref{thm:vanshing-ext-over-deformation}, we use induction on $ c $.
 	
 	We first consider the base case $ c = 0 $. In this case, $ S = R $. Therefore, by virtue of Theorem~\ref{thm:min-mult-tor-test}, either $ \projdim_R(M) $ or $ \projdim_R(N) $ is finite. If $ \projdim_R(M) $ is finite, then by the Auslander-Buchsbaum Formula, $ M $ is a free $ R $-module, and hence $ \Tor_i^S(M,N) = 0 $ for all $ i \ge 1 $. In another case, i.e., if $ \projdim_R(N) $ is finite, then $ N $ is a free $ R $-module. In this case also, $ \Tor_i^S(M,N) = 0 $ for all $ i \ge 1 $. This completes the proof for the base case $ c = 0 $. We now assume that $ c \ge 1 $.
 	
 	Set $ R' := R/(f_1,\ldots,f_{c-1}) $. Clearly, $ S = R'/(f_c) $. Since $ \Tor_i^S(M,N) = 0 $ for some $ (d+c+1) $ consecutive values of $ i \ge c + 2 $, in view of Lemma~\ref{lem:long-exact-seq-tor}, we get that
 	\begin{equation}\label{eqn1-thm:vans-tor-over-defor}
	 	\Tor_i^{R'}(M,N) = 0 \mbox{ for some $ (d+c) $ consecutive values of } i \ge c + 3.
 	\end{equation}
 	Note that $ \depth_{R'}(M) = \depth_S(M) = \dim(S) = \dim(R') - 1 $. Similarly, one obtains that $ \depth_{R'}(N) = \dim(R') - 1 $. Consider the following short exact sequences of $ R' $-modules:
 	\begin{equation}\label{eqn2-thm:vans-tor-over-defor}
	 	0 \to M' \to F \to M \to 0 \quad \mbox{ and }\quad 0 \to N' \to G \to N \to 0,
 	\end{equation}
 	where $ F $ and $ G $ are free $ R' $-modules. Clearly, by the Depth Lemma, $ M' $ and $ N' $ are MCM $ R' $-modules. The short exact sequences \eqref{eqn2-thm:vans-tor-over-defor} yield the following long exact sequences:
 	\begin{align}
	 	\cdots \lra & \Tor_{i+1}^{R'}(M',N) \lra \Tor_{i+1}^{R'}(F,N) \lra \Tor_{i+1}^{R'}(M,N)\label{eqn3-thm:vans-tor-over-defor}\\
	 	\lra &\Tor_i^{R'}(M',N) \lra \cdots\nonumber\\
	 	& \quad \quad \quad \quad \quad \quad \quad \quad \mbox{ and}\nonumber\\
	 	\cdots \lra & \Tor_{i+1}^{R'}(M',N') \lra \Tor_{i+1}^{R'}(M',G) \lra \Tor_{i+1}^{R'}(M',N)\label{eqn4-thm:vans-tor-over-defor}\\
	 	\lra &\Tor_i^{R'}(M',N') \lra \cdots\nonumber
 	\end{align}
 	respectively. Note that $ \Tor_i^{R'}(F,N) = 0 = \Tor_i^{R'}(M',G) $ for all $ i \ge 1 $. Therefore, in view of \eqref{eqn1-thm:vans-tor-over-defor} and \eqref{eqn3-thm:vans-tor-over-defor}, we obtain that
 	\begin{equation}\label{eqn5-thm:vans-tor-over-defor}
	 	\Tor_i^{R'}(M',N) = 0 \mbox{ for some $ (d+c) $ consecutive values of } i \ge c + 2.
 	\end{equation}
 	Hence \eqref{eqn4-thm:vans-tor-over-defor} and \eqref{eqn5-thm:vans-tor-over-defor} yield that $ \Tor_i^{R'}(M',N') = 0 $ for some $ ( d+(c-1)+1 ) $ consecutive values of $ i \ge c + 1 $ $ (= (c-1) + 2) $. 	Therefore, since $ M' $ and $ N' $ are MCM modules over $ R' = R/(f_1,\ldots,f_{c-1}) $, by the induction hypothesis, we get that
 	\begin{equation}\label{eqn6-thm:vans-tor-over-defor}
	 	\Tor_i^{R'}(M',N') = 0 \quad \mbox{for all } i \ge (c-1) + 1 ~( = c).
 	\end{equation}
 	We also obtain that either $ \projdim_R(M') $ or $ \projdim_R(N') $ is finite. Hence, in view of the short exact sequences \eqref{eqn2-thm:vans-tor-over-defor}, we get that either $ \projdim_R(M) $ or $ \projdim_R(N) $ is finite (because $ F $ and $ G $ are free $ R' $-modules and projective dimension of $ R' = R/(f_1,\ldots,f_{c-1}) $ as an $ R $-module is finite).
 	
 	It remains to show that $ \Tor_i^S(M,N) = 0 $ for all $ i \ge c + 1 $. In view of \eqref{eqn4-thm:vans-tor-over-defor} and \eqref{eqn6-thm:vans-tor-over-defor}, we obtain that $ \Tor_i^{R'}(M',N) = 0 $ for all $ i \ge c+1 $. Therefore \eqref{eqn3-thm:vans-tor-over-defor} yields that $ \Tor_i^{R'}(M,N) = 0 $ for all $ i \ge c + 2 $. Hence, by virtue of Lemma~\ref{lem:long-exact-seq-tor},
 	\begin{equation}\label{eqn7-thm:vans-tor-over-defor}
	 	\Tor_i^S(M,N) \cong \Tor_{i+2}^S(M,N) \quad \mbox{for all } i \ge c + 1.
 	\end{equation}
 	Since $ \Tor_i^S(M,N) = 0 $ for some $ d+c+1 $ $ (\ge 2) $ consecutive values of $ i \ge c + 2 $, the isomorphisms \eqref{eqn7-thm:vans-tor-over-defor} yield that $ \Tor_i^S(M,N) = 0 $ for all $ i \ge c + 1 $. This completes the proof of the theorem.
 \end{proof}
 
 As a corollary of Theorem~\ref{thm:vanshing-tor-over-deformation}, we obtain the following:
 
 \begin{corollary}\label{cor:vanshing-tor-over-deformation}
 	Along with {\rm Hypothesis~\ref{hyp:deformation}}, further assume that $ M $ and $ N $ are $ S $-modules. Set $ i_0 := 2 \dim(S) - \depth(M) - \depth(N) $. Then the following statements are equivalent:
 	\begin{enumerate}[{\rm (i)}]
 		\item $ \Tor_i^S(M,N) = 0 $ for some $ (d+c+1) $ consecutive values of $ i \ge i_0 + c + 2 $.
 		\item $ \Tor_i^S(M,N) = 0 $ for all $ i \ge i_0 + c + 1 $.
 	\end{enumerate}
 	Moreover, if this holds true, then either $ \projdim_R(M) $ or $ \projdim_R(N) $ is finite.
 \end{corollary}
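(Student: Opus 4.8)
The plan is to deduce this from the MCM case, Theorem~\ref{thm:vanshing-tor-over-deformation}, by replacing $ M $ and $ N $ by suitable syzygy modules over $ S $ --- exactly the way Corollary~\ref{cor:vanshing-ext-over-deformation} was obtained from Theorem~\ref{thm:vanshing-ext-over-deformation}, the only extra wrinkle being that, since Tor is symmetric in its two arguments, one should take syzygies of \emph{both} $ M $ and $ N $ rather than just of $ M $. As always I may assume $ R $, and hence $ S $, is complete; I may also assume $ M $ and $ N $ are nonzero, the zero case being trivial. Note that $ S = R/(f_1,\ldots,f_c) $ is a CM local ring with $ \dim(S) = d - c $, so $ a := \dim(S) - \depth(M) \ge 0 $ and $ b := \dim(S) - \depth(N) \ge 0 $, and the Depth Lemma shows $ \Omega_a^S(M) $ and $ \Omega_b^S(N) $ are (nonzero) MCM $ S $-modules; moreover $ i_0 = a + b $.

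Applying Lemma~\ref{lem:syz-ext-tor}(ii) twice, together with the symmetry of Tor, I obtain isomorphisms
\[
\Tor_i^S(M,N) \cong \Tor_{i - i_0}^S\big(\Omega_a^S(M),\, \Omega_b^S(N)\big) \quad \text{for all } i \ge i_0 + 1 .
\]
So the hypothesis in (i) translates, after shifting indices down by $ i_0 $, into the statement that $ \Tor_j^S\big(\Omega_a^S(M), \Omega_b^S(N)\big) = 0 $ for some $ (d+c+1) $ consecutive values of $ j \ge c + 2 $. Since $ \Omega_a^S(M) $ and $ \Omega_b^S(N) $ are MCM $ S $-modules, Theorem~\ref{thm:vanshing-tor-over-deformation} then yields that $ \Tor_j^S\big(\Omega_a^S(M), \Omega_b^S(N)\big) = 0 $ for all $ j \ge c+1 $, and that $ \projdim_R\big(\Omega_a^S(M)\big) $ or $ \projdim_R\big(\Omega_b^S(N)\big) $ is finite. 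Reading the displayed isomorphisms backwards --- which is legitimate since $ i_0 + c + 1 \ge i_0 + 1 $ --- gives $ \Tor_i^S(M,N) = 0 $ for all $ i \ge i_0 + c + 1 $, i.e.\ (ii). The implication (ii) $ \Rightarrow $ (i) is immediate.

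It remains to promote finiteness of projective dimension over $ R $ from a syzygy module back to the module itself. For this I would use the short exact sequences $ 0 \to \Omega_{n+1}^S(M) \to F_n \to \Omega_n^S(M) \to 0 $ with each $ F_n $ a free $ S $-module: since $ f_1,\ldots,f_c $ is an $ R $-regular sequence, $ \projdim_R(S) < \infty $, hence $ \projdim_R(F_n) < \infty $, and a descending induction on $ n $ shows that $ \projdim_R\big(\Omega_a^S(M)\big) < \infty $ forces $ \projdim_R(M) < \infty $; the same argument applies to $ N $. I do not expect any genuine difficulty here --- the whole argument is a mechanical transcription of the Ext case --- so the one thing to watch is the index bookkeeping: that the threshold $ i \ge i_0 + c + 2 $ in (i) corresponds precisely to $ j \ge c + 2 $ after the shift by $ i_0 $, and that the conclusion threshold $ i_0 + c + 1 $ still lies in the range $ i \ge i_0 + 1 $ where the syzygy isomorphism is available.
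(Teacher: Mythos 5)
Your proposal is correct and follows essentially the same route as the paper: pass to the MCM syzygies $\Omega_{\dim(S)-\depth(M)}^S(M)$ and $\Omega_{\dim(S)-\depth(N)}^S(N)$ via Lemma~\ref{lem:syz-ext-tor}(ii), apply Theorem~\ref{thm:vanshing-tor-over-deformation}, and transfer finiteness of projective dimension back using $\projdim_R(S)<\infty$. The index bookkeeping matches the paper's proof exactly.
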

 
 \begin{proof}
 	Let $ \Tor_i^S(M,N) = 0 $ for some $ (d+c+1) $ consecutive values of $ i \ge i_0 + c + 2 $. Set $ c_M := \dim(S) - \depth(M) $ and $ c_N := \dim(S) - \depth(N) $. Then $ i_0 = c_M + c_N $. In view of Lemma~\ref{lem:syz-ext-tor}(ii), we obtain that
 	\begin{equation}\label{eqn1-cor:vanshing-tor-over-defor}
	 	\Tor_i^S(M,N) \cong \Tor_{i-c_M}^S\left( \Omega_{c_M}^S(M), N \right)
	 	\cong \Tor_{i - i_0}^S\left( \Omega_{c_M}^S(M), \Omega_{c_N}^S(N) \right)
 	\end{equation}
 	for all $ i - i_0 \ge 1 $. Therefore $ \Tor_j^S\left( \Omega_{c_M}^S(M), \Omega_{c_N}^S(N) \right) = 0 $ for some $ (d+c+1) $ consecutive values of $ j \ge c + 2 $. Since $ \Omega_{c_M}^S(M) $ and $ \Omega_{c_N}^S(N) $ are MCM $ S $-modules, by virtue of Theorem~\ref{thm:vanshing-tor-over-deformation}, we get that
 	\begin{equation}\label{eqn2-cor:vanshing-tor-over-defor}
	 	\Tor_j^S\left( \Omega_{c_M}^S(M), \Omega_{c_N}^S(N) \right) = 0  \quad \mbox{for all } j \ge c + 1.
 	\end{equation}
 	We also obtain that either $ \projdim_R\left( \Omega_{c_M}^S(M) \right) $ or $ \projdim_R\left( \Omega_{c_N}^S(N) \right) $ is finite, which yields that either $ \projdim_R(M) $ or $ \projdim_R(N) $ is finite because projective dimension of $ S = R/(f_1,\ldots,f_c) $ as an $ R $-module is finite. It follows from \eqref{eqn1-cor:vanshing-tor-over-defor} and \eqref{eqn2-cor:vanshing-tor-over-defor} that $ \Tor_i^S(M,N) = 0 $ for all $ i \ge i_0 + c + 1 $.
 \end{proof}
 
 \begin{remark}
 	With the hypotheses as in Corollary~\ref{cor:vanshing-tor-over-deformation}, $ \Tor_i^S(M,N) = 0 $ 
 	for all $ i \gg 0 $ does not necessarily imply that either $ \projdim_S(M) $ or $ \projdim_S(N) $ is finite, 
 	due to an example of Jorgensen \cite[4.2]{Jor99b}, where $ S $ is even a local complete intersection 
 	ring of codimension $ 2 $. Then, by virtue of \cite[Remark~6.3]{AB00}, we observe that
 	the same example works for Ext-modules also, i.e., $ \Ext_S^i(M,N) = 0 $ for all $ i \gg 0 $ 
 	does not necessarily imply that either  $ \projdim_S(M) $ or $ \injdim_S(N) $ is finite.
 	However, if $ R $ is a non-Gorenstein CM local ring with minimal multiplicity and infinite residue field, by \cite[3.1 and 4.7]{NT}, there is an $ R $-regular sequence $ \underline{y} $ such that $ S := R/(\underline{y}) $ is a fiber product. Then, by \cite[Corollaries~6.2 and 6.3]{NT}, the vanishing of $ \Tor^S_i(M,N) $ (resp. $ \Ext_S^i(M,N) $) for all $ i \gg 0 $ implies the finiteness of projective or injective dimensions of the modules $ M $ and $ N $ over $ S $.
 \end{remark}
 
 \section{Applications}\label{sec:application}
 
 In this section, we assume the following:
 
\begin{hypothesis}\label{hyp:deformation-app}
 	Let $ (R,\fm,k) $ be a $ d $-dimensional CM local ring of minimal multiplicity.
 	Set $ S := R/(f_1,\ldots,f_c) $, where $ f_1,\ldots,f_c $ is an $ R $-regular sequence. Also assume that $R$ has a canonical module $\omega_R$.
 	So $\omega_S = \omega_R/(f_1, \ldots, f_c)\omega_R$ is a canonical module of $S$.
 \end{hypothesis}

Our first application is that conjecture of Tachikawa holds true for $ S $. In particular, we prove the following:

\begin{theorem}\label{Tachikawa}
	Along with {\rm Hypothesis~\ref{hyp:deformation-app}}, if $\Ext^i_S(\omega_S, S) = 0 $ for all $i \gg 0$, then $S$ is Gorenstein.
\end{theorem}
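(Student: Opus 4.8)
The plan is to apply Theorem~\ref{thm:vanshing-ext-over-deformation} to the two MCM $S$-modules $M := \omega_S$ and $N := S$, and then to push the resulting finiteness of a homological dimension over $R$ down the regular sequence $f_1,\ldots,f_c$, concluding that $R$ itself is Gorenstein; the Gorenstein property of $S = R/(f_1,\ldots,f_c)$ is then automatic, since $S$ is a quotient of a Gorenstein ring by a regular sequence. Concretely, the hypothesis $\Ext_S^i(\omega_S,S) = 0$ for all $i \gg 0$ in particular gives such vanishing for some $(d+c+1)$ consecutive values of $i \ge 2$, so Theorem~\ref{thm:vanshing-ext-over-deformation} tells us that either $\injdim_R(S)$ or $\projdim_R(\omega_S)$ is finite, and I would handle these two possibilities separately.

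Case 1: $\injdim_R(S) < \infty$. I would argue by descending induction on $j$ that $\injdim_R(R/(f_1,\ldots,f_j)) < \infty$ implies $\injdim_R(R/(f_1,\ldots,f_{j-1})) < \infty$. Writing $N := R/(f_1,\ldots,f_{j-1})$, the element $f_j$ is a nonzerodivisor on $N$ because $f_1,\ldots,f_c$ is $R$-regular, so there is a short exact sequence $0 \to N \xrightarrow{f_j} N \to N/f_jN \to 0$; applying $\Hom_R(k,-)$ and using that $f_j \in \fm$ acts as zero on each $\Ext_R^i(k,N)$ gives short exact sequences $0 \to \Ext_R^i(k,N) \to \Ext_R^i(k,N/f_jN) \to \Ext_R^{i+1}(k,N) \to 0$, so that $\Ext_R^i(k,N/f_jN) = 0$ for $i \gg 0$ forces $\Ext_R^i(k,N) = 0$ for $i \gg 0$, i.e.\ $\injdim_R(N) < \infty$. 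Iterating down to $j = 0$ yields $\injdim_R(R) < \infty$, so $R$ is Gorenstein.

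Case 2: $\projdim_R(\omega_S) < \infty$. Since $\omega_R$ is MCM, the $R$-regular sequence $f_1,\ldots,f_c$ is also $\omega_R$-regular, and the dual argument applies: with $L := \omega_R/(f_1,\ldots,f_{j-1})\omega_R$ there is a short exact sequence $0 \to L \xrightarrow{f_j} L \to L/f_jL \to 0$, and applying $k \otimes_R -$ together with the fact that $f_j$ acts as zero on each $\Tor_i^R(k,L)$ yields $0 \to \Tor_i^R(k,L) \to \Tor_i^R(k,L/f_jL) \to \Tor_{i-1}^R(k,L) \to 0$; hence $\projdim_R(L/f_jL) < \infty$ forces $\projdim_R(L) < \infty$. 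Descending to $j = 0$ gives $\projdim_R(\omega_R) < \infty$, and since $\omega_R$ is MCM the Auslander-Buchsbaum formula forces $\omega_R$ to be free, so $\omega_R \cong R$ and $R$ is Gorenstein (the well-known fact already used in the proof of Corollary~\ref{cor:Tachikawa-conj-min-mult}).

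Thus in either case $R$ is Gorenstein, and therefore so is $S$. I do not foresee a real obstacle: the content is entirely carried by Theorem~\ref{thm:vanshing-ext-over-deformation}, while the two descents use only the elementary facts that an element of $\fm$ annihilates $\Ext_R^i(k,-)$ and $\Tor_i^R(k,-)$ and that injective (resp.\ projective) dimension of a finitely generated module is detected by the vanishing of these modules for $i \gg 0$; the one point needing mild care is keeping the induction on $c$ honest, in particular checking that $f_j$ remains a nonzerodivisor on the module in question at each step, which is immediate. Alternatively, one can collapse the two cases: in Case~1 one also has $\projdim_R(S) = c < \infty$, and in Case~2 one has $\injdim_R(\omega_S) < \infty$ by Proposition~\ref{prop:basic-facts-can-mod}(iii), so in each case $R$ admits a nonzero finitely generated module of finite projective and finite injective dimension, which is known to force $R$ to be Gorenstein.
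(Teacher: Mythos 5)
Your proposal is correct and follows essentially the same route as the paper: apply Theorem~\ref{thm:vanshing-ext-over-deformation} to the MCM $S$-modules $\omega_S$ and $S$, then transfer the resulting finiteness of $\projdim_R(\omega_S)$ or $\injdim_R(S)$ up the regular sequence to conclude $\projdim_R(\omega_R)<\infty$ or $\injdim_R(R)<\infty$, so that $R$, and hence $S$, is Gorenstein. Your two explicit descents (via the vanishing of $\Ext_R^i(k,-)$ and $\Tor_i^R(k,-)$ together with the socle-killing action of $f_j\in\fm$) are just a hands-on proof of the paper's Lemma~\ref{rach}, which the paper simply quotes as well known, and your alternative finish via a module of finite projective and injective dimension is also fine but not needed.
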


\begin{proof}
 By virtue of Theorem~\ref{thm:vanshing-ext-over-deformation}, either $\projdim_R(\omega_S) $ or $ \injdim_R(S) $ is finite. Therefore, in view of Lemma~\ref{rach}, we get that either $\projdim_R(\omega_R) $ or $\injdim_R(R)$ is finite. In both cases, $R$ is Gorenstein, and hence $ S $ is Gorenstein.
\end{proof}

The following results are well-known and easy to prove.

\begin{lemma}\label{rach}
 Let $ A $ be a local ring, and $M$ be an $A$-module. Let $ x \in A $ be an $ M $-regular element. Then the following statements hold true.
 \begin{enumerate}[\rm (i)]
 	\item $\injdim_A(M)$ is finite if and only if $\injdim_A(M/xM)$ is finite.
 	\item $\projdim_A(M)$ is finite if and only if $\projdim_A(M/xM)$ is finite.
 \end{enumerate}
\end{lemma}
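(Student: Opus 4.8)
The plan is to reduce both statements to the standard homological characterizations of finiteness of projective and injective dimension over a Noetherian local ring $(A,\fm,k)$: for a finitely generated $A$-module $L$ one has $\projdim_A(L) < \infty$ \iff $\Tor^A_i(L,k) = 0$ for all $i \gg 0$, and $\injdim_A(L) < \infty$ \iff $\Ext^i_A(k,L) = 0$ for all $i \gg 0$; the only nontrivial half among these is the implication for injective dimension, which is exactly Lemma~\ref{lem:finite-injdim}. Since $x$ is an $M$-regular element of the local ring $A$ (so in particular $x \in \fm$), there is a short exact sequence
\[
 0 \lra M \stackrel{x}{\lra} M \lra M/xM \lra 0 .
\]

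Next I would apply $\Ext^\bullet_A(k,-)$ and $\Tor_\bullet^A(-,k)$ to this sequence. The key observation is that the connecting homomorphisms in the resulting long exact sequences are, up to sign, multiplication by $x$ on $\Ext^i_A(k,M)$ and on $\Tor_i^A(M,k)$; as these are $k$-vector spaces and $x \in \fm$ annihilates $k$, those maps are zero. Hence the long exact sequences break into short exact sequences
\[
 0 \lra \Ext^i_A(k,M) \lra \Ext^i_A(k,M/xM) \lra \Ext^{i+1}_A(k,M) \lra 0
\]
and
\[
 0 \lra \Tor_i^A(M,k) \lra \Tor_i^A(M/xM,k) \lra \Tor_{i-1}^A(M,k) \lra 0
\]
for every $i$.

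From the first family, $\Ext^i_A(k,M)$ embeds in $\Ext^i_A(k,M/xM)$ while $\Ext^{i+1}_A(k,M)$ is a quotient of $\Ext^i_A(k,M/xM)$; therefore $\Ext^i_A(k,M) = 0$ for $i \gg 0$ \iff $\Ext^i_A(k,M/xM) = 0$ for $i \gg 0$, and part (i) follows from the characterization of finite injective dimension recalled above. Part (ii) is proved in the same way, using the second family of short exact sequences together with the characterization of finite projective dimension. There is no real obstacle here — the argument is soft — the only point deserving a word of care being the equivalence ``$\injdim_A(L) < \infty$ if and only if $\Ext^i_A(k,L) = 0$ for $i \gg 0$'': its forward direction is immediate from a finite injective resolution, and its converse is precisely Lemma~\ref{lem:finite-injdim}, so nothing beyond what the paper already contains is needed.
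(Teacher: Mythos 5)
Your proposal is correct, and in fact it supplies more than the paper does: the paper states Lemma~\ref{rach} with no argument at all (``well-known and easy to prove''), so there is no proof to compare against, and your reduction to the vanishing of $\Ext^i_A(k,-)$ and $\Tor^A_i(-,k)$ for $i \gg 0$, using the short exact sequence $0 \to M \xrightarrow{\,x\,} M \to M/xM \to 0$ and citing Lemma~\ref{lem:finite-injdim} for the nontrivial direction of the injective-dimension criterion, is a standard and complete way to do it. Two small points of hygiene. First, the maps that vanish in the long exact sequences are not the connecting homomorphisms but the maps induced by multiplication by $x$ on $M$, namely $\Ext^i_A(k,M) \to \Ext^i_A(k,M)$ and $\Tor^A_i(M,k) \to \Tor^A_i(M,k)$; these are multiplication by $x$ on $k$-vector spaces and hence zero, and it is their vanishing (the connecting maps then being forced to be surjective, the other maps injective) that yields exactly the short exact sequences you display, so the argument is unaffected, but the labelling should be fixed. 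Second, the statement needs $x$ to be a nonunit (equivalently $xM \neq M$): if $x$ were a unit then $M/xM = 0$ would trivially have finite homological dimensions with no consequence for $M$. This is automatic under the convention (as in Bruns--Herzog) that an $M$-regular element satisfies $M \neq xM$, and in the paper's application $x$ lies in the maximal ideal, so your parenthetical ``so in particular $x \in \fm$'' is justified once that convention is made explicit.
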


As another application, we obtain the following:

\begin{theorem}\label{app2}
	Along with {\rm Hypothesis~\ref{hyp:deformation}}, assume that $ M $ and $ N $ are non-zero homomorphic images of finite direct sums of syzygy modules of $ k $ over $ S $. {\rm (}Possibly, $ M = N ${\rm )}. If $\Tor^S_i(M, N) = 0$ for all $i \gg 0$, then $S$ is regular.
\end{theorem}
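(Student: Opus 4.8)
The plan is to reduce, via Corollary~\ref{cor:vanshing-tor-over-deformation}, to a statement about $R$-modules, apply the regularity criterion over $R$ (Proposition~\ref{prop:min-mult-charc-reg-hom-image}, which rests on Martsinkovsky's \cite[Proposition~7]{Mar96}), and then dispose of the ``genuine deformation'' part using support varieties. Throughout we may assume $k$ is infinite.

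First, since $\Tor_i^S(M,N)=0$ for all $i\gg0$, it vanishes in particular for $(d+c+1)$ consecutive values of $i\ge i_0+c+2$ in the notation of Corollary~\ref{cor:vanshing-tor-over-deformation}, so that corollary gives that $\projdim_R(M)$ or $\projdim_R(N)$ is finite; say $\projdim_R(M)<\infty$. The next step is a transfer lemma: a nonzero homomorphic image of a finite direct sum of syzygy modules of $k$ over $S$ is again a nonzero homomorphic image of a finite direct sum of syzygy modules of $k$ over $R$. It suffices to treat one regular element at a time, and further to show that for $\overline R=R/(f)$ with $f\in\fm$ an $R$-regular element each $\Omega_n^{\overline R}(k)$ is a homomorphic image of a finite direct sum of syzygy modules of $k$ over $R$. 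For $n\le1$ this is clear, as $\Omega_1^{\overline R}(k)=\fm/(f)$ is a quotient of $\fm=\Omega_1^R(k)$; when $f\notin\fm^2$, Takahashi's isomorphism $\Omega_n^R(k)\otimes_R\overline R\cong\Omega_n^{\overline R}(k)\oplus\Omega_{n-1}^{\overline R}(k)$ from \cite[Corollary~5.3]{Tak06} exhibits $\Omega_n^{\overline R}(k)$ as a direct summand of a quotient of $\Omega_n^R(k)$; the remaining case I would handle by induction on $n$ using the short exact sequences $0\to\Omega_{n+1}^{\overline R}(k)\to\overline R^{\,\beta}\to\Omega_n^{\overline R}(k)\to0$ together with their pullbacks over $R$. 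Granting this, $M$ is a nonzero homomorphic image of a finite direct sum of syzygy modules of $k$ over $R$ with $\projdim_R(M)<\infty$, so \cite[Proposition~7]{Mar96} yields that $R$ is regular; hence $S=R/(f_1,\dots,f_c)$ is a complete intersection.

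It remains to rule out $S$ being a non-regular complete intersection, and here $N$ must enter (it genuinely cannot be avoided: for $R=k[[x,y]]$, $S=R/(xy)$ and $M=\fm/(xy)$ one has $\projdim_R(M)<\infty$ while $S$ is not regular). After subtracting suitable combinations we may assume $f_1,\dots,f_j$ are linearly independent modulo $\fm^2$ and $f_{j+1},\dots,f_c\in\fm^2$, so $S$ is regular iff $j=c$. Suppose $e:=c-j\ge1$. Since $\projdim_R(M)<\infty$, the $S$-module $M$ has finite complete intersection dimension, so support varieties over the complete intersection $S$ are defined for $M$, and by the Avramov--Buchweitz theory $\Tor_i^S(M,N)=0$ for $i\gg0$ forces $V_S(M)\cap V_S(N)=\{0\}$ in $\mathbb{A}^e_k$. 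On the other hand $\projdim_S(M)=\infty=\projdim_S(N)$, since otherwise \cite[Proposition~7]{Mar96} applied over $S$ would make $S$ regular; moreover a nonzero homomorphic image of a finite direct sum of syzygy modules of $k$ over $S$ has maximal complexity $e$, i.e.\ $V_S(-)=\mathbb{A}^e_k$. Thus $V_S(M)\cap V_S(N)=\mathbb{A}^e_k\ne\{0\}$, a contradiction. Hence $j=c$ and $S$ is regular.

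I expect the principal obstacle to be the maximal-complexity assertion --- that a nonzero homomorphic image of a finite direct sum of syzygy modules of $k$ over a complete intersection has full support variety; the transfer lemma for a regular element lying inside $\fm^2$ is a secondary technical point. Should the maximal-complexity statement fail at that level of generality, an alternative is to induct on $c$, peeling off the $f_i$ outside $\fm^2$ via \cite[Corollary~5.3]{Tak06} and Lemma~\ref{lem:long-exact-seq-tor} and invoking Proposition~\ref{prop:min-mult-charc-reg-hom-image} in the base case, the remaining ``$f_i\in\fm^2$'' part still requiring a support-variety (or analogous) input.
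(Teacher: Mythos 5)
Your endgame (support varieties over the complete intersection $S$) is sound, but the first half contains a genuine gap: the ``transfer lemma'' you rely on is false, precisely in the case $f\in\fm^2$ that you defer to an inductive sketch. Take $R=k[[x,y]]/(xy)$, a one-dimensional CM local ring of minimal multiplicity, $f=x^2+y^2\in\fm^2$ (an $R$-regular element), and $S=R/(f)$. The syzygy modules of $k$ over $R$ are $k$, $\fm=(x,y)$, and $\Omega_n^R(k)\cong xR\oplus yR$ for $n\ge 2$; each of these is generated by elements annihilated by $x$ or by $y$. Hence the image of \emph{any} homomorphism from a finite direct sum of syzygies of $k$ over $R$ into an $R$-module $W$ lies in the submodule generated by $(0:_W x)\cup(0:_W y)$. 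Now let $W=\Omega_2^S(k)=\ker\bigl((x\;\,y)\colon S^2\to S\bigr)$, which is minimally generated by $u_1=(y,0)$, $u_2=(0,x)$, $u_3=(x,y)$ (the last using $x^2+y^2=0$ in $S$). A direct check gives $(0:_W x)=ku_1+\fm W$ and $(0:_W y)=ku_2+\fm W$, so the submodule they generate does not contain $u_3$ modulo $\fm W$. Thus $W$ is a (trivial) homomorphic image of a syzygy of $k$ over $S$ but is \emph{not} a homomorphic image of any finite direct sum of syzygies of $k$ over $R$; your proposed induction ``using the exact sequences and their pullbacks'' cannot repair this, because the statement it aims at fails already for $c=1$, $n=2$. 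Consequently the route ``transfer to $R$, then apply \cite[Proposition~7]{Mar96} over $R$'' does not go through.

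Ironically, the step you flagged as the principal obstacle is harmless: that a non-zero homomorphic image of a finite direct sum of syzygies of $k$ over $S$ has $\cx_S(-)=\cx_S(k)$ is exactly \cite[Corollary~9]{Avr96}, and it is the first line of the paper's proof. The paper bridges the part you tried to handle with the transfer lemma by complexity instead: from $\projdim_R(M)<\infty$ (via Corollary~\ref{cor:vanshing-tor-over-deformation}), Gulliksen's change-of-rings theorem \cite[Theorem~3.1]{Gul74} gives $\cx_S(M)<\infty$; since $\cx_S(M)=\cx_S(k)$, \cite[(2.3)]{Gul80} shows $S$ is a complete intersection, and then $\cx_S(M)=\cx_S(N)=\cx_S(k)=\codim(S)$ together with $\Tor^S_i(M,N)=0$ for $i\gg 0$ and \cite[9.3.9]{Avr98} forces $\codim(S)=0$. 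This avoids any reduction of the syzygy condition from $S$ to $R$ and any appeal to Martsinkovsky's result, and it subsumes your support-variety argument in the final step.
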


\begin{proof}
	Note that $ \cx_S(M) = \cx_S(N) = \cx_S(k) $ (due to \cite[Corollary~9]{Avr96}). In view of Corollary~\ref{cor:vanshing-tor-over-deformation}, either $ \projdim_R(M) $ or $ \projdim_R(N) $ is finite. Suppose $ \projdim_R(M) $ is finite. Hence, by virtue of \cite[Theorem~3.1]{Gul74}, we obtain that $ \cx_S(M) $ is finite. Therefore $ \cx_S(k) $ is finite, and hence $ S $ is a complete intersection ring (by \cite[(2.3)]{Gul80}). Suppose codimension of $S$ is $l$. Then $ \cx_S(M) = \cx_S(N) = \cx_S(k) = l $. Since $\Tor^S_i(M, N) = 0$ for all $i \gg 0$, in view of \cite[9.3.9]{Avr98}, it follows that $ l = 0 $, and hence $ S $ is regular.
\end{proof}

Our final application is the following:

\begin{theorem}\label{app3}
	Along with {\rm Hypothesis~\ref{hyp:deformation-app}}, let $ M $ be a non-zero homomorphic image of a finite direct sum of syzygy modules of $ k $ over $ S $. Then the following statements are equivalent:
	\begin{enumerate}[{\rm (i)}]
		\item $ S $ is Gorenstein.
		\item $ \Ext_S^i(M,S) = 0 $ for all $i \gg 0$.
		\item $ \Tor_i^S(M,\omega_S) = 0 $ for all $i \gg 0$.
	\end{enumerate}
\end{theorem}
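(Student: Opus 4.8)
The plan is to establish the implications (i)$\Rightarrow$(ii), (i)$\Rightarrow$(iii), (ii)$\Rightarrow$(i), and (iii)$\Rightarrow$(i) separately, after reducing at the outset to the case where $R$ (and hence $S$) is complete. The two forward implications are immediate and require no hypothesis on $M$: if $S$ is Gorenstein then $\omega_S \cong S$, which is free and has finite injective dimension $\dim(S)$, so $\Tor_i^S(M,\omega_S) \cong \Tor_i^S(M,S) = 0$ for all $i \ge 1$ and $\Ext_S^i(M,S) = 0$ for all $i > \dim(S)$.

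For (ii)$\Rightarrow$(i), I would apply Corollary~\ref{cor:vanshing-ext-over-deformation} with $N := S$; the vanishing of $\Ext_S^i(M,S)$ for $i \gg 0$ certainly supplies the required consecutive vanishing, so we conclude that either $\projdim_R(M)$ or $\injdim_R(S)$ is finite. If $\injdim_R(S)$ is finite, then iterating Lemma~\ref{rach}(i) along the $R$-regular sequence $f_1,\ldots,f_c$ (recall $S = R/(f_1,\ldots,f_c)$) yields $\injdim_R(R) < \infty$, so $R$, and hence $S$, is Gorenstein. If instead $\projdim_R(M)$ is finite, then---as in the proof of Theorem~\ref{app2}---$\projdim_R(M) < \infty$ forces $\cx_S(M) < \infty$ by \cite[Theorem~3.1]{Gul74}, while $\cx_S(M) = \cx_S(k)$ by \cite[Corollary~9]{Avr96} because $M$ is a homomorphic image of a finite direct sum of syzygies of $k$ over $S$; thus $\cx_S(k) < \infty$, so $S$ is a complete intersection by \cite[(2.3)]{Gul80}, and in particular Gorenstein.

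For (iii)$\Rightarrow$(i), I would run the parallel argument using Corollary~\ref{cor:vanshing-tor-over-deformation} with $N := \omega_S$ (note $\depth_S(\omega_S) = \dim(S)$), obtaining that either $\projdim_R(M)$ or $\projdim_R(\omega_S)$ is finite. In the latter case, iterating Lemma~\ref{rach}(ii) along $f_1,\ldots,f_c$---which is $\omega_R$-regular since $\omega_R$ is MCM over the CM ring $R$---gives $\projdim_R(\omega_R) < \infty$, and this classically forces $R$, hence $S$, to be Gorenstein; in the former case one argues exactly as in (ii)$\Rightarrow$(i) that $S$ is a complete intersection. I do not expect a serious obstacle here, since the substance is already carried by Corollaries~\ref{cor:vanshing-ext-over-deformation} and \ref{cor:vanshing-tor-over-deformation} together with the complexity criterion for complete intersections; the one step that genuinely needs the hypothesis on $M$ is ``$\projdim_R(M) < \infty \Rightarrow S$ is a complete intersection,'' which would fail without $\cx_S(M) = \cx_S(k)$, and the only bookkeeping to watch is that the $f_i$ remain regular on $\omega_R$ when descending via Lemma~\ref{rach}.
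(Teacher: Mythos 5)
Your proposal is correct and is essentially the paper's own argument: both forward implications are the same trivial observations, and the reverse implications use Corollaries~\ref{cor:vanshing-ext-over-deformation} and \ref{cor:vanshing-tor-over-deformation}, then split into the case $\projdim_R(M)<\infty$ (handled by the complexity argument of Theorem~\ref{app2} via \cite{Avr96}, \cite{Gul74}, \cite{Gul80}) and the case where $\injdim_R(S)$ resp.\ $\projdim_R(\omega_S)$ is finite, which Lemma~\ref{rach} transfers to $\injdim_R(R)$ resp.\ $\projdim_R(\omega_R)$, forcing $R$ and hence $S$ to be Gorenstein. The only cosmetic difference is your initial reduction to the complete case, which the paper does not need here since completeness is only invoked inside the cited corollaries.
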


\begin{proof}
	As in the proof of Theorem~\ref{app2}, we note that if $ \projdim_R(M) $ is finite, then $ S $ is a complete intersection ring, and hence $ S $ is Gorenstein.
	
	(ii) $ \Rightarrow $ (i): Suppose $ \Ext_S^i(M,S) = 0 $ for all $i \gg 0$. Then, by virtue of Corollary~\ref{cor:vanshing-ext-over-deformation}, either $ \projdim_R(M) $ or $ \injdim_R(S) $ is finite. If $ \projdim_R(M) $ is finite, then $ S $ is Gorenstein. In other case, i.e., if $ \injdim_R(S) $ is finite, then by Lemma~\ref{rach}, $ \injdim_R(R) $ is finite, i.e., $R$ is Gorenstein, and hence $ S $ is Gorenstein.
	
	(iii) $ \Rightarrow $ (i): Suppose $ \Tor_i^S(M,\omega_S) = 0 $ for all $i \gg 0$. 	Then, in view of Corollary~\ref{cor:vanshing-tor-over-deformation}, either $ \projdim_R(M) $ or $ \projdim_R(\omega_S) $ is finite. In either case, by a similar way as above, one obtains that $ S $ is Gorenstein.
\end{proof}

\section*{Acknowledgements}

The authors would like to thank Olgur Celikbas for his valuable comments on this article and particularly for making Remarks~\ref{olgur-1} and \ref{olgur-2}. The authors also thank the anonymous reviewer for his/her careful reading of the manuscript and many valuable comments and suggestions.

\end{document}